\setlist{noitemsep,topsep=0pt,parsep=0pt,partopsep=0pt}
\newcommand{\Rset}{\mathbb{R}}
\newcommand{\cS}{\mathcal{S}}
\newcommand{\cQ}{\mathcal{Q}}
\newcommand{\cN}{\mathcal{N}}
\newcommand{\cM}{\mathcal{M}}
\newcommand{\cR}{\mathcal{R}}
\newcommand{\cA}{\mathcal{A}}
\newcommand{\diag}{\operatorname{diag}}
\newcommand{\col}{\operatorname{col}}
\newcommand{\image}{\operatorname{Im}}
\newcommand{\kernel}{\operatorname{Ker}}
\newtheorem{theorem}{Theorem}[section]
\newtheorem{proposition}[theorem]{Proposition}
\newtheorem{lemma}[theorem]{Lemma}
\newtheorem{corollary}[theorem]{Corollary}
\theoremstyle{plain}
\newtheorem{remark}[theorem]{Remark}
\newtheorem{example}[theorem]{Example}
\title{
On Structured Lyapunov Functions and Dissipativity in Interconnected LTI Systems
}
\author{Andrej Joki\'{c} and Ivica Naki\'{c}
\thanks{A.\ Joki\'{c} is with the Faculty of Mechanical Engineering and Naval Architecture, University of Zagreb, Ivana Lu\v{c}i{\'c}a 5, 10000 Zagreb, Croatia, tel: +385921013815, e-mail: andrej.jokic@fsb.hr.}
\thanks{Ivica Naki\'{c} is with the Department of Mathematics,
        Faculty of Science, University of Zagreb,
        e-mail: nakic@math.hr.}%
\thanks{Correspondence address: A. Joki\'{c}, Faculty of Mechanical Engineering and Naval Architecture, University of Zagreb, Ivana Lu\v{c}i{\'c}a 5, 10000 Zagreb, Croatia.}
\thanks{This research has been supported by the Croatian Science Foundation under the projects 9354 ``Control of Spatially Distributed Systems'', 9345 ``MMACACMS'', IP-2016-06-2468 ``Control of Dynamical Systems''
, and by the European Regional Development Fund under the grant KK.01.1.1.01.0009 (DATACROSS).}
}
\begin{document}

\maketitle

\begin{abstract}
In this paper we study connections between structured storage or Lyapunov functions of a class of interconnected systems (dynamical networks) and dissipativity properties of the individual systems. We prove that if a dynamical network, composed as a set of linear time invariant (LTI) systems interconnected over an acyclic graph, admits an additive quadratic Lyapunov function, then the individual systems in the network are dissipative with respect to a (nonempty) set of interconnection neutral supply functions. Each supply function from this set is defined on a single interconnection link in the network. Specific characterizations of neutral supply functions are presented which imply robustness of network stability/dissiptivity to removal of interconnection links. 
\end{abstract}

\section{INTRODUCTION}
\label{Sec1}

In this paper we consider dissipative dynamical systems as introduced by Jan Willems in \cite{Willems_1}, \cite{Willems_2}. 
The dynamical system $\dot{x}=f(x,d)$, $z=g(x,d)$, is defined to be dissipative with respect to the supply function $s(\cdot,\cdot)$ if there exists a storage $V:X\rightarrow \Rset$ such that the dissipation inequality
\begin{equation}
\label{DssipativityInequality1}
V(x(t_1))\leq V(x(t_0))+ \int_{t_0}^{t_1} s(d(t),z(t))dt 
\end{equation}  
holds for all trajectories of $x, d, z$ satisfying the system's dynamics, for all $t\in [t_0,t_1]$ and for all $t_0<t_1$. 
The supply function quantifies the power supplied to (or extracted from) the system, while a storage function\footnote{In general definition we do not require a storage function to be non-negative, in accordance with the definition in \cite{Willems_Trentleman}. In the paper we explicitly indicate when some particular storage function is assumed/required to be non-negative.} quantifies the energy stored within the system at any given moment. The dissipation inequality implies that the difference in the stored energy over any finite time interval cannot exceed the amount of energy supplied to the system over the same time interval.  

Being an extension of Lyapunov theory to open systems, i.e., systems with exogenous inputs and outputs, with inflow/outflow of power, the dissipativity theory is one of the major tools in both \emph{i)} robust control theory (see, e.g., \cite{DullerudPaganini, SchererWeiland}), where many of the problems can be formulated, solved or interpreted in this framework; and \emph{ii)} stability analysis/control synthesis for large scale systems, see e.g. \cite{MoylanHill} for classical results and, e.g., \cite{DAndreaDullerud, Langbort} for some more recent results.

Loosely speaking, in robust control, supply functions are used to model the way uncertainty in the system processes power. In large scale systems, supply functions are used to capture the power exchanges between the subsystems. In both cases, the notion of interconnection neutral supply functions, introduced in \cite{Willems_1}, often plays a central role. 


%

One of the results from \cite{Willems_1} (Theorem 5 from \cite{Willems_1}) states that dissipative systems which are interconnected via a neutral interconnection constraint define a new dissipative system where the sum of storage functions of the individual subsystems is a storage function for the overall interconnected system. 
In this paper we are concerned with the converse statement: \emph{if a set of interconnected systems is disspative (stable) with a storage function (Lyapunov function) characterized by an additive structure\footnote{In this context we say that a function has an \emph{additive structure} if it is represented as a sum of local functions, where each local function is defined on a level of an individual system in a network.}, does then also necessarily exist a set of interconnection neutral supply functions with respect to which the individual systems in the network are dissipative?}


In this paper we consider linear time invariant (LTI) systems and quadratic supply and storage/Lyapunov functions. The main contributions are summarized as follows:
\begin{itemize}
\item In the case of two open interconnected LTI systems, existence of an additive storage function for an external supply rate with an additive structure implies existence of interconnection neutral supply functions. In the case of an autonomous interconnected LTI system, existence of a quadratic Lyapunov function with an additive structure implies existence of interconnection neutral supply functions.
\item Generalization of the above results to acyclic networks\footnote{Precise definition of what we mean by acyclic network is presented in Section~\ref{Sec4}. This definition allows for two adjacent systems ($G_i$, $G_j$) in the network to be interconnected in feedback loops (there are outputs from $G_i$ acting as inputs to $G_j$ and vice-versa), but places restrictions on having more general cycles (loops) in the interconnections.} of interconnected LTI systems.  
\item  Specific characterizations of neutral supply functions are presented which imply robustness of network stability/dissiptivity to a removal of interconnection links. Based on these results, we present sufficient conditions under which acyclic networks are robustly stable with respect to removal/disconnection of an interconnection link; as well as sufficient conditions under which networks with cycles are robustly stable with respect to removal/disconnection of a system.
\end{itemize}

To the best of our knowledge, and surprisingly, such (and similar) converse statements to \cite[Theorem~5]{Willems_1} have not been presented yet, with exception of our conference paper \cite{JokicNakic_MTNS} which reported preliminary results on this topic. In \cite{JokicNakic_MTNS} we have presented a proof of existence of neutral supply rates for two interconnected systems with no direct feed-through matrices (no algebraic loop) and indicated that the generalization to acyclic networks is possible, but with no detailed proof presented. In contrast, this paper contains results, with detailed proofs, for systems with algebraic loops, generalization to open systems and acyclic networks.  

The remainder of the paper is organized as follows. In Section~\ref{Sec2} we define the notions of dissipativity and interconnection neutral supply functions and present several results from \cite{Willems_1} to set the stage for the main results of the paper. The main results of the paper are presented in Sections~\ref{Sec3}, \ref{Sec4} and \ref{Sec5}. In Section~\ref{Sec3} we consider interconnection of two systems only. These results are further extended in Section~\ref{Sec4} to accommodate arbitrary acyclic networks. Characteristics of interconnection neutral supply rates and certain robustness properties of a network are mutually related in Section~\ref{Sec5}. A numerical example is presented in Section~\ref{Example}. To make the presentation more clear, most of the proofs are collected in a single section, Section~\ref{Sec6}. Conclusions are summarized in Section~\ref{Sec7}. There are two appendices included which contain either well known (Appendix~\ref{Appendix_A}) or novel isolated results (Appendix~\ref{Appendix_B}) which are used in the proofs in Section~\ref{Sec6}.

\section{PRELIMINARIES}
\label{Sec2}


\subsection{Notation and terminology}
Let $\Rset$ denote the field of real numbers and let $\Rset^{m\times n}$ denote $m$ by $n$ matrices with elements in $\Rset$. $I_n$ is the identity matrix with dimension $n$. Index $n$ will be omitted when the dimension is clear from the context.
The transpose of a matrix $A$ is denoted by $A^{\top}$. 
$\kernel{A}$ and $\image{A}$ are used to denote the kernel and the image space of $A$, respectively.
The operator $\col(\cdot,\ldots,\cdot)$ stacks its operands into a column vector.
For a set of (not necessarily square) matrices $\{M_1, M_2, \ldots, M_n\}$ we use $\diag(M_1,\ldots,M_n)$ to denote the matrix \begin{footnotesize}$\begin{pmatrix} M_1 & 0 & \cdots & 0 \\ 0 & M_2 & \cdots & 0 \\ \vdots & \vdots & \ddots & \vdots \\ 0 & 0 & \cdots & M_n \end{pmatrix}$. \end{footnotesize}
The matrix inequalities $A\succ B$ ($A \prec B$) and $A\succeq B$ ($A \preceq B$) mean $A$ and $B$ are symmetric and $A-B$ is positive definite (negative definite) and positive semi-definite (negative semi-definite), respectively.
For a transfer matrix $G$ with realization $G(s)=C(sI-A)^{-1}B+D$ we write $G=\left[\begin{array}{c | c} A & B \\ \hline C & D \end{array} \right]$. 
Blocks in matrices that can be inferred by symmetry are sometimes denoted by $\star$ to save space.
For a finite set $\Omega$ we use $|\Omega|$ to denote its cardinality.

Throughout the paper, when we refer to stability of a system, we will mean \emph{asymptotic stability}. The term \emph{stable} should be interpreted in that way.     

\subsection{Dissipativity of LTI systems with quadratic supply functions}
\label{Sec2_B}
Here we recall characterization of dissipativity for linear time invariant (LTI) systems in terms of linear matrix inequalities (LMI). For more details we refer to, e.g., \cite{SchererWeiland}. 

The dynamical system  $\dot{x}=f(x,d)$, $z=g(x,d)$ is said to be strictly dissipative if there exists $\epsilon > 0$ so that the dissipation inequality \eqref{DssipativityInequality1} holds when $s(d(t),z(t))$ is replaced with  $s(d(t),z(t))-\epsilon \|d(t)\|^2$. 
An LTI system $G$, given by
\begin{equation}
\label{LTI_G}
G: \quad \begin{pmatrix}\dot{x} \\ z \end{pmatrix} 
=
\begin{pmatrix} A & B \\ C & D \end{pmatrix}
\begin{pmatrix} x \\ d \end{pmatrix},
\end{equation}
is dissipative with respect to the quadratic supply function
\begin{equation}
\label{QuadraticSupply}
s(d,z)=\left(\begin{matrix}
  d\\
  z\\
\end{matrix}\right)^{\top}
\left(\begin{matrix}
  Q & S\\
  S^{\top} & R\\
\end{matrix}\right)
\left(\begin{matrix}
  d\\
  z\\
\end{matrix}\right), 
\end{equation}
where $Q$ and $R$ are symmetric matrices of appropriate dimensions and $S$ is a real matrix, if there exists a quadratic storage function 
\begin{equation}
\label{QuadraticStorage}
V(x)=x^\top P x,
\end{equation}
such that the time derivative of $V(x(t))$ along the system's trajectories satisfies the \emph{differential dissipation inequality}
\begin{equation}
\label{DifferentialDissipativity}
\dot{V}(x(t)) \leq s(d(t),z(t))
\end{equation}
at any time $t$ and for all $(x,d,z)$ related via \eqref{LTI_G}. The system \eqref{LTI_G} is strictly dissipative with quadratic supply and quadratic storage if for all $\col(x(t),d(t))\neq 0$ the inequality \eqref{DifferentialDissipativity} holds with $\leq$ replaced by $<$.   
The differential strict dissipativity condition is equivalent to the existence of a symmetric $P$ such that the following linear matrix inequality (LMI) is feasible

\begin{equation}
\label{DisLMI}
\left(\begin{array}{cc} I & 0 \\ A & B \\ \hdashline 0 & I \\ C & D \end{array}\right)^\top
\left(\begin{array}{cc:cc} 0 & P & 0 & 0 \\ 
P & 0 & 0 & 0 \\ \hdashline 
0 & 0 & -Q & -S \\ 
0 & 0 & -S^\top & -R \end{array}\right)
\left(\begin{array}{cc} I & 0 \\ A & B \\ \hdashline 0 & I \\ C & D \end{array}\right) \prec 0. 
\end{equation}

Note that the we do not assume that the pair $(A,B)$ is controllable. Since in this paper we will be exclusively dealing with \emph{strict} dissipativity, no corresponding controllability assumptions are needed for the results presented in the paper, see e.g. Chapter~2 in \cite{SchererWeiland} for details. 


Several well-known results regarding robust dissipativity and robust stability are further presented in Appendix~\ref{Appendix_A}, as they are not required for presentation of the main results of the paper (Sections~\ref{Sec3}, \ref{Sec4} and \ref{Sec5}), but are however used in the proofs of these results (Section~\ref{Sec6}).  



\subsection{Interconnection neutral supply rates}
\label{Sec2_C}
Consider two LTI systems $G_i$, $i=1,2$, with inputs $(v_i,d_i)$, outputs $(w_i,z_i)$ and state vectors $x_i$, as presented in Figure~\ref{NeutralSupply}. Suppose the systems are interconnected with the interconnection constraint $v_1=w_2$ and $v_2=w_1$.

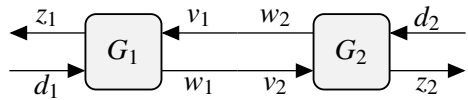
\begin{figure}[h!]
\centering
 \begin{tikzpicture}[scale=1]
 
\draw [line width=0.7pt, rounded corners] (1.5,1) rectangle (2.5,2);
\draw [line width=0.7pt, rounded corners] (4.5,1) rectangle (5.5,2);

\draw [line width=0.5pt, ->, >= triangle 45, rounded corners] (1.5,1.75) -- (0.5,1.75);
\draw [line width=0.5pt, ->, >= triangle 45, rounded corners] (0.5,1.25) -- (1.5,1.25);

\draw [line width=0.5pt, ->, >= triangle 45, rounded corners] (3.5,1.75) -- (2.5,1.75);
\draw [line width=0.5pt,  rounded corners] (2.5,1.25) -- (3.5,1.25);

\draw [line width=0.5pt,  rounded corners] (4.5,1.75) -- (3.5,1.75);
\draw [line width=0.5pt, ->, >= triangle 45, rounded corners] (3.5,1.25) -- (4.5,1.25);

\draw [line width=0.5pt, ->, >= triangle 45, rounded corners] (6.5,1.75) -- (5.5,1.75);
\draw [line width=0.5pt, ->, >= triangle 45, rounded corners] (5.5,1.25) -- (6.5,1.25);


\node at (1,1.95) {$z_1$};
\node at (1,1.05) {$d_1$};
\node at (2,1.5) {$G_1$};
\node at (3,1.95) {$v_1$};
\node at (3,1.05) {$w_1$};

\node at (4,1.95) {$w_2$};
\node at (4,1.05) {$v_2$};
\node at (5,1.5) {$G_2$};
\node at (6,1.95) {$d_{2}$};
\node at (6,1.05) {$z_{2}$};



\begin{pgfonlayer}{background}
\filldraw [line width=0.1mm,rounded corners,black!5] (1.5,1) rectangle (2.5,2);
\end{pgfonlayer}

\begin{pgfonlayer}{background}
\filldraw [line width=0.1mm,rounded corners,black!5] (4.5,1) rectangle (5.5,2);
\end{pgfonlayer}

 \end{tikzpicture}
\caption{Open systems with external and interconnection signals.}
\label{NeutralSupply}
\end{figure}
 
Note that throughout the paper we will use symbols $v$ and $w$ (possibly with indexes, e.g., $v_i$, $w_i$) to refer  respectively to input and outputs of a system, which are used to form interconnections with other systems. These signals therefore become internal signals for the connected system, when the interconnections are made.     
On the other side, we will use symbols $d$ and $z$ (possibly with indices, e.g., $d_i$, $z_i$) to refer respectively to exogenous inputs and outputs (exogenous with respect to the set of interconnected systems, that is, with respect to a network).

For $i=1,2$, let the system $G_i$ be dissipative with respect to supply function $s_i(v_i,d_i,w_i,z_i)$ with a storage function $V_i(x_i)$ and suppose that the supply functions have the following additive structure
\begin{equation}
\label{Q1}
s_i(v_i,d_i,w_i,z_i)=s_{i,ext}(d_i,z_i)+s_{i,int}(v_i,w_i), \quad \quad i=1,2.
\end{equation}
The interconnection is said to be \emph{neutral} with respect to the supply functions $s_{1,int}$, $s_{2,int}$ if
\begin{equation}
\label{Q2}
s_{1,int}(v_1,w_1)+s_{2,int}(v_2,w_2)=0,
\end{equation}
for all $v_1$, $w_1$, $v_2$, $w_2$ such that $v_1=w_2$, $v_2=w_1$. We will use the term \emph{interconnection neutral supply functions} to refer to supply functions $s_{1,int}(v_1,w_1)$ and $s_{2,int}(v_2,w_2)$ which satisfy the above property. 

We will use the symbol $G$ to refer to the system obtained by interconnecting $G_1$ and $G_2$.
The following proposition originates from \cite{Willems_1}.  
\begin{proposition}
\label{Prop1}
Let $G_1$ and $G_2$ be strictly dissipative with respect to $s_1(v_1,d_1,w_1,z_1)$ and $s_2(v_2,d_2,w_2,z_2)$, which both have an additive structure as in \eqref{Q1}. Let some corresponding storage functions be $V_1(x_1)$ and $V_2(x_2)$. Suppose the interconnected system $G$ is well-posed and $s_{1,int}(v_1,w_1)$ and $s_{2,int}(v_2,w_2)$ are interconnection neutral supply functions. Then the system $G$ is strictly dissipative with respect to the supply $s_{ext}(d_1,d_2,z_1,z_2):=s_{1,ext}(d_1,z_1)+s_{2,ext}(d_2,z_2)$.      
\end{proposition}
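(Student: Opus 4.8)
The plan is to take the sum of the two local storage functions, $V(x) = V_1(x_1) + V_2(x_2)$ with $x = \col(x_1, x_2)$, as a candidate storage function for the interconnected system $G$, and to obtain the differential dissipation inequality for $G$ by adding the two local differential dissipation inequalities and invoking the neutrality condition \eqref{Q2} to cancel the internal supply terms. Well-posedness of $G$ enters first to guarantee that $G$ is a genuine LTI system in the sense of \eqref{LTI_G}: for every value of $\col(x_1, x_2, d_1, d_2)$ the internal signals $v_1, w_1, v_2, w_2$, constrained by $v_1 = w_2$ and $v_2 = w_1$, are uniquely determined, so that the trajectories of $G$ along which \eqref{DifferentialDissipativity} must be verified are well defined.

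First I would fix an arbitrary trajectory of $G$ and read off the signals $(x_i, v_i, d_i, w_i, z_i)$ associated with each subsystem. Since the interconnection merely prescribes the values of the inputs $v_i$, each such collection is itself a trajectory of the open system $G_i$. Strict dissipativity of $G_i$ with storage $V_i$ and supply $s_i$, together with the additive structure \eqref{Q1}, then yields
\[
\dot V_i(x_i) \le s_{i,ext}(d_i, z_i) + s_{i,int}(v_i, w_i), \qquad i = 1, 2,
\]
with strict inequality whenever $\col(x_i, v_i, d_i) \neq 0$. Adding the two inequalities and using $\dot V(x) = \dot V_1(x_1) + \dot V_2(x_2)$, the internal terms combine into $s_{1,int}(v_1, w_1) + s_{2,int}(v_2, w_2)$, which vanishes by neutrality \eqref{Q2} because $v_1 = w_2$ and $v_2 = w_1$ hold along every trajectory of $G$. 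What remains is precisely $\dot V(x) \le s_{1,ext}(d_1, z_1) + s_{2,ext}(d_2, z_2) = s_{ext}(d_1, d_2, z_1, z_2)$, i.e.\ \eqref{DifferentialDissipativity} for $G$ with the asserted external supply.

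The step I expect to require the most care, and essentially the only obstacle, is recovering \emph{strictness} for $G$, namely that the inequality is strict for every $\col(x, d) = \col(x_1, x_2, d_1, d_2) \neq 0$. If this vector is nonzero, then $\col(x_1, d_1) \neq 0$ or $\col(x_2, d_2) \neq 0$; suppose the former. Then $\col(x_1, v_1, d_1) \neq 0$ irrespective of the value of $v_1$ forced by the interconnection, so the inequality for $G_1$ is strict, while the inequality for $G_2$ holds at least non-strictly, and hence their sum is strict. By symmetry the same conclusion holds in the remaining case, so strictness is retained in all cases. The rest is the remark that, because $V$ and $s_{ext}$ are quadratic and $G$ is LTI, this pointwise differential statement is exactly the strict dissipativity of $G$ claimed in the proposition.
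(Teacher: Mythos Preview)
Your argument is correct and follows exactly the paper's own proof: add the two strict dissipation inequalities, use the neutrality condition to cancel the internal supply terms, and conclude that $V(x_1,x_2)=V_1(x_1)+V_2(x_2)$ is a storage function for $G$ with respect to $s_{ext}$. Your treatment is in fact more careful than the paper's one-line proof, since you explicitly justify why strictness survives the summation.
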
 
\begin{proof}Adding the two dissipation inequalities $\dot{V}_1(x_1) < s_1(v_1,d_1,w_1,z_1)$ and $\dot{V}_2(x_2) < s_2(v_2,d_2,w_2,z_2)$, the desired result follows directly from the neutrality condition \eqref{Q2}, as it turns that $V(x_1,x_2)=V_1(x_1)+V_2(x_2)$ acts as a storage function for $G$ with respect to $s_{ext}(d_1,d_2,z_1,z_2)$.
\end{proof}
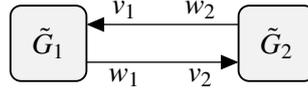
\begin{figure}[h!]
\centering
 \begin{tikzpicture}[scale=1]
 \draw [line width=0.7pt, rounded corners] (1.5,1) rectangle (2.5,2);
\draw [line width=0.7pt, rounded corners] (4.5,1) rectangle (5.5,2);
\draw [line width=0.5pt, ->, >= triangle 45, rounded corners] (3.5,1.75) -- (2.5,1.75);
\draw [line width=0.5pt, rounded corners] (2.5,1.25) -- (3.5,1.25);
\draw [line width=0.5pt, rounded corners] (4.5,1.75) -- (3.5,1.75);
\draw [line width=0.5pt, ->, >= triangle 45, rounded corners] (3.5,1.25) -- (4.5,1.25);
\node at (2,1.5) {$\tilde{G}_1$};
\node at (3,1.95) {$v_1$};
\node at (3,1.05) {$w_1$};
\node at (4,1.95) {$w_2$};
\node at (4,1.05) {$v_2$};
\node at (5,1.5) {$\tilde{G}_2$};
\begin{pgfonlayer}{background}
\filldraw [line width=0.1mm,rounded corners,black!5] (1.5,1) rectangle (2.5,2);
\end{pgfonlayer}
\begin{pgfonlayer}{background}
\filldraw [line width=0.1mm,rounded corners,black!5] (4.5,1) rectangle (5.5,2);
\end{pgfonlayer}
\end{tikzpicture}
\caption{Interconnected system as an autonomous system.}
\label{StructuredClosed}
\end{figure}

Let us now consider an autonomous system $\tilde{G}$ which consists of two interconnected systems $\tilde{G}_1$ and $\tilde{G}_2$, as presented in Figure~\ref{StructuredClosed}. Let $x_1$ and $x_2$ denote state vectors of $\tilde{G}_1$ and $\tilde{G}_2$, respectively.  

\begin{proposition}
\label{Prop2}
Let $\tilde{G}_1$ and $\tilde{G}_2$ be \emph{strictly} dissipative with respect to $s_{1,int}(v_1,w_1)$ and $s_{2,int}(v_2,w_2)$ with some corresponding storage functions $V_1(x_1)$ and $V_2(x_2)$, respectively. Suppose $V_1(\cdot)$ and $V_2(\cdot)$ are positive definite functions, the interconnected system $\tilde{G}$ is well-posed and that $s_{1,int}(v_1,w_1)$ and $s_{2,int}(v_2,w_2)$ are interconnection neutral supply rates. Then the system $\tilde{G}$ is stable. 
\end{proposition}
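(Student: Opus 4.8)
The plan is to show that $V(x_1,x_2) := V_1(x_1)+V_2(x_2)$ is a strict Lyapunov function for the autonomous interconnection $\tilde G$. Since $V_1$ and $V_2$ are positive definite, $V$ is a positive definite (and radially unbounded) quadratic form on the joint state space, so the only thing left to verify is that its derivative along the trajectories of $\tilde G$ is negative for every nonzero state.

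First I would record the consequences of strict dissipativity. Because $\tilde G_i$ has only the interconnection input $v_i$ and output $w_i$, the quantity $\dot V_i(x_i) - s_{i,int}(v_i,w_i)$ is a homogeneous quadratic form in $\col(x_i,v_i)$ (with $w_i$ eliminated through the output equation of $\tilde G_i$); strict dissipativity says precisely that this form is negative definite. Hence $\dot V_i(x_i) \le s_{i,int}(v_i,w_i)$ for all admissible $(x_i,v_i)$, with strict inequality whenever $\col(x_i,v_i)\neq 0$. Well-posedness of $\tilde G$ guarantees that along any trajectory the internal signals $v_1,w_1,v_2,w_2$ are uniquely determined linear functions of the joint state, so these two pointwise inequalities may be added and, using the interconnection constraint $v_1=w_2$, $v_2=w_1$ together with neutrality \eqref{Q2}, the supply terms cancel: $\dot V_1(x_1)+\dot V_2(x_2) \le s_{1,int}(v_1,w_1)+s_{2,int}(v_2,w_2)=0$.

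The main obstacle is upgrading this to a \emph{strict} inequality for every nonzero joint state $\col(x_1,x_2)$, since one of the two individual inequalities can fail to be strict (exactly when $\col(x_i,v_i)=0$, in which case both $\dot V_i$ and $s_{i,int}$ vanish and the bound is only an equality). I would dispose of this as follows: if $\col(x_1,x_2)\neq 0$ then $x_1\neq 0$ or $x_2\neq 0$, and $x_i\neq 0$ already forces $\col(x_i,v_i)\neq 0$; therefore at least one of the two dissipation inequalities is strict, and their sum is strictly negative. Consequently $\dot V(x_1,x_2)<0$ for all $\col(x_1,x_2)\neq 0$. Writing the interconnection as $\col(\dot x_1,\dot x_2)=\mathcal A\,\col(x_1,x_2)$, this is exactly the statement that $\mathcal A^\top P + P \mathcal A \prec 0$ for the positive definite $P$ defining $V$; by the Lyapunov theorem for LTI systems, $\tilde G$ is asymptotically stable, which is the claimed stability.
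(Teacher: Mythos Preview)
Your proof is correct and follows essentially the same route as the paper: sum the two strict dissipation inequalities, invoke neutrality to cancel the supply terms, and conclude that $V=V_1+V_2$ is a Lyapunov function. If anything, you are more explicit than the paper about why strictness survives the summation (namely, that $x_i\neq 0$ alone forces $\col(x_i,v_i)\neq 0$, so at least one of the two inequalities is strict).
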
 
\begin{proof} Summing the strict dissipation inequalities $\dot{V}_1(x_1)<s_{1,int}(v_1,w_1)$ and $\dot{V}_2(x_2)<s_{2,int}(v_2,w_2)$, which hold for $\col(x_1,v_1)\neq 0$ and $\col(x_2,v_2)\neq 0$, with \eqref{Q2}, we obtain $\dot{V}_1(x_1)+\dot{V}_2(x_2)<0$ for $x_1\neq 0$ and/or $x_2 \neq 0$. Therefore the positive definite function $V(x_1,x_2):=V_1(x_1)+V_2(x_2)$ is a Lyapunov function for the interconnected system $G$.
\end{proof}
\medskip

Extension of Propositions~\ref{Prop1} and \ref{Prop2} to a larger number of interconnected systems is straightforward, see \cite{Willems_1}.

\section{TWO INTERCONNECTED SYSTEMS}
\label{Sec3}
In this section we present the main results of the paper for the case of two interconnected systems. We prove suitably defined converse statements to those of Proposition~\ref{Prop1} and Proposition~\ref{Prop2}. While Propositions~\ref{Prop1} and \ref{Prop2} hold for general dissipative systems with general supply and storage functions, in this section, and in the remainder of the paper, we consider LTI systems with quadratic supply functions, i.e., supply functions in the form of \eqref{QuadraticSupply} and quadratic storage functions, i.e., storage functions in the form of \eqref{QuadraticStorage}.     

\subsection{Interconnection with an algebraic loop}
\label{Sec3_A}

Consider two interconnected systems $G_1$ and $G_2$, as depicted in Figure~\ref{StructuredOpen1_53}, where
\begin{equation}
\label{Systems_53}
G_i: \,\,\,
\begin{pmatrix} \dot{x}_i \\ w_i \\ z_i \end{pmatrix}=
\begin{pmatrix} A_i & B_i & E_i \\
C_i & D_i & 0 \\
F_i & K_i & L_i \end{pmatrix}
\begin{pmatrix} x_i \\ v_i \\ d_i \end{pmatrix}, 
\quad i=1,2,
\end{equation}
where $A_1 \in \Rset^{n_1 \times n_1}$, $D_1\in \Rset^{n_{w1}\times n_{v1}}$, $L_1 \in \Rset^{n_{z1}\times n_{d1}}$, $A_2 \in \Rset^{n_2 \times n_2}$, $D_2\in \Rset^{n_{w2}\times n_{v2}}$, $L_2 \in \Rset^{n_{z2}\times n_{d2}}$. 
With $n_{v1}=n_{w2}$, $n_{v2}=n_{w1}$, the systems are interconnected so that
\begin{equation}
\label{Interconnection_53}
v_1=w_2, \quad \quad v_2=w_1.
\end{equation}
Note that the existence of nonzero matrices $D_1$ and $D_2$ implies that there is a direct feed-through of the signal $v_1$ to the signal $w_2$ (see Figure~\ref{StructuredOpen1_53}). This is what we mean by the term that there exists an \emph{algebraic loop} in the system.
Furthermore, note that there is a direct feed-through from $d_1$ to $z_1$ and from $v_1$ to $z_1$, but there is no direct feed-through from $d_1$ to $w_1$. Analogous situation is with the system $G_2$. These feed-through channels are illustrated with dashed lines in Figure~\ref{StructuredOpen1_53}. A distinguishing feature of the interconnected system \eqref{Systems_53}, \eqref{Interconnection_53} is that there is no direct feed-through path that goes from an external input of $G_1$ to an external output of $G_2$, that is, there is no direct feed-through path from $d_1$ to $z_2$. Analogously, there is no direct feed-through path from $d_2$ to $z_1$.  

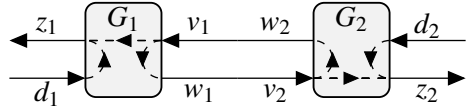
\begin{figure}[h!]
\centering
 \begin{tikzpicture}[scale=1]
 
\draw [line width=0.7pt, rounded corners] (1.5,1) rectangle (2.5,2.25);
\draw [line width=0.7pt, rounded corners] (4.5,1) rectangle (5.5,2.25);

\draw [line width=0.5pt, ->, >= triangle 45, rounded corners] (1.5,1.75) -- (0.5,1.75);
\draw [line width=0.5pt, ->, >= triangle 45, rounded corners] (0.5,1.25) -- (1.5,1.25);

\draw [line width=0.5pt, ->, >= triangle 45, rounded corners] (3.5,1.75) -- (2.5,1.75);
\draw [line width=0.5pt, rounded corners] (2.5,1.25) -- (3.5,1.25);

\draw [line width=0.5pt, rounded corners] (4.5,1.75) -- (3.5,1.75);
\draw [line width=0.5pt, ->, >= triangle 45, rounded corners] (3.5,1.25) -- (4.5,1.25);

\draw [line width=0.5pt, ->, >= triangle 45, rounded corners] (6.5,1.75) -- (5.5,1.75);
\draw [line width=0.5pt, ->, >= triangle 45, rounded corners] (5.5,1.25) -- (6.5,1.25);


\node at (1,1.95) {$z_1$};
\node at (1,1.05) {$d_1$};
\node at (2,2.04) {$G_1$};
\node at (3,1.95) {$v_1$};
\node at (3,1.05) {$w_1$};

\node at (4,1.95) {$w_2$};
\node at (4,1.05) {$v_2$};
\node at (5,2.04) {$G_2$};
\node at (6,1.95) {$d_{2}$};
\node at (6,1.05) {$z_{2}$};

%
%
%
%
%
%


\draw [ line width=0.5pt, dashed] (1.5,1.25) arc (-90:90:0.25);
\draw [line width=0.5pt, dashed ] (2.5,1.75) arc (90:270:0.25);

\draw [line width=0.5pt, dashed] (4.5,1.25) arc (-90:90:0.25);
\draw [line width=0.5pt, dashed ] (5.5,1.75) arc (90:270:0.25);

\draw [line width=0.5pt, dashed] (2.5,1.75) rectangle (1.5,1.75);
\draw [line width=0.5pt, dashed] (4.5,1.25) rectangle (5.5,1.25);

\draw [line width=.3pt, ->, >= triangle 45] (1.94,1.75) -- (1.9,1.75);
\draw [line width=.3pt, ->, >= triangle 45] (5.04,1.25) -- (5.1,1.25);

\draw [line width=.3pt, ->, >= triangle 45] (1.75,1.5) -- (1.75,1.6);
\draw [line width=.3pt, ->, >= triangle 45] (2.25,1.5) -- (2.25,1.4);

\draw [line width=.3pt, ->, >= triangle 45] (4.75,1.5) -- (4.75,1.6);
\draw [line width=.3pt, ->, >= triangle 45] (5.25,1.5) -- (5.25,1.4);


\begin{pgfonlayer}{background}
\filldraw [line width=0.1mm,rounded corners,black!5] (1.5,1) rectangle (2.5,2.25);
\end{pgfonlayer}

\begin{pgfonlayer}{background}
\filldraw [line width=0.1mm,rounded corners,black!5] (4.5,1) rectangle (5.5,2.25);
\end{pgfonlayer}

 \end{tikzpicture}
\caption{Two interconnected open systems.}
\label{StructuredOpen1_53}
\end{figure}

We use the symbol $G$ to denote the overall interconnected system presented in Figure~\ref{StructuredOpen1_53} and defined by \eqref{Systems_53} and \eqref{Interconnection_53}. The system $G$ has exogenous inputs $(d_1,d_2)$ and outputs $(z_1,z_2)$.   

\begin{theorem}
\label{OpenSeparationProposition_53}
Let $C_1$ and $C_2$ be full row rank matrices and suppose the interconnected system $G$ is well-posed. Suppose that the system $G$ is strictly dissipative with respect to a quadratic supply function $s_{ext}(d_1,d_2,z_1,z_2)$, which is structured as follows (has an additive structure)
\begin{equation}
\label{AdditiveSupply_53}
s_{\text{ext}}(d_1,d_2,z_1,z_2)=s_{1,ext}(d_1,z_1)+s_{2,ext}(d_2,z_2),
\end{equation}
and suppose there exists a storage function $V(x_1,x_2)$ which is an additive quadratic function as well, i.e., we have
\begin{equation}
\label{AdditiveStorage_53}
V(x_1,x_2)=V_1(x_1)+V_2(x_2),
\end{equation} 
with $V_1$ and $V_2$ quadratic functions.
Then there exist quadratic interconnection neutral supply functions $s_{1,int}(v_1,w_1)$ and $s_{2,int}(v_2,w_2)$ such that 
\begin{equation}
\label{NCondition}
s_{1,int}(v_1,w_1) + s_{2,int}(v_2,w_2) =0 \quad \text{for} \,\, v_1=w_2, v_2=w_1, 
\end{equation}
and 
\begin{itemize}
\item[i)] $G_1$ is strictly dissipative with respect to the supply \[s_1(d_1,v_1,z_1,w_1):=s_{1,ext}(d_1,z_1) + s_{1,int}(v_1,w_1)\] with the storage function $V_1(x_1)$;
\item[ii)] $G_2$ is strictly dissipative with respect to the supply \[s_2(d_2,v_2,z_2,w_2):=s_{2,ext}(d_2,z_2) + s_{2,int}(v_2,w_2)\] with the storage function $V_2(x_2)$.
\end{itemize}
\end{theorem}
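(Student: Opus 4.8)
The plan is to recast everything through the LMI characterisation \eqref{DisLMI} and reduce the existence of neutral supplies to a single matrix interpolation problem. First I would fix the quadratic storage blocks $V_i(x_i)=x_i^\top P_i x_i$ coming from \eqref{AdditiveStorage_53} and introduce, for $i=1,2$, the \emph{fixed} quadratic forms
\[ Q_i(x_i,v_i,d_i):=\dot V_i(x_i)-s_{i,ext}(d_i,z_i), \]
with $w_i=C_ix_i+D_iv_i$ and $z_i=F_ix_i+K_iv_i+L_id_i$ substituted from \eqref{Systems_53}. The only free object is the symmetric matrix $\Theta$ defining the candidate $s_{1,int}(v_1,w_1)$; the neutrality requirement \eqref{NCondition} then forces $s_{2,int}(v_2,w_2)=-s_{1,int}(w_2,v_2)$, so $s_{2,int}$ is not a second degree of freedom but is determined by $\Theta$ through an argument swap and sign change. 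In this language, claims (i)--(ii) become: find symmetric $\Theta$ such that the pulled-back form $s_{1,int}(v_1,C_1x_1+D_1v_1)\succ Q_1$ on the whole space of $(x_1,v_1,d_1)$, while $-s_{1,int}(C_2x_2+D_2v_2,v_2)\succ Q_2$ on the whole space of $(x_2,v_2,d_2)$.

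Next I would extract the precise way the hypothesis on $G$ feeds in. Because the two interconnection supplies cancel on the constraint $v_1=w_2,\ v_2=w_1$ by \eqref{Q2}, the strict dissipativity of $G$ with storage \eqref{AdditiveStorage_53} and supply \eqref{AdditiveSupply_53} is exactly the statement that $Q_1+Q_2\prec 0$ on the interconnection subspace $\cM$ where the interface signals of $G_1$ and $G_2$ match. This is the compatibility condition one needs: at any interface value reachable simultaneously by both subsystems, $\Theta$ is asked to lie strictly above $Q_1$ and strictly below $-Q_2$, and these two demands are consistent precisely because their sum is the negative quantity $Q_1+Q_2$ on $\cM$.

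The heart of the argument is then a separation/interpolation lemma for quadratic forms: given the two interface maps $\phi_1:(x_1,v_1)\mapsto(v_1,C_1x_1+D_1v_1)$ and $\phi_2:(x_2,v_2)\mapsto(C_2x_2+D_2v_2,v_2)$ and a form that is negative definite on the matched subspace $\{\phi_1=\phi_2\}$, there must exist an interface form $T$ (our $\Theta$) with $\phi_1^\ast T\succ Q_1$ and $\phi_2^\ast T\prec -Q_2$. This is where the standing assumptions are consumed: well-posedness (invertibility of $I-D_2D_1$) guarantees that $\cM$ is parametrised cleanly by the states, and the full-row-rank hypotheses on $C_1$ and $C_2$ make $\phi_1$ and $\phi_2$ surjective onto the interface space, which is what lets a negativity statement on $\cM$ be spread into two separate full-space inequalities. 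I expect this lemma --- rather than any bookkeeping --- to be the main obstacle, and I would isolate it (in the spirit of Appendix~\ref{Appendix_B}) and prove it either by a dualisation/S-procedure argument or by an explicit completion-of-the-square construction across the link. The $d_1$- and $d_2$-directions, on which $T$ does not act, are handled separately: the structural features of \eqref{Systems_53}--\eqref{Interconnection_53} (no feed-through from $d_i$ to $w_i$, and no direct path $d_1\to z_2$ or $d_2\to z_1$) let me set states and interface signals to zero and read off from the $G$-inequality that $s_{1,ext}(d_1,L_1d_1)>0$ and $s_{2,ext}(d_2,L_2d_2)>0$, which supply the missing positivity in the exogenous directions.

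Finally I would assemble the pieces: recover $s_{1,int}$ from $\Theta$ and $s_{2,int}$ from the swap--sign rule, verify \eqref{NCondition} by construction, and read the two inequalities $\phi_1^\ast T\succ Q_1$ and $\phi_2^\ast T\prec-Q_2$ back as the strict differential dissipation inequalities \eqref{DifferentialDissipativity} for $G_1$ and $G_2$ with storages $V_1$ and $V_2$, giving (i) and (ii).
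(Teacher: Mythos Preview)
Your plan is essentially the paper's own approach, stated more abstractly. The paper also reduces the question to finding a single interface quadratic form (its block-diagonal multiplier $\Pi_D$) sandwiched between the two subsystem data; its Steps~1--3 are precisely an execution of your ``separation/interpolation lemma''. Concretely: the congruence with $T$ in \eqref{newT_53} splits off $\ker C_i$, the two Schur complements leading to \eqref{DisCompact} produce reduced interface data $\cR,\cS,\cQ$, the auxiliary inequality $\cM\prec 0$ in \eqref{StructuredQSR_53} is exactly your compatibility condition ``$\bar Q_1+\bar Q_2\prec 0$ on the interface'', and the explicit choice \eqref{MultNew} is the convex combination $\Theta=(1-\alpha)\bar Q_1-\alpha\bar Q_2$ that sits strictly between them. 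So your completion-of-the-square hunch is right, and the full-row-rank assumption on $C_i$ is used exactly where you say --- to make $\phi_i$ surjective (equivalently, to make the paper's transformation $T$ nonsingular).

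One point needs tightening. Verifying only the pure block $s_{i,ext}(d_i,L_id_i)>0$ is not by itself enough to ``handle the $d_i$-directions separately'': $Q_i$ carries cross-terms between $d_i$ and $(x_i,v_i)$ (through $P_iE_i$, $S_i^PF_i$, $R_i^PK_i$, etc.), and positivity of the diagonal $d_i$-block does not control those. What that block positivity \emph{does} buy you is the right to Schur-complement $d_i$ out of $Q_i$ first; the reduced forms $\tilde Q_i(x_i,v_i)=\max_{d_i}Q_i(x_i,v_i,d_i)$ then still satisfy $\tilde Q_1+\tilde Q_2\prec 0$ on the interconnection subspace (because the hypothesis on $G$ holds for all $(d_1,d_2)$, and the maxima are attained), and your separation lemma applies to the reduced problem. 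This is exactly what the paper's double Schur complement in Step~1 accomplishes, so you should make that reduction explicit rather than leave the exogenous directions as an afterthought.
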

We emphasize that in the above theorem the storage functions $V_1$, $V_2$ from the statements (\emph{i}) and (\emph{ii}) are the same $V_1$, $V_2$ from \eqref{AdditiveStorage_53}. 

Theorem~\ref{OpenSeparationProposition_53} is a converse result to Proposition~\ref{Prop1} and its proof is given in Section~\ref{Sec6}. 
The presented proof indeed exploits the assumption that both $C_1$ and $C_2$ are full row rank matrices and it is still an open question can this assumption be relaxed in this rather general setting. In Section~\ref{Sec3_B} we show that when $D_1=0$ and $D_2=0$ the rank assumption can be omitted. 

Let us now consider an autonomous system which consists of two interconnected systems $\tilde{G}_1$ and $\tilde{G_2}$, as presented in Figure~\ref{StructuredClosed} and given by
\begin{equation}
\label{tildeGi}
\tilde{G}_i: \,\,\,
\begin{pmatrix} \dot{x}_i \\ w_i  \end{pmatrix}=
\begin{pmatrix} A_i & B_i \\
C_i & D_i \end{pmatrix}
\begin{pmatrix} x_i \\ v_i \end{pmatrix}, 
\quad i=1,2.
\end{equation}
Let $\tilde{G}$ denote the interconnected system obtained by imposing interconnection constraints $v_1=w_2$, $v_2=w_1$ on \eqref{tildeGi}. 
The following Corollary follows from Theorem~\ref{OpenSeparationProposition_53}. 
\begin{corollary}
\label{Cor1}
Let $C_1$ and $C_2$ be full row rank matrices. Suppose $\tilde{G}$ is well-posed, stable and admits an additive quadratic Lyapunov function of the form
$V(x_1,x_2)=V_1(x_1)+V_2(x_2)$. Then there exist quadratic interconnection neutral supply functions $s_{1,int}(v_1,w_1)$ and $s_{2,int}(v_2,w_2)$ such that $s_{1,int}(v_1,w_1) + s_{2,int}(v_2,w_2) =0$ for $v_1=w_2$, $v_2=w_1$ and
\begin{enumerate}[(i)] 
\item $G_1$ is strictly dissipative with respect to $s_{1,int}(v_1,w_1)$ with the storage function $V_1(x_1)$, 
\item $G_2$ is strictly dissipative with respect to $s_{2,int}(v_2,w_2)$ with the storage function $V_2(x_2)$.
\end{enumerate}  
\end{corollary}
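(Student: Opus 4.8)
The plan is to obtain Corollary~\ref{Cor1} as the exogenous-free specialization of Theorem~\ref{OpenSeparationProposition_53}. I would view each autonomous block $\tilde{G}_i$ of \eqref{tildeGi} as the open block $G_i$ of \eqref{Systems_53} in which the exogenous input $d_i$ and exogenous output $z_i$ have dimension zero; then $E_i$, $F_i$, $K_i$, $L_i$ are empty and the quadruple $(A_i,B_i,C_i,D_i)$ is unchanged, so that the interconnected open system $G$ coincides with $\tilde{G}$ and the interconnection $v_1=w_2$, $v_2=w_1$ is the same. Under this identification the two standing hypotheses of Theorem~\ref{OpenSeparationProposition_53} that are also hypotheses here, namely full row rank of $C_1$, $C_2$ and well-posedness of the interconnection, transfer verbatim, so it only remains to match the dissipativity hypothesis.

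For the dissipativity hypothesis I would take the external supply to be identically zero, i.e. $s_{\text{ext}}\equiv 0$ with the trivial split $s_{1,ext}\equiv 0$, $s_{2,ext}\equiv 0$; this is a legitimate (degenerate) quadratic supply of the additive form \eqref{AdditiveSupply_53}. Because $d_1,d_2$ are absent, the augmented vector $\col(x_1,x_2,d_1,d_2)$ reduces to $\col(x_1,x_2)$, so the strict dissipation inequality $\dot V < s_{\text{ext}}$ demanded by Theorem~\ref{OpenSeparationProposition_53} becomes exactly $\dot V(x_1,x_2)<0$ for all $\col(x_1,x_2)\neq 0$ along the trajectories of $\tilde G$. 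This is precisely the assertion that the additive quadratic $V=V_1+V_2$ is a (strict) Lyapunov function for the asymptotically stable $\tilde G$, which in this paper's convention supplies exactly the required negative definite $\dot V$. I note that the positive-definiteness of $V$ that accompanies a Lyapunov function is not needed here, since Theorem~\ref{OpenSeparationProposition_53} places no sign restriction on $V_1$, $V_2$.

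With every hypothesis matched, I would simply invoke the conclusion of Theorem~\ref{OpenSeparationProposition_53}: there exist quadratic interconnection neutral supply functions $s_{1,int}$, $s_{2,int}$ satisfying the neutrality identity \eqref{NCondition}, and each $G_i$ is strictly dissipative with respect to $s_{i,ext}+s_{i,int}$ with storage $V_i$. Since $s_{i,ext}\equiv 0$, this reads: $\tilde G_i$ is strictly dissipative with respect to $s_{i,int}$ with storage $V_i$, which is exactly claims (i) and (ii).

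The step I expect to require the most care is the bookkeeping of the degenerate zero-dimensional exogenous channels: one must confirm that specializing $n_{d_i}=n_{z_i}=0$ in Theorem~\ref{OpenSeparationProposition_53} is legitimate, in particular that its proof nowhere relies on a nontrivial $d_i$, and that strict dissipativity with an empty $d$ really coincides with the Lyapunov condition $\dot V<0$ for nonzero state. If one prefers to avoid empty matrices, an equivalent and fully nondegenerate route is to append to each block a decoupled exogenous channel ($E_i=F_i=K_i=0$, $L_i=I$) carrying a positive-definite external supply $s_{i,ext}$; strict dissipativity of the augmented $G$ then again reduces to $\dot V<0$, and because the added channel is decoupled the dissipativity of $\tilde G_i$ with respect to $s_{i,int}$ is recovered by setting $d_i=0$.
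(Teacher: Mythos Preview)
Your approach is correct and coincides with the paper's: the authors obtain Corollary~\ref{Cor1} directly from the proof of Theorem~\ref{OpenSeparationProposition_53} by omitting all terms related to the exogenous signals $d_1,d_2,z_1,z_2$ (equivalently, setting $E_i=F_i=K_i=L_i=0$) and the external supplies $s_{1,ext},s_{2,ext}$, which is exactly your zero-dimensional/trivial exogenous channel specialization. Your alternative route via a decoupled appended channel is not needed but is a sound workaround.
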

Proof of Corollary~\ref{Cor1} follows directly from the proof of Theorem~\ref{OpenSeparationProposition_53} presented in Section~\ref{Sec6_A} when in the proof we omit all the terms related to exogenous inputs/outputs $d_1, d_2, z_1, z_2$ (or equivalently, by setting $E_i=0$, $F_i=0$, $K_i=0$, $L_i=0$) and the external supply functions $s_{1,ext}(\cdot,\cdot)$ and $s_{2,ext}(\cdot,\cdot)$.

\subsection{Interconnection without an algebraic loop}
\label{Sec3_B}

In many real-life cases systems are interconnected with no direct feed-through matrices, i.e., matrices $D_1$ and $D_2$ from \eqref{Systems_53} are zero matrices. It turns out that such interconnections have some specific features. Some of them are summarized in the following proposition and in Section~\ref{Sec6}. 

\begin{proposition}
\label{PropositionRank}
Consider LTI systems given by \eqref{Systems_53} and suppose $D_1=0$ and $D_2=0$. Then Theorem~\ref{OpenSeparationProposition_53} and Corollary~\ref{Cor1} remain to hold even when $C_1$ and $C_2$ are not necessarily full row rank matrices. 
\end{proposition}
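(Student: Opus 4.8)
The plan is to retrace the proof of Theorem~\ref{OpenSeparationProposition_53} in Section~\ref{Sec6} and to isolate the single place where the full row rank of $C_1,C_2$ is invoked, showing that it becomes vacuous once $D_1=D_2=0$. I would first record the simplifications forced by $D_1=D_2=0$: the interconnection \eqref{Interconnection_53} is automatically well-posed, the coupling outputs reduce to $w_i=C_ix_i$, and along the interconnection one has $v_1=C_2x_2$ and $v_2=C_1x_1$. Writing the storage functions as $V_i(x_i)=x_i^\top P_i x_i$ and each interconnection supply as the quadratic form with matrix $M_i=\left(\begin{smallmatrix}Q_i&S_i\\ S_i^\top&R_i\end{smallmatrix}\right)$ in the coordinates $(v_i,w_i)$, a direct substitution of $v_1=w_2$, $v_2=w_1$ into \eqref{NCondition} shows that neutrality is equivalent to the single algebraic relation $M_2=-JM_1J$, where $J=\left(\begin{smallmatrix}0&I\\ I&0\end{smallmatrix}\right)$; explicitly $Q_2=-R_1$, $R_2=-Q_1$, $S_2=-S_1^\top$. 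Thus only one free multiplier $M_1$ must be produced.

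Next I would translate statements (i) and (ii) of Theorem~\ref{OpenSeparationProposition_53} into two strict LMIs $\Theta_1(M_1)\prec 0$ and $\Theta_2(M_1)\prec 0$ in the variables $\col(x_i,v_i,d_i)$, where $\Theta_i$ is affine in $M_1$ and the interconnection-supply contribution enters only through $H_i^\top M_i H_i$ with $H_i=\left(\begin{smallmatrix}0&I\\ C_i&0\end{smallmatrix}\right)$. The structural observation driving the proof is that, precisely because the $M_1$-dependent parts cancel under $M_2=-JM_1J$, substituting $v_1=C_2x_2$ and $v_2=C_1x_1$ and adding $\Theta_1,\Theta_2$ reproduces exactly the strictly negative-definite dissipation form of $G$ furnished by the hypothesis. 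Hence the task is a pure separation problem: a negative-definite quadratic form on the product space, coupled across the two subsystems only through the maps $C_1$ and $C_2$, is to be split into two individually negative forms by inserting the free neutral multiplier $M_1$, whose state--input cross contributions are $2x_1^\top(P_1B_1-C_1^\top S_1^\top)v_1$ in $\Theta_1$ and $2x_2^\top(P_2B_2+C_2^\top S_1)v_2$ in $\Theta_2$.

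I then locate the rank hypothesis. In the general proof the matrices $H_i$ are required to be surjective — and since $H_i$ row-reduces to $\left(\begin{smallmatrix}0&I\\ C_i&0\end{smallmatrix}\right)$, surjectivity of $H_i$ is equivalent to $C_i$ having full row rank — so that the elimination step matching the $(v_i,w_i)$-supply to the form obtained from the storage data is well-defined when $D_i\ne 0$ entangles $v_i$ and $w_i$. When $D_i=0$ this entanglement disappears: $H_i$ already has the triangular shape $\left(\begin{smallmatrix}0&I\\ C_i&0\end{smallmatrix}\right)$, the block of $\Theta_1$ acting on $v_1$ is exactly $-Q_1$ and the block of $\Theta_2$ acting on $v_2$ is exactly $R_1$ (uncontaminated by $C_i$), while $R_1$ and $Q_1$ reach the state only through $C_1^\top(\cdot)C_1$ and $C_2^\top(\cdot)C_2$. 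Consequently the potentially harmful state terms are supported on $\image C_i^\top$, precisely where the matching is carried out, and the elimination step of the theorem goes through with $H_i$ merely having full row rank on its own range; no surjectivity of $C_i$ is needed. I would use $S_1$ to balance the two cross terms above as far as $\image C_i^\top$ permits, leaving the feasibility of the remaining diagonal conditions to the separation argument.

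The main obstacle I expect is the two-sided magnitude trade-off between $Q_1$ and $R_1$: enlarging $Q_1$ improves $\Theta_1$ but aggravates the $x_2$-block $+C_2^\top Q_1C_2$ of $\Theta_2$, and enlarging $|R_1|$ improves $\Theta_2$ but aggravates the $x_1$-block $-C_1^\top R_1C_1$ of $\Theta_1$, so one must show that the admissible windows for $(Q_1,S_1,R_1)$ overlap. (A scalar instance already shows that both magnitudes are constrained, and that the windows meet \emph{iff} the $G$-form is negative definite, so a coefficient-by-coefficient argument is not the right tool.) I would close this by an LMI duality / S-procedure argument, invoking the separation result of Appendix~\ref{Appendix_B} and using the strictness of the $G$-dissipation inequality to obtain a \emph{strictly} feasible multiplier rather than a boundary one. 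Finally, Corollary~\ref{Cor1} is recovered verbatim by deleting all exogenous data (setting $E_i=F_i=K_i=L_i=0$ and $s_{i,ext}=0$), so the identical construction applies and the rank hypothesis is likewise unnecessary there.
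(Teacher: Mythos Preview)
Your plan diverges substantially from the paper's proof, and the crucial closing step is not actually carried out.

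The paper does \emph{not} re-examine the elimination in the proof of Theorem~\ref{OpenSeparationProposition_53} and argue that the rank hypothesis becomes vacuous when $D_i=0$. Instead it performs a \emph{reduction to the full-rank case}. Writing $C_i=\bigl(\begin{smallmatrix}J_i\\ I\end{smallmatrix}\bigr)\tilde C_i$ with $\tilde C_i$ of full row rank, the key observation is that precisely because $D_i=0$ the static factor $\bigl(\begin{smallmatrix}J_i\\ I\end{smallmatrix}\bigr)$ can be pushed across the interconnection into the neighbouring subsystem's $B$ and $K$ matrices, yielding modified systems $\tilde G_1,\tilde G_2$ whose interconnection $\tilde G$ is identical to $G$ as an input--output map $d\mapsto z$ but now has full-rank output maps $\tilde C_i$. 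Theorem~\ref{OpenSeparationProposition_53} then applies verbatim to $\tilde G$ and produces neutral supplies $(\tilde Q,\tilde S,\tilde R)$ on the \emph{reduced} signals $(\tilde v_i,\tilde w_i)$. The remaining work is to \emph{extend} these back to $(v_i,w_i)$: one imposes the compatibility relation $\bigl(\begin{smallmatrix}J_2\\ I\end{smallmatrix}\bigr)^{\!\top}Q\bigl(\begin{smallmatrix}J_2\\ I\end{smallmatrix}\bigr)=\tilde Q$ (and analogues for $R,S$), and shows that the free parameters in the extension --- a scalar $\gamma_Q$ controlling the block of $Q$ along $\kernel\bigl(\begin{smallmatrix}J_2\\ I\end{smallmatrix}\bigr)^{\!\top}$, and likewise $\gamma_R$ --- can be taken large enough to restore strict negativity of the two local dissipation LMIs on the full $(x_i,v_i,d_i)$ space via a Schur-complement argument.

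Your proposal, by contrast, leaves the decisive step open. You correctly identify the two-sided trade-off between $Q_1$ and $R_1$, but your resolution ``close this by an LMI duality / S-procedure argument, invoking the separation result of Appendix~\ref{Appendix_B}'' does not work as stated: Lemma~\ref{Lemma1} in Appendix~\ref{Appendix_B} is a robustness statement about scaling the coupling by $\alpha\in[0,1]$, not a separation tool that produces a multiplier $M_1$ splitting one strict LMI into two. Moreover, your diagnosis of \emph{where} the rank assumption enters in the proof of Theorem~\ref{OpenSeparationProposition_53} is imprecise: it is not an abstract surjectivity of $H_i$, but the concrete requirement that $C_W=\diag(C_2W_2,C_1W_1)$ in the congruence matrix $T$ of \eqref{newT_53} be nonsingular, and this fails regardless of whether $D_i=0$. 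So the claim that ``the elimination step of the theorem goes through with $H_i$ merely having full row rank on its own range'' is not substantiated --- you would have to redo Step~1 of that proof from scratch with a different transformation, and you have not indicated how. The paper's reduction sidesteps all of this by never re-entering the machinery of Theorem~\ref{OpenSeparationProposition_53} in the rank-deficient setting.
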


Note that with $D_1=0$ and $D_2=0$, the systems $G$ and $\tilde{G}$ are necessarily well-posed.

Proof of Proposition~\ref{PropositionRank} is given in Section~\ref{Sec6_B}.
%

\section{DYNAMICAL NETWORKS}
\label{Sec4}

In this section we first extend the results from Section~\ref{Sec3} from two interconnected systems to more general dynamical networks. After introducing necessary definitions, we study acyclic networks, where, due to space limitations, we present results for \emph{autonomous} acyclic dynamical networks only (i.e., generalization of Corollary~\ref{Cor1}). The corresponding results for \emph{open} acyclic networks which are dissipative with respect to exogenous supply functions with additive structure (i.e., generalization of Theorem~\ref{OpenSeparationProposition_53}) straightforwardly follow along the same lines. 
At the end of the section, we summarize several results regarding neutral supply rates in networks with cycles. 

\subsection{Dynamical networks and additive Lyapunov functions}
\label{Sec4_A}
We define a \emph{dynamical network} as a finite set of dynamical systems interconnected over some graph. More precisely, we use a directed graph $\Gamma:=(\Omega,E)$ in which each vertex $G_i \in \Omega$ is identified with a dynamical system, while a directed edge $(G_i, G_j) \in E$ means that there is an output signal of $G_i$ that is input to $G_j$. 
With an edge $(G_i, G_j) \in E$ we make the following input/output definitions: $w_{ij}$ is an output from the system $G_i$ and $v_{ji}$ is the input to the system $G_j$. The interconnection constraint related to the edge $(G_i, G_j)$ is given by $v_{ji}=w_{ij}$. We use $x_i$ to denote the state vector of system $G_i$.


From the directed interconnection graph $\Gamma$, we define an undirected graph $\hat{\Gamma}=(\Omega,\hat{E})$, as follows
\begin{equation}
 \Big((G_i,G_j) \in E\Big)\,\, \text{or}\,\, \Big( (G_j,G_i) \in E \Big) \Longleftrightarrow (G_i,G_j) \in \hat{E}. \nonumber
\end{equation}
Graph $\hat{\Gamma}$ will be used below in this section to define what we mean by the term acyclic network.

Let $\cN_i$ denote the set of indices of systems adjacent to $G_i$ in the graph $\hat{\Gamma}$, that is, $\cN_i:=\{j \,\, : \,\, (G_i,G_j) \in \hat{E}\}$. Consider an arbitrary system $G_i$ from the dynamical network and let $\cN_i=\{N_1^i,N_2^i,\ldots,N_{r(i)}^i\}$ where $r(i)=|\cN_i|$. We assume the system $G_i$ has a state space realization of the following form
 
\begin{equation}
\label{GiNetwork}
G_i: \,\, \begin{pmatrix}\dot{x}_i \\ w_{iN_1^i} \\ w_{iN_2^i} \\ \vdots \\ w_{iN_{r(i)}^i} \end{pmatrix}=
\begin{pmatrix} A_i & B_{iN_1^i} & B_{iN_2^i} & \ldots & B_{iN_{r(i)}^i} \\
C_{iN_1^i} & D_{iN_1^i} & 0 & \ldots & 0 \\
C_{iN_2^i} & 0 & D_{iN_2^i} & \ldots & 0\\
\vdots & \vdots & \ddots & \vdots & 0\\
C_{iN_{r(i)}^i} & 0 & 0 & \ldots & D_{iN_{r(i)}^i} \end{pmatrix}
\begin{pmatrix} x_i\\ v_{iN_1^i} \\ v_{iN_2^i} \\ \vdots \\ v_{iN_{r(i)}^i} \end{pmatrix}. 
\end{equation}

A distinguishing feature of \eqref{GiNetwork} is that there are no direct feed-through paths from $v_{ik}$ to $w_{il}$ when $k \neq l$.

Let $N$ denote the number of systems in a dynamical network, i.e., $N=|\Omega|$, and suppose the overall dynamics of the network is described by $\dot{x}=\cA x$, where $x=\col(x_1,\ldots,x_N)$, $x_i \in \Rset^{n_i}$. We say that the dynamical network admits an additive quadratic Lyapunov function if there exists a block diagonal matrix $P=\diag(P_1,\ldots,P_N)$ with symmetric matrices $P_i\in\Rset^{n_i \times n_i}$, $P_i \succ 0$, such that $\cA^\top P + P \cA \prec 0$. Indeed, the term \emph{additive} is used since the Lyapunov function defined with $P$ is given by
\begin{equation}
\label{AdditiveLyap}
V(x)=\underbrace{x_1^\top P_1 x_1}_{V_1(x_1)} + \ldots \underbrace{x_N^\top P_N x_N}_{V_N(x_N)},
\end{equation}
that is, $V(x)$ is a sum of local functions $V_i(x_i)$, where each $V_i$ is \emph{local} to the system $i$ in a sense that it depends only on the states of that system. 

\subsection{Acyclic dynamical networks}
\label{SecIV_B}

We say that a dynamical network defined with a directed graph $\Gamma$ is \emph{acyclic} dynamical network if the corresponding undirected graph $\hat{\Gamma}=(\Omega,\hat{E})$ is acyclic. For definition of an acyclic graph we refer to, e.g., \cite{Bollobas}. 


\begin{theorem}
\label{TH1}
Suppose the graph $\hat{\Gamma}$ is acyclic, each $G_i$ is represented in the form of \eqref{GiNetwork} and either one of the following two cases holds:  
\begin{itemize}
\item[1)] the matrices $D_{ij}$ in \eqref{GiNetwork} are in general non-zero matrices while each matrix $C_{ij}$ in \eqref{GiNetwork} has full row rank; 
\item[2)] $D_{ij}=0$ for all $(i,j)$ such that $(G_i,G_j) \in E$.
\end{itemize}
Let the overall interconnected network be well-posed~\footnote{In the case (2) this is not an assumption as then the network is necessarily well-posed.}.  
Then the following two statements are equivalent:
\begin{itemize}
\item[i)] The dynamical network admits an additive quadratic Lyapunov function of the form \eqref{AdditiveLyap}.
\item[ii)] For each $G_i \in \Omega$ and each $j \in \cN_i$ there exists a quadratic function $s_{ij}(v_{ij},w_{ij})$ such that 
\begin{itemize}
\item[a)] $G_i$ is strictly dissipative with respect to the supply function $\sum_{j \in \cN_i} s_{ij}(v_{ij},w_{ij})$ with local function $V_i(x_i)=x_i^\top P_i x_i$, with $P_i \succ 0$, as a storage function; 
\item[b)] $s_{ij}(v_{ij},w_{ij})+s_{ji}(v_{ji},w_{ji})=0$ for each $(i,j)$ such that $(G_i,G_j) \in \hat{E}$.
\end{itemize}
\end{itemize}
\end{theorem}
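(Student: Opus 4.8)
The plan is to prove the two implications separately, with (ii)$\Rightarrow$(i) being an immediate multi-system analogue of Proposition~\ref{Prop2} and (i)$\Rightarrow$(ii) carrying all the weight. For (ii)$\Rightarrow$(i) I would sum the strict dissipation inequalities $\dot V_i(x_i) < \sum_{j\in\cN_i} s_{ij}(v_{ij},w_{ij})$ over all $i$, regroup the right-hand side by the edges of $\hat\Gamma$, and invoke the edge-wise neutrality (b): along network trajectories $v_{ji}=w_{ij}$, $v_{ij}=w_{ji}$, so each edge contributes $s_{ij}+s_{ji}=0$ and everything cancels, leaving $\sum_i\dot V_i<0$ for $x\neq 0$. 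Hence $V=\sum_i V_i$ certifies the strict Lyapunov inequality $\cA^\top P+P\cA\prec 0$ with $P=\diag(P_1,\dots,P_N)\succ 0$. For the hard direction I would argue by induction on $N=|\Omega|$, using that an acyclic $\hat\Gamma$ is a forest; since block-diagonal $P$ and the graph structure decouple the problem over connected components, I may assume $\hat\Gamma$ is a tree. To make the induction close, I would actually prove the slightly more general \emph{open}-network statement announced in the text (the generalization of Theorem~\ref{OpenSeparationProposition_53}): each $G_i$ may additionally carry an external port $(d_i,z_i)$, the interconnected open system is strictly dissipative with respect to an additive external supply $\sum_i s_{i,ext}(d_i,z_i)$ with additive storage $\sum_i V_i$, and the conclusion yields edge-wise neutral $s_{ij}$ making each $G_i$ strictly dissipative w.r.t. $s_{i,ext}+\sum_{j\in\cN_i}s_{ij}$. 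The autonomous Theorem~\ref{TH1} is the special case with all external ports absent; the base case $N=1$ is vacuous and $N=2$ is exactly Theorem~\ref{OpenSeparationProposition_53} (case~1) or Proposition~\ref{PropositionRank} (case~2).

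For the inductive step I would pick a leaf $G_\ell$ of the tree with unique neighbour $G_p$ and view the whole system as the interconnection of two super-systems along the single edge $(p,\ell)$: $\tilde G_2:=G_\ell$ and $\tilde G_1:=$ the interconnection of $\{G_i:i\neq\ell\}$ over the subtree $\hat\Gamma\setminus\{\ell\}$, connected via $v_{\ell p}=w_{p\ell}$, $v_{p\ell}=w_{\ell p}$. The additive storage splits as $V=V_{\tilde{1}}+V_{\tilde{2}}$ with $V_{\tilde{1}}=\sum_{i\neq\ell}V_i$ and $V_{\tilde{2}}=V_\ell$, both quadratic and positive definite, and the additive external supply splits analogously. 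The interconnection output matrix of $\tilde G_2$ is $C_{\ell p}$ and that of $\tilde G_1$ is $C_{p\ell}$ padded with zero blocks, because the block-diagonal structure of \eqref{GiNetwork} forces $w_{p\ell}=C_{p\ell}x_p+D_{p\ell}v_{p\ell}$; hence the full-row-rank hypothesis (case~1), respectively $D=0$ (case~2), is inherited by both super-systems. Applying Theorem~\ref{OpenSeparationProposition_53} (or Proposition~\ref{PropositionRank}) to this two-super-system split produces neutral supplies I name $s_{p\ell}$ and $s_{\ell p}$ with $s_{p\ell}+s_{\ell p}=0$ on the constraint, such that $G_\ell$ is strictly dissipative w.r.t. $s_{\ell,ext}+s_{\ell p}$ (the leaf is finished, as $\cN_\ell=\{p\}$) and $\tilde G_1$ is strictly dissipative w.r.t. $s_{\tilde{1},ext}+s_{p\ell}$ with storage $V_{\tilde{1}}$. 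Absorbing $s_{p\ell}(v_{p\ell},w_{p\ell})$ into an enlarged external port at node $G_p$ turns $\tilde G_1$ into an open acyclic network on the $(N{-}1)$-node subtree meeting all hypotheses (the rank condition is required only on interconnection outputs, so the enlarged external output of $G_p$ needs none); the inductive hypothesis then supplies neutral $s_{ij}$ on every remaining edge, and one checks that $G_p$'s supply reassembles as $s_{p,ext}+\sum_{j\in\cN_p}s_{pj}$ while all other nodes are untouched, closing the induction.

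The main obstacle I anticipate is the bookkeeping needed to certify that every hypothesis of Theorem~\ref{OpenSeparationProposition_53} genuinely transfers to the super-systems at each peel, most delicately well-posedness. Here the structural restrictions pay off: the block-diagonal feed-through in \eqref{GiNetwork} together with acyclicity of $\hat\Gamma$ makes each algebraic loop \emph{edge-local}, since $w_{ij}$ feeds only through $v_{ij}$, so the only loops are the two-cycles residing on individual edges. Consequently well-posedness of the full network is equivalent to resolvability of each edge's two-cycle, which is precisely what is needed for both $\tilde G_1$ and the two-super-system interconnection to be well-posed, and in case~2 it is automatic. A secondary point requiring care is that relabelling the port $(v_{p\ell},w_{p\ell})$ as external keeps $G_p$ in the admissible form of \eqref{GiNetwork}: no cross-feedthrough is created because $w_{pj}$ is independent of $v_{p\ell}$ for $j\neq\ell$, so the inductive hypothesis applies verbatim.
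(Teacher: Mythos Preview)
Your proposal is correct and follows essentially the same strategy as the paper: both directions are handled as you describe, and the hard direction (i)$\Rightarrow$(ii) is obtained by repeatedly splitting the tree along an edge and invoking the two-system results (Theorem~\ref{OpenSeparationProposition_53}/Corollary~\ref{Cor1} in case~(1), Proposition~\ref{PropositionRank} in case~(2)), each time turning the freshly severed interconnection port into an external port with an additive external supply so that the two-system hypotheses are met at the next split.

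The only difference is organizational. The paper does not induct on $|\Omega|$ by peeling a leaf; instead it fixes an \emph{arbitrary} vertex $G_i$, and iteratively removes the edges in $\mathcal{E}_i$ one at a time: at step~$k$ the edge $e_k^i$ disconnects the current subtree $\hat\Gamma_{k-1}$ into $\hat\Gamma_+(\hat\Gamma_{k-1},e_k^i)$ (containing $G_i$) and $\hat\Gamma_-(\hat\Gamma_{k-1},e_k^i)$, and the two-system result is applied to that split; the next iterate is $\hat\Gamma_k:=\hat\Gamma_+(\hat\Gamma_{k-1},e_k^i)$. After $r(i)$ steps, $\hat\Gamma_{r(i)}=\{G_i\}$ is left strictly dissipative with respect to $\sum_{j\in\cN_i}s_{ij}$ and storage $V_i$. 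Your leaf-peeling induction is the special case where the chosen vertex has degree one, packaged so that the open-network version of the statement serves as the inductive hypothesis; this makes the global consistency of the $s_{ij}$ across all edges explicit, whereas in the paper that consistency is implicit in the recursive structure of the procedure (cf.\ Example~\ref{Example1}). The structural checks you flag---edge-locality of algebraic loops forced by the block-diagonal feed-through in \eqref{GiNetwork}, inheritance of the full-row-rank condition by the super-system $C$-matrix $[0\;\cdots\;0\;C_{p\ell}\;0\;\cdots\;0]$, and preservation of the form \eqref{Systems_53} when an interconnection port is reinterpreted as external---are exactly the points that make each application of Theorem~\ref{OpenSeparationProposition_53} legitimate, and they hold equally in both organizations.
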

Note that the condition in part (\emph{b}) of the statement (\emph{ii}) means that the supplies $s_{ij}$ and $s_{ji}$ are interconnection neutral supply functions for the interconnections between the systems $G_i$ and $G_j$. 
We also emphasize that in the above theorem each local function $V_i(x_i)$ from (\emph{i}) is indeed the same $V_i(x_i)$ as in (\emph{ii}). 

\medskip
\begin{proof}[Proof of Theorem~\ref{TH1}]The implication (\emph{ii}) $\implies$ (\emph{i}) is trivial and is a straightforward generalization of Proposition~\ref{Prop2}. It remains to show (\emph{i}) $\implies$ (\emph{ii}). The proof is based on a repeated applications of Theorem~\ref{OpenSeparationProposition_53} and Corollary~\ref{Cor1} for the case (1), or Proposition~\ref{PropositionRank} for the case (2). 

Suppose first that $\hat{\Gamma}$ is a connected graph.
Consider an arbitrary system $G_i$ from the network and let $\mathcal{N}_i=\{N_1^i,N_2^i,\ldots,N_{r(i)}^i\}$ and $r(i)$ be defined as above in Section~\ref{Sec4_A}. Let $\mathcal{E}_i=\{e_1^i,e_2^i,\ldots,e_{r(i)}^i\}:=\{(G_i,G_{N_1^i}),\ldots,(G_i,G_{N_{r(i)}^i})\}$, that is, $\mathcal{E}_i$ is the set of all undirected edges which have $G_i$ as an end vertex. For each $G_i \in \Omega$ and $j \in \{1, \ldots,r(i)\}$ we define the following two subgraphs of $\hat{\Gamma}$:
\begin{enumerate}
\item[i)] $\hat{\Gamma}_+(\hat{\Gamma},e^i_j)$ is the connected component of the graph $\hat\Gamma - e_j^i$ (the graph obtained by removing the edge $e_j^i$ from the graph $\hat\Gamma$) which contains $G_i$;
\item[ii)] $\hat{\Gamma}_-(\hat{\Gamma},e^i_j)$ is the connected component of the graph $\hat\Gamma - e_j^i$ which does not contain $G_i$.
\end{enumerate}
Note that since $\hat{\Gamma}$ is an acyclic graph, for each edge $e^i_j$ we can view the overall dynamical network as an interconnection of two systems: one as the dynamical network corresponding to $\hat{\Gamma}_+(\hat{\Gamma},e^i_j)$ and the other as the dynamical network corresponding to $\hat{\Gamma}_-(\hat{\Gamma},e^i_j)$.   
In the overall network $\hat{\Gamma}$, the two systems are interconnected over the edge $e^i_j$ only. In the remainder we will refer to a dynamical system (dynamical network) simply by referring to its graph, i.e., we will use the term ``system $\hat{\Gamma}_+(\hat{\Gamma},e^i_j)$''. 

Next we infer existence of interconnection neutral supply functions $s_{ij}$, $s_{ji}$ for an arbitrary $G_i \in \Omega$ and for all $j \in \mathcal{N}_i$, which satisfy conditions (\emph{ii}) from the theorem. In other words, we infer existence of neutral supply functions defined on all edges in $\mathcal{E}_i$ for an arbitrary $G_i \in \Omega$. This is an iterative procedure described as follows, with $k$ as the iteration counter:\\
Let $\hat \Gamma_0 := \hat \Gamma$, $k=1$.\\
While $k < r(i) +1$:
\begin{itemize}
	\item if $k=1$ we apply Corollary~\ref{Cor1} (in case (1)) or Proposition~\ref{PropositionRank} (in case (2)), 
	\item if $k > 1$ we apply Theorem~\ref{OpenSeparationProposition_53} (in case (1)), or Proposition~\ref{PropositionRank} (in case (2)),
\end{itemize}
to infer the existence of interconnection neutral supply functions on the edge $e^i_k$ between the systems $\hat{\Gamma}_+(\hat{\Gamma}_{k-1},e^i_k)$ and $\hat{\Gamma}_-(\hat{\Gamma}_{k-1},e^i_k)$. With $j=N_k^i$, denote these interconnection neutral supply functions with $s_{ij}(v_{ij}, w_{ij})$ and $s_{ji}(v_{ji},w_{ji})$ from the statement (ii) of the theorem. \\
Denote $\hat \Gamma_k = \hat \Gamma_+ (\hat \Gamma_{k-1}, e_k^i)$. Increase $k$ by one and continue the procedure.

At each step of the procedure, the system $\hat \Gamma_k$ is strictly dissipative with respect to the external supply function $\sum_{j\in \{N_1^i, \ldots, N_{k}^i\}}s_{ij}(v_{ij},w_{ij})$ with the corresponding storage function being $\sum_{p \in T_{k}} V_p(x_p)$. Here $T_{k}$ denotes the set of indices $p$ of systems $G_p$ which belong to the vertex set of $\hat{\Gamma}_{k}$. 

Finally, if the graph $\hat{\Gamma}$ is not connected, we can apply the presented procedure to each connected component of $\hat{\Gamma}$ separately.  
\end{proof}

The key feature in the procedure described in the above proof is that in each step all the assumptions for applying Corollary~\ref{Cor1} or Theorem~\ref{OpenSeparationProposition_53} are necessarily satisfied at each iteration.

\begin{remark}
\label{RemarkAcyclic}
In the case of a graph $\hat{\Gamma}$ with a cycle, if $e \in E$ is an edge from a cycle in $\hat{\Gamma}$, we cannot use the above stated definitions of $\hat{\Gamma}_+(\hat{\Gamma},e)$ and $\hat{\Gamma}_-(\hat{\Gamma},e)$ to divide the network into two systems and to subsequently apply Corollary~\ref{Cor1} or Theorem~\ref{OpenSeparationProposition_53}.
\end{remark}

\begin{example}
\label{Example1}
In this example we illustrate the iterative procedure (presented in the proof of Theorem~\ref{TH1}) for obtaining interconnection neutral supply rates for all three edges in the graph $\hat{\Gamma}$ in Figure~\ref{ProofNetwork}a. Suppose the considered network belongs to the case (1) from Theorem~\ref{TH1}. First the overall network is seen as the interconnection of $G_1$ and $G_\alpha$, where $G_\alpha$ consists of interconnected $G_2$, $G_3$ and $G_4$, as presented in Figure~\ref{ProofNetwork}a. Corollary~\ref{Cor1} applies and we infer existence of interconnection neutral supply functions $s_{1\alpha}$, $s_{\alpha 1}$. Next, consider interconnection of $G_4$ and $G_\beta$, where $G_\beta$ consists of interconnected $G_2$ and $G_3$, as presented in Figure~\ref{ProofNetwork}b. Neutral supplies follow from Theorem~\ref{OpenSeparationProposition_53}. Finally, consider $G_2$ and $G_3$ and apply again Theorem~\ref{OpenSeparationProposition_53} to obtain $s_{23},s_{32}$, see Figure~\ref{ProofNetwork}c. Note that $s_{1\alpha}=s_{1\beta}=s_{12}$, $s_{\alpha 1}=s_{\beta 1}=s_{21}$, $s_{4 \beta}=s_{42}$, $s_{\beta 4}=s_{24}$.                
\end{example}


\begin{figure}[h!]
\centering
 \begin{tikzpicture}
 
\draw [line width=0.7pt, rounded corners] (2.6,0) rectangle (6.8,4.5);
\begin{pgfonlayer}{background}
\filldraw [line width=0.1mm, black!2] (2.6,0) rectangle (6.8,4.5);
\end{pgfonlayer}

\draw [line width=0.7pt ] (1,1) circle (0.5);
\draw [line width=0.7pt ] (3.5,1) circle (0.5);
\draw [line width=0.7pt ] (3.5,3.5) circle (0.5);
\draw [line width=0.7pt ] (6,1) circle (0.5);

\draw [line width=2.5pt] (1.5,1) -- (3,1);
\draw [line width=0.75pt] (4,1) -- (5.5,1);
\draw [line width=0.75pt] (3.5,1.5) -- (3.5,3);

\begin{pgfonlayer}{background}
\filldraw [line width=0.1mm, black!10] (1,1) circle (0.5);
\filldraw [line width=0.1mm, black!10] (3.5,1) circle (0.5);
\filldraw [line width=0.1mm, black!10] (3.5,3.5) circle (0.5);
\filldraw [line width=0.1mm, black!10] (6,1) circle (0.5);
\end{pgfonlayer}

\node at (1,1) {$G_1$};
\node at (3.5,1) {$G_2$};
\node at (6,1) {$G_3$};
\node at (3.5,3.5) {$G_4$};

\node at (6,3.5) {$G_{\alpha}$};

\node at (3.5,-0.7) {a)};

\node at (2,1.4) {$s_{1\alpha}$};
\node at (2,0.6) {$s_{\alpha 1}$};


\draw [line width=0.7pt, rounded corners] (6+2.6,0) rectangle (6+6.8,2);
\begin{pgfonlayer}{background}
\filldraw [line width=0.1mm, black!2] (6+2.6,0) rectangle (6+6.8,2);;
\end{pgfonlayer} 

\draw [line width=0.7pt ] (6+3.5,1) circle (0.5);
\draw [line width=0.7pt ] (6+3.5,3.5) circle (0.5);
\draw [line width=0.7pt ] (6+6,1) circle (0.5);

\draw [line width=2.5pt] (6+1.5,1) -- (6+3,1);
\draw [line width=0.75pt] (6+4,1) -- (6+5.5,1);
\draw [line width=2.5pt] (6+3.5,1.5) -- (6+3.5,3);

\begin{pgfonlayer}{background}
\filldraw [line width=0.1mm, black!10] (6+3.5,1) circle (0.5);
\filldraw [line width=0.1mm, black!10] (6+3.5,3.5) circle (0.5);
\filldraw [line width=0.1mm, black!10] (6+6,1) circle (0.5);
\end{pgfonlayer}

\node at (6+3.5,1) {$G_2$};
\node at (6+6,1) {$G_3$};
\node at (6+3.5,3.5) {$G_4$};

\node at (6+4.8,1.5) {$G_{\beta}$};

\node at (6+2,1.4) {$s_{1\beta}$};
\node at (6+2,0.6) {$s_{\beta1}$};

\node at (6+3,2.5) {$s_{4\beta}$};
\node at (6+4,2.5) {$s_{\beta4}$};

\node at (6+3.5,-0.7) {b)};


\draw [line width=0.7pt ] (3.5,1-4.5) circle (0.5);
\draw [line width=0.7pt ] (6,1-4.5) circle (0.5);

\draw [line width=2.5pt] (1.5,1-4.5) -- (3,1-4.5);
\draw [line width=2.5pt] (4,1-4.5) -- (5.5,1-4.5);
\draw [line width=2.5pt] (3.5,1.5-4.5) -- (3.5,3-4.5);

\begin{pgfonlayer}{background}
\filldraw [line width=0.1mm, black!10] (3.5,1-4.5) circle (0.5);
\filldraw [line width=0.1mm, black!10] (6,1-4.5) circle (0.5);
\end{pgfonlayer}

\node at (3.5,1-4.5) {$G_2$};
\node at (6,1-4.5) {$G_3$};

\node at (2,1.4-4.5) {$s_{12}$};
\node at (2,0.6-4.5) {$s_{21}$};

\node at (4.75,1.4-4.5) {$s_{23}$};
\node at (4.75,0.6-4.5) {$s_{32}$};

\node at (3,2.5-4.5) {$s_{42}$};
\node at (4,2.5-4.5) {$s_{24}$};

\node at (3.5,-0.3-4.5) {c)};


 \end{tikzpicture}
\caption{Illustration of the iterative procedure from the proof of Theorem~\ref{TH1}.}
\label{ProofNetwork}
\end{figure}
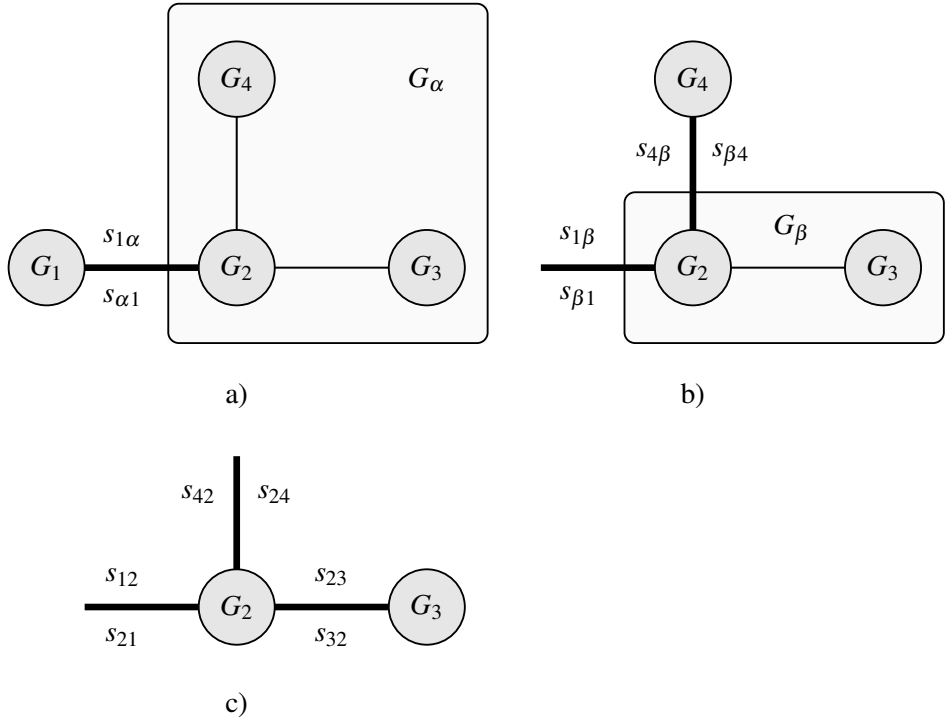



\subsection{Networks with cycles}
We have already in Remark~\ref{RemarkAcyclic} pointed out to the main difficulty of generalizing Theorem~\ref{TH1} to networks where $\hat{\Gamma}$ contains cycles. However, Theorem~\ref{TH1} can be applied to infer certain dissipativiy properties in networks with cycles as well, based on the following simple observation. 

If we group subsets of connected systems from a dynamical network (i.e., from $\hat{\Gamma}$) and consider each of these groups as a single dynamical system, we obtain a graph of a network characterized with new set of vertices (``new systems'') and edges (``new interconnections''). We assume this grouping of systems is such that each system belongs to one and only one group. In this way, from $\hat{\Gamma}$ we obtain new undirected graph $\hat{\Gamma}_{new}$. Indeed, the underlying dynamical network behind  $\Gamma_{new}$ remains the same. This process is illustrated in Figure~\ref{NewNetwork}.  


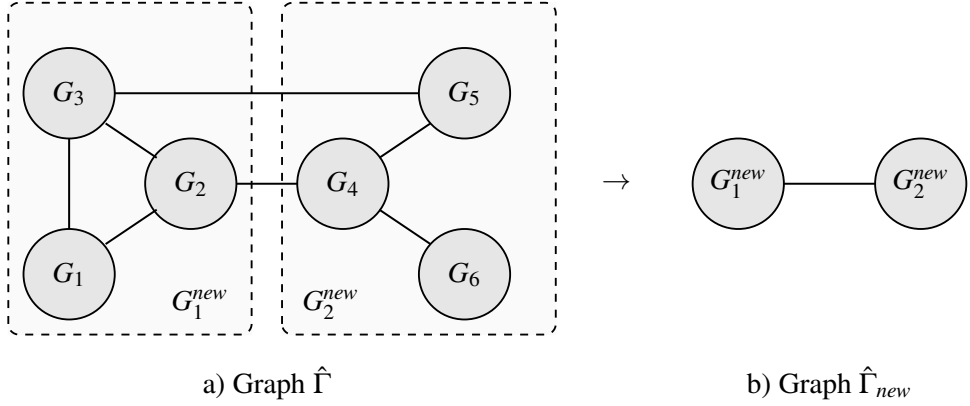
\begin{figure}[h!]
\centering
 \begin{tikzpicture}[scale=0.8]
 
\draw [line width=0.7pt, rounded corners, dashed] (0,0) rectangle (4,5.5);
\begin{pgfonlayer}{background}
\filldraw [line width=0.1mm, black!2] (0,0) rectangle (4,5.5);
\end{pgfonlayer} 

\draw [line width=0.7pt, rounded corners, dashed] (4.5,0) rectangle (9,5.5);
\begin{pgfonlayer}{background}
\filldraw [line width=0.1mm, black!2] (4.5,0) rectangle (9,5.5);
\end{pgfonlayer} 


\draw [line width=0.7pt ] (1,1) circle (0.75);
\draw [line width=0.7pt ] (1,4) circle (0.75);
\draw [line width=0.7pt ] (3,2.5) circle (0.75);
\draw [line width=0.7pt ] (5.5,2.5) circle (0.75);
\draw [line width=0.7pt ] (7.5,1) circle (0.75);
\draw [line width=0.7pt ] (7.5,4) circle (0.75);

\draw [line width=0.7pt ] (12,2.5) circle (0.75);
\draw [line width=0.7pt ] (15,2.5) circle (0.75);

\begin{pgfonlayer}{background}
\filldraw [line width=0.1mm, black!10] (1,1) circle (0.75);
\filldraw [line width=0.1mm, black!10] (1,4) circle (0.75);
\filldraw [line width=0.1mm, black!10] (3,2.5) circle (0.75);
\filldraw [line width=0.1mm, black!10] (5.5,2.5) circle (0.75);
\filldraw [line width=0.1mm, black!10] (7.5,1) circle (0.75);
\filldraw [line width=0.1mm, black!10] (7.5,4) circle (0.75);

\filldraw [line width=0.1mm, black!10] (12,2.5) circle (0.75);
\filldraw [line width=0.1mm, black!10] (15,2.5) circle (0.75);
\end{pgfonlayer}

\node at (1,1) {$G_1$};
\node at (1,4) {$G_3$};
\node at (3,2.5) {$G_2$};
\node at (5.5,2.5) {$G_4$};
\node at (7.5,1) {$G_6$};
\node at (7.5,4) {$G_5$};

\node at (12,2.5) {$G_1^{new}$};
\node at (15,2.5) {$G_2^{new}$};

\node at (3.15,0.5) {$G_1^{new}$};
\node at (5.3,0.5) {$G_2^{new}$};

\node at (10,2.5) {$\rightarrow$};

\node at (4.25,-0.8) {a) Graph $\hat{\Gamma}$};
\node at (13.5,-0.8) {b) Graph $\hat{\Gamma}_{new}$};

\draw [line width=0.75pt] (1.6,1.5) -- (2.44,2.07);
\draw [line width=0.75pt] (1.6,3.5) -- (2.44,3-0.07);

\draw [line width=0.75pt] (6.1,2.07) -- (7.5-0.56,1.5);
\draw [line width=0.75pt] (6.1,3-0.07) -- (7.5-0.56,3.5);

\draw [line width=0.75pt] (1,1.75) -- (1,3.25);
\draw [line width=0.75pt] (3.75,2.5) -- (4.75,2.5);
\draw [line width=0.75pt] (1.75,4) -- (6.75,4);

\draw [line width=0.75pt] (12.75,2.5) -- (14.25,2.5);


 \end{tikzpicture}
\caption{Example of grouping of vertices in $\hat{\Gamma}$ to obtain $\hat{\Gamma}_{new}$.}
\label{NewNetwork}
\end{figure}


If the system behind the original graph $\hat{\Gamma}$ is stable and admits an additive Lyapunov function, then the system represented with $\hat{\Gamma}_{new}$ admits an additive Laypunov function as well. 
Note that we can always partition networks with cycles in $\hat{\Gamma}$ to obtain $\hat{\Gamma}_{new}$ which is acyclic, and typically this partitioning can be done in several ways. If a network admits an additive Lyapunov function and $\hat{\Gamma}_{new}$ is an acyclic graph, we can use Theorem~\ref{TH1} to infer existence of interconnection neutral supply functions in connection to the edges of $\hat{\Gamma}_{new}$. 
A notable specific case of partitioning and the corresponding dissipativity characterizations are summarised in the following remark.    

\begin{remark}
\label{CorCycles}
Suppose $\hat{\Gamma}$ is an arbitrary graph, possibly containing cycles, and let $G_i$ be an arbitrary system from the corresponding network. Suppose that the considered dynamical network is stable and admits an additive quadratic Lyapunov function. We can view the network as interconnection of two systems $G_1^{new}$ and $G_2^{new}$, where $G_1^{new}:=G_i$ and $G_2^{new}$ is composed of all the remaining systems in the network. The corresponding graph $\hat{\Gamma}_{new}$ is acyclic, contains only two systems and we can apply Theorem~\ref{OpenSeparationProposition_53} and Corollary~\ref{Cor1}. In this way we infer existence of interconnection neutral supply rate defined as quadratic function of all interconneting signals between $G_i$ and the rest of the network.       
\end{remark}

\section{NEUTRAL SUPPLY FUNCTIONS AND ROBUSTNESS}
\label{Sec5}

\subsection{Stability robustness to connection/disconnection of an interconnection link}

Consider an LTI dynamical system $G$ with inputs $(v_A,v_B)$ and outputs $(w_A,w_B)$, as depicted in Figure~\ref{FigureLink}. Suppose that $v_A(t)\in \Rset^p$, $v_B(t)\in \Rset^q$ and $w_A(t)\in \Rset^q$, $w_B(t)\in \Rset^p$, that is, $v_{A}(t)$ and $w_{B}(t)$ are of the same dimension, while $v_{B}(t)$ and $w_{A}(t)$ are of the same dimension. Let $v:=\col(v_{A},v_{B})$, $w:=\col(w_{A},w_{B})$ and let $x$ denote the state vector of $G$.

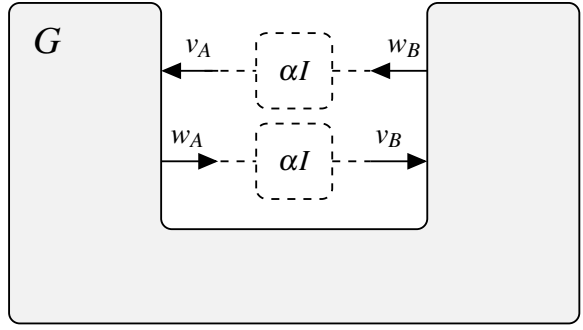
\begin{figure}[h!]
\centering
 \begin{tikzpicture}


\draw [line width=0.7pt, rounded corners] (0,2)--(0,5)--(2,5)--(2,2)--(5.5,2)--(5.5,5)--(7.5,5)--(7.5,0.75)--(0,0.75)--(0,2);


\draw [line width=0.7pt, ->, >= triangle 45, rounded corners] (2.75,4.1) -- (2,4.1);
\draw [line width=0.7pt, ->, >= triangle 45, rounded corners] (2,2.9) -- (2.75,2.9);

\draw [line width=0.7pt, ->, >= triangle 45, rounded corners] (5.5,4.1) -- (4.75,4.1);
\draw [line width=0.7pt, ->, >= triangle 45, rounded corners] (4.75,2.9) -- (5.5,2.9);

\draw [line width=0.7pt, dashed, rounded corners] (2.57,4.1) -- (3.25,4.1);
\draw [line width=0.7pt, dashed, rounded corners] (4.25,4.1) -- (4.75,4.1);
\draw [line width=0.7pt, dashed, rounded corners] (3.25,3.6) rectangle (4.25,4.6);

\draw [line width=0.7pt, dashed, rounded corners] (2.57,2.9) -- (3.25,2.9);
\draw [line width=0.7pt, dashed, rounded corners] (4.25,2.9) -- (4.75,2.9);
\draw [line width=0.7pt, dashed, rounded corners] (3.25,2.4) rectangle (4.25,3.4);



\begin{pgfonlayer}{background}
\filldraw [line width=0.1mm,rounded corners,black!5] (0,2)--(0,5)--(2,5)--(2,2)--(5.5,2)--(5.5,5)--(7.5,5)--(7.5,0.75)--(0,0.75)--(0,2);
\end{pgfonlayer}


\node at (3.75,2.9) {$\alpha I$};
\node at (3.75,4.1) {$\alpha I$};
\begin{Large}
\node at (0.5,4.5) {$G$};
\end{Large}
\node at (2.5,4.4) {$v_A$};
\node at (2.35,3.2) {$w_A$};
\node at (5.2,4.4) {$w_B$};
\node at (5,3.2) {$v_B$};

 \end{tikzpicture}
\caption{Dynamical system with an isolated interconnection link.}
\label{FigureLink}
\end{figure}

\begin{lemma}
\label{LemmaRobust}
Let the system $G$ be strictly dissipative with respect to a quadratic supply function $s(v,w)$ with an additive structure 
\begin{equation}
s(v,w)=s_A(v_A,w_A)+s_B(v_B,w_B)
\end{equation}
and with some corresponding quadratic storage function $V(x)=x^\top P x$. Suppose the following holds
\begin{enumerate}[a)]
\item $s_A(v_A,w_A)+s_B(v_B,w_B)=0$ for all $v_A=w_B$, $v_B=w_A$;
\item $s_A(0,w_A)\leq 0$ for all $w_A \neq 0$, $s_B(0,w_B)\leq 0$ for all $w_B \neq 0$;
\item $P \succ 0.$ 
\end{enumerate}
Then the system $G$ is stable and it remains stable if the following interconnection is made: $v_A=\alpha w_B$, $v_B=\alpha w_A$, for all $\alpha \in [0,1]$.
\end{lemma}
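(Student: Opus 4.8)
The plan is to reduce everything to the coefficient matrices of the two quadratic supplies and to exploit the neutrality and definiteness hypotheses purely algebraically. Following the convention of \eqref{QuadraticSupply}, I would write $s_A(v_A,w_A)=v_A^\top Q_A v_A+2v_A^\top S_A w_A+w_A^\top R_A w_A$ and $s_B(v_B,w_B)=v_B^\top Q_B v_B+2v_B^\top S_B w_B+w_B^\top R_B w_B$, with $Q_A,R_B\in\Rset^{p\times p}$, $R_A,Q_B\in\Rset^{q\times q}$ and $S_A,S_B^\top\in\Rset^{p\times q}$. Substituting the neutral interconnection $v_A=w_B$, $v_B=w_A$ into hypothesis (a) and requiring the resulting quadratic form to vanish for all $w_A,w_B$ yields, coefficient by coefficient, the identities $R_B=-Q_A$, $Q_B=-R_A$ and $S_B=-S_A^\top$. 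Hypothesis (b) then reads $R_A\preceq0$ (from $s_A(0,w_A)=w_A^\top R_A w_A\le0$) and $R_B\preceq0$, i.e.\ $Q_A\succeq0$ (from $s_B(0,w_B)=w_B^\top R_B w_B\le0$).

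The key computation is to evaluate the total supply along the scaled interconnection $v_A=\alpha w_B$, $v_B=\alpha w_A$. Inserting these into $s_A+s_B$ and using the three coefficient identities, the cross terms $2\alpha\,w_B^\top S_A w_A$ and $2\alpha\,w_A^\top S_B w_B$ cancel exactly, and the diagonal terms collapse to
\[
s_A(\alpha w_B,w_A)+s_B(\alpha w_A,w_B)=(1-\alpha^2)\bigl(w_A^\top R_A w_A - w_B^\top Q_A w_B\bigr).
\]
For $\alpha\in[0,1]$ we have $1-\alpha^2\ge0$, while $R_A\preceq0$ and $Q_A\succeq0$ force the parenthesised term to be nonpositive; hence this closed-loop supply is $\le0$ for every $\alpha\in[0,1]$ and all admissible outputs.

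Before invoking any Lyapunov argument I would first establish that the interconnection is well-posed for each $\alpha\in[0,1]$; this is where I expect the only real subtlety, and it turns out to fall out of strict dissipativity itself. If well-posedness failed there would exist a nonzero $v=\col(v_A,v_B)$ consistent with $x=0$ and with the interconnection constraint, i.e.\ a purely algebraic feed-through solution. For such a pair the left-hand side of the strict differential dissipation inequality, $\dot V=2x^\top P\dot x$, vanishes because $x=0$, whereas its right-hand side equals the closed-loop supply computed above and is therefore $\le0$; strict dissipativity would then require $0$ to be strictly smaller than a nonpositive number, a contradiction. Hence the algebraic-loop matrix is nonsingular and the closed loop is a well-defined autonomous LTI system $\dot x=\mathcal{A}_\alpha x$ for every $\alpha\in[0,1]$.

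Finally, for $x\neq0$ we have $\col(x,v)\neq0$, so strict dissipativity gives $\dot V(x)<s_A(\alpha w_B,w_A)+s_B(\alpha w_A,w_B)\le0$ along the closed loop. Together with $P\succ0$ from (c), which makes $V(x)=x^\top P x$ positive definite, $V$ is a strict Lyapunov function for $\mathcal{A}_\alpha$, establishing asymptotic stability for all $\alpha\in[0,1]$. The case $\alpha=0$ (inputs set to zero) is precisely the assertion that $G$ itself is stable, so both claims of the lemma follow from the single estimate above.
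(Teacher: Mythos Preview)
Your proof is correct and the core algebraic computation---extracting the coefficient identities $R_B=-Q_A$, $Q_B=-R_A$, $S_B=-S_A^\top$ from neutrality, then showing that along the scaled link the total supply collapses to $(1-\alpha^2)(w_A^\top R_A w_A - w_B^\top Q_A w_B)\le 0$---is exactly the same as the paper's. The paper arrives at the identical expression (written there as $\diag\bigl((1-\alpha^2)R,\,-(1-\alpha^2)Q\bigr)\succeq 0$ in its sign convention) and then invokes the full-block S-procedure result, Theorem~\ref{RobStability} from Appendix~A, with $\mathbf{H}=\bigl\{\bigl(\begin{smallmatrix}0&\alpha I\\ \alpha I&0\end{smallmatrix}\bigr):\alpha\in[0,1]\bigr\}$ as a black box to conclude robust stability (well-posedness included).

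You instead unfold that black box in place: you derive well-posedness directly from the contradiction $0<s\le 0$ at $x=0$, and then run the Lyapunov argument by hand. This buys self-containment---your argument does not require the reader to accept the S-procedure machinery---at the cost of a few extra lines. The paper's route is terser and makes explicit that the lemma is a special instance of the general multiplier framework used elsewhere in the paper. Substantively the two proofs are the same; the difference is packaging.
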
 
\begin{proof}The conditions (a) and (b) are satisfied if and only if $R \succeq 0$, $Q \preceq 0$ and the supply $s(v,w)$ is structured as follows 

\begin{equation}
s(v,w)=-\begin{pmatrix} v_{A} \\ v_{B} \\ \hdashline w_{A} \\ w_{B}  \end{pmatrix}^\top 
\underbrace{\left(
\begin{array}{c c : c c}
Q & 0 & S & 0 \\
0 & -R & 0 & -S^\top \\ \hdashline
S^\top & 0 & R & 0 \\
0 & -S & 0 & -Q
\end{array}
\right)}_\Pi
\begin{pmatrix} v_{A} \\ v_{B} \\ \hdashline w_{A} \\ w_{B}  \end{pmatrix}.
\end{equation}
 
Since for $\alpha \in [0,1]$ we have

\begin{equation}
\left(
\begin{array}{c }\star
\end{array}
 \right)^\top
 \Pi
\left(
\begin{array}{c c}
0 & \alpha I \\
\alpha I & 0 \\ \hdashline
I & 0 \\
0 & I 
\end{array}
 \right)=
 \begin{pmatrix}
 (1-\alpha^2)R & \\
 0 & -(1-\alpha^2)Q
 \end{pmatrix}\succeq 0
\end{equation}

the desired result follows directly from Theorem~\ref{RobStability} in Appendix~\ref{Appendix_A} with \[H = \begin{pmatrix} 0 & \alpha I \\ \alpha I & 0 \end{pmatrix} \in \mathbf{H}=\Big\{ \begin{pmatrix} 0 & \alpha I \\ \alpha I & 0 \end{pmatrix} \,\, : \alpha \in [0,1] \Big\}\]
\end{proof}
In Lemma~\ref{LemmaRobust}, the exposed interconnection link defined with an output-input pair $(w, v)$ does not need to be an interconnection link from an acyclic network. Indeed, the system $G$ is an arbitrary LTI system, with no specific structure required. Also note that no specific structure on the Laypunov function (i.e., on the matrix $P$) was imposed. 
In the following subsection, we will use the following convenient interpretation of this result. Suppose that some interconnection link in a system is characterized with an interconnection neutral supply function which satisfies the property (b) from Lemma~\ref{LemmaRobust} and with a positive definite storage function. Then the system is stable irrespective of whether the interconnection is present or not.    

\subsection{Acyclic networks with no algebraic loops}   
The following proposition is proved in Section~\ref{Sec6_C}.
\begin{proposition}
\label{Robust}
Consider LTI systems given by \eqref{Systems_53}.
\begin{enumerate}[(a)]
\item  Let $D_i=0$, $K_i=0$ and $L_i=0$, $i=1,2$. 
Then the statement of Theorem~\ref{OpenSeparationProposition_53} holds with full row rank assumption on $C_1$ and $C_2$ omitted and with some interconnection neutral supply functions $s_{1,int}$ and $s_{2,int}$ which satisfy the following condition
\begin{equation}
\label{EquNegative}
s_{1,int}(0,w_1)\leq 0,\,\, s_{2,int}(0,w_2)\leq 0 \,\,\, \text{for all} \, \,\, w_1\neq 0, w_2\neq 0. 
\end{equation}
\item Let $D_1=0$, $D_2=0$. Then Corollary~\ref{Cor1}  holds with full row rank assumption on $C_1$ and $C_2$ omitted and with some interconnection neutral supply functions $s_{1,int}$ and $s_{1,int}$ which satisfy \eqref{EquNegative}.
\end{enumerate}
\end{proposition}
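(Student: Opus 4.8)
The plan is to observe that the \emph{existence} of neutral supply functions making $G_1,G_2$ (resp.\ $\tilde G_1,\tilde G_2$) individually strictly dissipative, with the prescribed storages and without any rank hypothesis on $C_1,C_2$, is already delivered by Proposition~\ref{PropositionRank}, since here $D_1=D_2=0$. Thus the only genuinely new content is the sign condition \eqref{EquNegative}, and I would show it holds \emph{automatically} for any neutral supplies produced by that proposition. First I fix quadratic interconnection neutral supply functions
\[
s_{i,int}(v_i,w_i)=\begin{pmatrix} v_i \\ w_i\end{pmatrix}^{\top}\begin{pmatrix} Q_i & S_i \\ S_i^{\top} & R_i\end{pmatrix}\begin{pmatrix} v_i \\ w_i\end{pmatrix},\qquad i=1,2,
\]
and substitute the interconnection $v_1=w_2$, $v_2=w_1$ into the neutrality identity \eqref{NCondition}. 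Matching coefficients of the resulting quadratic form in the free variables $w_1,w_2$ forces
\[
Q_2=-R_1,\qquad R_2=-Q_1,\qquad S_2=-S_1^{\top}.
\]
In particular $s_{1,int}(0,w_1)=w_1^{\top}R_1 w_1$ and $s_{2,int}(0,w_2)=w_2^{\top}R_2 w_2=-w_2^{\top}Q_1 w_2$, so \eqref{EquNegative} is equivalent to the two matrix conditions $R_1\preceq 0$ and $Q_1\succeq 0$.

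The key step is to read off the signs of these blocks directly from strict dissipativity. I write the strict differential dissipation inequality for $G_i$, namely $\dot V_i<s_{i,ext}(d_i,z_i)+s_{i,int}(v_i,w_i)$ for all nonzero $\col(x_i,v_i,d_i)$, as positivity $\cM_i\succ 0$ of the single quadratic form associated with $s_i-\dot V_i$ (the LMI \eqref{DisLMI}), and I isolate its $(v_i,v_i)$ diagonal block. This is where the hypotheses enter: with $D_i=0$ the output $w_i=C_i x_i$ has no direct dependence on $v_i$, and with $K_i=0$, $L_i=0$ we have $z_i=F_i x_i$, so $s_{i,ext}(d_i,z_i)$ likewise contains no $v_i$; moreover $\dot V_i=2x_i^{\top}P_i(A_i x_i+B_i v_i+E_i d_i)$ contributes only terms bilinear in $(x_i,v_i)$. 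Hence the sole purely-$v_i$ quadratic contribution comes from $s_{i,int}$, and the $(v_i,v_i)$ block of $\cM_i$ is \emph{exactly} $Q_i$. Since any diagonal block of a positive definite matrix is positive definite, $\cM_i\succ 0$ gives $Q_1\succ 0$ and $Q_2\succ 0$; the neutrality relations then yield $R_1=-Q_2\prec 0$ and $R_2=-Q_1\prec 0$. Consequently $s_{1,int}(0,w_1)=w_1^{\top}R_1 w_1<0$ and $s_{2,int}(0,w_2)=w_2^{\top}R_2 w_2<0$ for all $w_1\neq 0$, $w_2\neq 0$, which is precisely \eqref{EquNegative}. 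This establishes part (a).

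For part (b) the same argument runs verbatim with the exogenous signals deleted: Proposition~\ref{PropositionRank} again furnishes neutral supplies making each $\tilde G_i$ strictly dissipative, the form $\cM_i\succ 0$ now lives on $\col(x_i,v_i)$, its $(v_i,v_i)$ block is again $Q_i$ (using only $D_i=0$), and the identical chain of implications delivers \eqref{EquNegative}. I expect the block bookkeeping and the coefficient matching in the neutrality identity to be entirely routine; the one point that must be handled with care — and the place where the structural hypotheses are genuinely used — is the block-isolation step. If $K_i$ were nonzero, then $z_i$, and hence $s_{i,ext}(d_i,z_i)$, would feed $v_i$ into the $(v_i,v_i)$ block of $\cM_i$, so that $\cM_i\succ 0$ would no longer pin down the sign of $Q_i$; it is exactly the assumptions $D_i=0$ and $K_i=0$ (with $L_i=0$ kept for a clean $z_i=F_i x_i$) that make the $(v_i,v_i)$ block equal to $Q_i$ and thereby force \eqref{EquNegative}.
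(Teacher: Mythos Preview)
Your argument is correct and takes a genuinely different, more elementary route than the paper.

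The paper works with the \emph{specific} multiplier $\Pi_D$ constructed in the proof of Theorem~\ref{OpenSeparationProposition_53}: it traces through the intermediate quantities $\cR,\cQ$ from \eqref{cc}, proves $\cQ\succeq 0$ by analyzing $\breve Q_I,\breve Q_{II}$, and proves $\cR\preceq 0$ by invoking an auxiliary robustness lemma (Lemma~\ref{Lemma1} in Appendix~B) that extends the multiplier to the family $H(\alpha)=\left(\begin{smallmatrix}0&\alpha I\\ \alpha I&0\end{smallmatrix}\right)$, $\alpha\in[0,1]$; it then handles the rank relaxation separately by checking that the extension procedure of Proposition~\ref{PropositionRank} preserves the sign via a suitable choice of $\gamma_Q,\gamma_R$. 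By contrast, you first call Proposition~\ref{PropositionRank} to obtain \emph{some} neutral supplies and then observe that the sign condition is forced \emph{a posteriori}: with $D_i=0$ (and, in part (a), $K_i=L_i=0$) the $(v_i,v_i)$ block of the local strict-dissipativity LMI equals the $Q_i$-block of the supply, so strictness gives $Q_i\succ 0$, and neutrality turns this into $R_1=-Q_2\prec 0$, $R_2=-Q_1\prec 0$. This bypasses Lemma~\ref{Lemma1}, the $\cR/\cQ$ bookkeeping, and the separate rank-relaxation step, and it actually proves something slightly stronger: \emph{every} pair of neutral supplies with the required strict local dissipativity satisfies \eqref{EquNegative} (indeed strictly), not just the particular one the paper constructs. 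The paper's route, on the other hand, keeps explicit control over the constructed $\Pi_D$ and the intermediate objects $\cR,\cQ$, which ties in with the rest of its constructive machinery.
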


The following result follows directly from Proposition~\ref{Robust}, Theorem~\ref{TH1} and Lemma~\ref{LemmaRobust}.

\begin{corollary}
\label{CorROB}
Consider a dynamical network which belongs to the case (2) from Theorem~\ref{TH1} and suppose it admits an additive Lyapunov function of the form  \eqref{AdditiveLyap}. Then the network is robustly stable with respect to removal (disconnection) of an arbitrary edge from $\hat{\Gamma}$.
\end{corollary}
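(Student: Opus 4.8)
The plan is to reduce this network-level claim to the single-link robustness statement of Lemma~\ref{LemmaRobust}, applied across the edge that is to be removed. Fix an arbitrary edge $e=(G_i,G_j)\in\hat{E}$. We may assume $\hat{\Gamma}$ is connected, since removing $e$ leaves every other connected component of $\hat{\Gamma}$ untouched, and each such component retains its own additive Lyapunov certificate; the overall network is stable precisely when the component affected by $e$ is. Because $\hat{\Gamma}$ is acyclic, removing $e$ splits it into exactly the two pieces $\hat{\Gamma}_+(\hat{\Gamma},e)$ and $\hat{\Gamma}_-(\hat{\Gamma},e)$ from the proof of Theorem~\ref{TH1}. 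Grouping each piece into one LTI system, I would view the whole network as an interconnection of two systems $G_A$ (corresponding to $\hat{\Gamma}_+$, containing $G_i$) and $G_B$ (corresponding to $\hat{\Gamma}_-$, containing $G_j$), interconnected over the single edge $e$, exactly as in Remark~\ref{CorCycles}.

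First I would verify that this two-system view still falls under case (2), i.e.\ that the cut port carries no direct feed-through. Since the network is in case (2), the only feed-through touching the $e$-channel at $G_i$ is $D_{ij}=0$, and by the structure of \eqref{GiNetwork} there is no feed-through between distinct interconnection ports of a single system; hence the composite feed-through from $v_{ij}$ to $w_{ij}$ through $G_A$, and likewise from $v_{ji}$ to $w_{ji}$ through $G_B$, vanishes. Thus $G_A$ and $G_B$, realized with the $e$-port as their single interconnection port, have zero feed-through on that port, and the interconnection is automatically well-posed.

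Next I would invoke Proposition~\ref{Robust}(b). The autonomous network admits the additive Lyapunov function \eqref{AdditiveLyap}, which regrouped reads $V=V_A+V_B$ with $V_A=\sum_{p\in\hat{\Gamma}_+}V_p(x_p)$ and $V_B=\sum_{p\in\hat{\Gamma}_-}V_p(x_p)$; both are positive definite since each $P_p\succ0$. Applying Proposition~\ref{Robust}(b) (Corollary~\ref{Cor1} strengthened for $D=0$, with the full-row-rank hypothesis dropped) to the two-system interconnection across $e$ yields interconnection neutral supply functions $s_{ij},s_{ji}$ on $e$, having $V_A,V_B$ as their respective storage functions, satisfying neutrality $s_{ij}+s_{ji}=0$ together with the negativity property \eqref{EquNegative}, namely $s_{ij}(0,w_{ij})\le0$ and $s_{ji}(0,w_{ji})\le0$.

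Finally, I would apply Lemma~\ref{LemmaRobust} to the system $G$ obtained by combining $G_A$ and $G_B$ with the $e$-port left exposed. Identifying $(v_A,w_A)$ with $(v_{ij},w_{ij})$ and $(v_B,w_B)$ with $(v_{ji},w_{ji})$, the combined system is strictly dissipative with respect to $s_{ij}+s_{ji}$ with positive definite storage $V_A+V_B$, and conditions (a), (b), (c) of Lemma~\ref{LemmaRobust} are exactly neutrality, \eqref{EquNegative}, and $P\succ0$. Lemma~\ref{LemmaRobust} then gives stability for all $\alpha\in[0,1]$, in particular for $\alpha=1$ (edge $e$ present) and $\alpha=0$ (edge $e$ removed); since $e$ was arbitrary, the network is robustly stable with respect to removal of any single edge. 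The main obstacle I expect lies in the bookkeeping of the first two steps, that is, confirming that the grouped systems $G_A,G_B$ genuinely inherit the case-(2) zero-feed-through structure on the cut port and that the regrouped storage functions stay positive definite, so that Proposition~\ref{Robust}(b) applies verbatim; the concluding appeal to Lemma~\ref{LemmaRobust} is then immediate.
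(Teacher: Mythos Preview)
Your proposal is correct and follows essentially the same route as the paper: split the acyclic network along the chosen edge into two subsystems, invoke Proposition~\ref{Robust}(b) to obtain interconnection neutral supplies with the negativity property \eqref{EquNegative} and positive definite storages, and then apply Lemma~\ref{LemmaRobust} to conclude stability for $\alpha\in[0,1]$. The paper merely states that the result follows directly from Proposition~\ref{Robust}, Theorem~\ref{TH1} and Lemma~\ref{LemmaRobust}; you have simply unpacked this one-liner, and your verification that the grouped systems $G_A,G_B$ inherit zero feed-through on the cut port (since all $D_{ij}=0$ and \eqref{GiNetwork} has no cross-port feed-through) is the right bookkeeping.
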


\subsection{Networks with cycles and no algebraic loops}

The following result follows directly from Remark~\ref{CorCycles}, Proposition~\ref{Robust} and Lemma~\ref{LemmaRobust}.
\begin{corollary}
\label{CorROBcycles}
Consider an arbitrary dynamical network defined in Section~\ref{Sec4_A} (hence graph $\hat{\Gamma}$ can contain cycles) in which dynamics of the system $G_i$ is given by \eqref{GiNetwork} with $D_{ij}=0$ for all $(i,j)$ such that $(G_i,G_j)\in E$. Suppose that the network admits an additive Lyapunov function of the form  \eqref{AdditiveLyap}. Then the network is robustly stable with respect to removal (disconnection) of the system $G_i$ from the network\footnote{By this we mean removal of the system $G_i$ together with all edges $(G_i,G_j)\in E$ from the graph $\Gamma$.}.  
\end{corollary}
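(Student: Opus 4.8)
The plan is to reduce the problem, by means of the grouping construction of Remark~\ref{CorCycles}, to a single aggregate interconnection link between two systems, and then to apply the robustness statement of Lemma~\ref{LemmaRobust} fed with the sign-definite neutral supply functions supplied by Proposition~\ref{Robust}(b). First I would set $G_1^{new}:=G_i$ and let $G_2^{new}$ be the subnetwork formed by all remaining systems $G_p$, $p\neq i$, with their mutual interconnections closed. Since the network is autonomous and admits the additive Lyapunov function \eqref{AdditiveLyap} (so it is stable), Remark~\ref{CorCycles} guarantees that the aggregated graph $\hat{\Gamma}_{new}$ is acyclic with exactly two vertices joined by the single aggregate edge collecting all link signals between $G_i$ and the rest. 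I would stack these into $w_A:=\col(w_{ij})_{j\in\cN_i}$, $v_A:=\col(v_{ij})_{j\in\cN_i}$ for $G_i$, and into $w_B:=\col(w_{ji})_{j\in\cN_i}$, $v_B:=\col(v_{ji})_{j\in\cN_i}$ for $G_2^{new}$, so that the interconnection constraints become $v_A=w_B$, $v_B=w_A$.

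Next I would verify the hypotheses of Proposition~\ref{Robust}(b) for this two-system view. The essential observation is that the aggregate direct feed-through matrices of $G_1^{new}$ and $G_2^{new}$ both vanish: by the structure \eqref{GiNetwork} with every $D_{ij}=0$, no individual subsystem transmits any input instantaneously to any output, hence there is no algebraic path across the grouping cut and the composite feed-through is zero irrespective of the cycles now hidden inside $G_2^{new}$. Thus the grouped pair falls into the $D_1=D_2=0$ situation, and Proposition~\ref{Robust}(b) produces interconnection neutral supply functions $s_A:=s_{1,int}(v_A,w_A)$ and $s_B:=s_{2,int}(v_B,w_B)$ satisfying both the neutrality condition and the sign condition \eqref{EquNegative}, with $G_i$ strictly dissipative with respect to $s_A$ and storage $V_i(x_i)=x_i^\top P_i x_i$, and $G_2^{new}$ strictly dissipative with respect to $s_B$ and storage $\sum_{p\neq i}V_p(x_p)$.

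Finally I would assemble the open system $G$ required by Lemma~\ref{LemmaRobust} as the parallel combination of $G_i$ and $G_2^{new}$ with the aggregate link left exposed. Summing the two differential dissipation inequalities shows that $G$ is strictly dissipative with respect to $s(v,w)=s_A(v_A,w_A)+s_B(v_B,w_B)$ with quadratic storage $V(x)=\sum_p V_p(x_p)=x^\top P x$, where $P=\diag(P_1,\ldots,P_N)\succ 0$. Hypotheses (a)--(c) of Lemma~\ref{LemmaRobust} therefore hold, and the lemma guarantees that the interconnection $v_A=\alpha w_B$, $v_B=\alpha w_A$ is stable for every $\alpha\in[0,1]$. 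The value $\alpha=1$ recovers the original stable network, while $\alpha=0$ sets $v_A=0$, $v_B=0$, decoupling $G_i$ from the remainder; the resulting dynamics are block diagonal in $x_i$ and $(x_p)_{p\neq i}$, so stability at $\alpha=0$ forces the subnetwork on $(x_p)_{p\neq i}$ to be stable by itself, which is exactly stability of the network after removal of $G_i$ together with its incident edges.

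I expect the main obstacle to be the bookkeeping of the reduction rather than any deep analytic difficulty: one must confirm that the cycles of the original network, now entirely internal to $G_2^{new}$, neither reintroduce an algebraic loop across the cut (prevented by $D_{ij}=0$, which also ensures well-posedness) nor disturb the block-diagonal positive definiteness of $P$ inherited from \eqref{AdditiveLyap}, and one must correctly identify ``removal of $G_i$'' with the $\alpha=0$ endpoint of the homotopy rather than with the disconnection of a single edge as in Corollary~\ref{CorROB}.
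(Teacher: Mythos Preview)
Your proposal is correct and follows exactly the route the paper itself sketches: it invokes Remark~\ref{CorCycles} to collapse the network to a two-vertex acyclic picture, applies Proposition~\ref{Robust}(b) (the $D_1=D_2=0$ case) to obtain neutral supplies with the sign property~\eqref{EquNegative}, and then feeds these into Lemma~\ref{LemmaRobust}, reading off removal of $G_i$ as the $\alpha=0$ endpoint. You have supplied more detail than the paper's one-line justification, but the logical skeleton is identical.
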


Simpler, alternative approach to prove Corollary~\ref{CorROB} and Corollary~\ref{CorROBcycles}, which avoids proving existence of interconnection neutral supply rates (Theorem~\ref{TH1}) and their characterizations (Proposition~\ref{Robust}), is presented in \cite{JokicNakic_MTNS}. Still we believe that the characterizations and insights obtained from the proofs based on existence of interconnection neutral supply rates are of independent interest.

%
%
%

\section{EXAMPLE}
\label{Example}

Numerical example presented in this section is based on the example from Section~VI in \cite{Ebihara}. There a DC-grid is presented, which is here illustrated in Figure~\ref{FigExample1}. The grid consists of a DC voltage source $E$, two resistive loads with resistances $R_{L1}$ and $R_{L3}$ and a battery. Each of three transmission lines of the network has resistance $R$ and inductance $L$.  


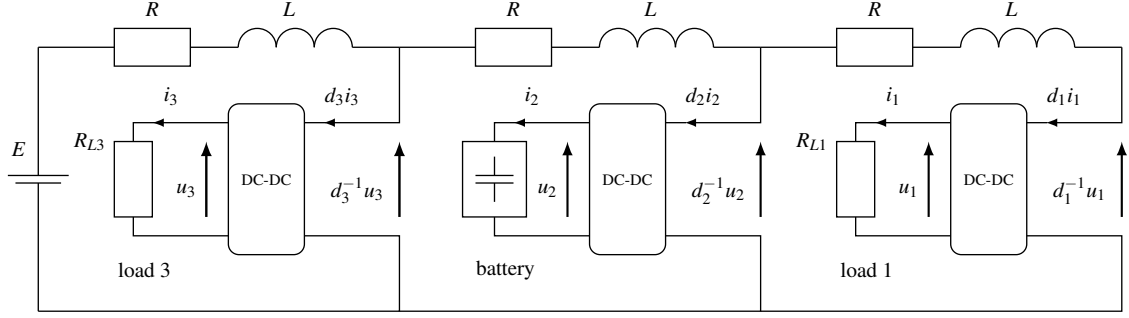
\begin{figure}[h!]
\centering
 \begin{tikzpicture}

\draw [line width=0.5pt] (0.5,4.75)--(1.5, 4.75);
\draw [line width=0.5pt] (1.5,4.5) rectangle (2.5,5);
\draw [line width=0.5pt] (3.63,4.75) arc (0:180:0.25);
\draw [line width=0.5pt] (4.13,4.75) arc (0:180:0.25);
\draw [line width=0.5pt] (4.63,4.75) arc (0:180:0.25);
\draw [line width=0.5pt] (2.5,4.75)--(3.13, 4.75);
\draw [line width=0.5pt] (4.63,4.75)--(5.25, 4.75);
\draw [line width=0.5pt, rounded corners] (3,2) rectangle (4,4);
\draw [line width=0.5pt, ->, >= latex] (5.25,4.75) -- (5.25,3.75) -- (4.25,3.75);
\draw [line width=0.5pt] (4.29,3.75)--(4, 3.75);
\draw [line width=0.5pt, ->, >= latex] (3,3.75) -- (2,3.75);
\draw [line width=0.5pt] (2.1,3.75)--(1.75, 3.75)--(1.75,3.5);
\draw [line width=0.5pt] (1.5,2.5) rectangle (2,3.5);
\draw [line width=0.5pt] (1.75,2.5)--(1.75, 2.25)--(3,2.25);
\draw [line width=0.5pt] (4,2.25)--(5.25, 2.25)--(5.25,1.25)--(0.5,1.25);
\draw [line width=0.9pt, ->, >= latex] (5.25,2.5) -- (5.25,3.5);
\draw [line width=0.9pt, ->, >= latex] (2.71,2.5) -- (2.71,3.5);
\draw [line width=0.5pt] (0.5,1.25)--(0.5, 3-0.05);
\draw [line width=0.5pt] (0.5,3+0.05)--(0.5, 4.75);
\draw [line width=0.5pt] (0.5-0.3,3-0.05)--(0.5+0.3, 3-0.05);
\draw [line width=0.5pt] (0.5-0.4,3+0.05)--(0.5+0.4, 3+0.05);
\node at (2,5.25) {\begin{scriptsize}$R$\end{scriptsize}};
\node at (3.8,5.25) {\begin{scriptsize}$L$\end{scriptsize}};
\node at (0.25,3.4) {\begin{scriptsize}$E$\end{scriptsize}};
\node at (2.25,4.1) {\begin{scriptsize}$i_3$\end{scriptsize}};
\node at (4.5,4.1) {\begin{scriptsize}$d_3 i_3$\end{scriptsize}};
\node at (4.7,2.85) {\begin{scriptsize}$d_3^{-1} u_3$\end{scriptsize}};
\node at (2.45,2.85) {\begin{scriptsize}$u_3$\end{scriptsize}};
\node at (1.18,3.5) {\begin{scriptsize}$R_{L3}$\end{scriptsize}};
\node at (3.5,3) {\begin{tiny}DC-DC\end{tiny}};
\node at (1.9,1.8) {\begin{scriptsize}load 3\end{scriptsize}};
\draw [line width=0.5pt] (0.5+4.75,4.75)--(1.5+4.75, 4.75);
\draw [line width=0.5pt] (1.5+4.75,4.5) rectangle (2.5+4.75,5);
\draw [line width=0.5pt] (3.63+4.75,4.75) arc (0:180:0.25);
\draw [line width=0.5pt] (4.13+4.75,4.75) arc (0:180:0.25);
\draw [line width=0.5pt] (4.63+4.75,4.75) arc (0:180:0.25);
\draw [line width=0.5pt] (2.5+4.75,4.75)--(3.13+4.75, 4.75);
\draw [line width=0.5pt] (4.63+4.75,4.75)--(5.25+4.75, 4.75);
\draw [line width=0.5pt, rounded corners] (3+4.75,2) rectangle (4+4.75,4);
\draw [line width=0.5pt, ->, >= latex] (5.25+4.75,4.75) -- (5.25+4.75,3.75) -- (4.25+4.75,3.75);
\draw [line width=0.5pt] (4.27+4.75,3.75)--(4+4.75, 3.75);
\draw [line width=0.5pt, ->, >= latex] (3+4.75,3.75) -- (2+4.75,3.75);
\draw [line width=0.5pt] (2.1+4.75,3.75)--(1.75+4.75, 3.75)--(1.75+4.75,3.5);
\draw [line width=0.5pt] (1.33+4.75,2.5) rectangle (2.17+4.75,3.5);
\draw [line width=0.5pt] (6.5-0.25,3-0.05)--(6.5+0.25, 3-0.05);
\draw [line width=0.5pt] (6.5-0.25,3+0.05)--(6.5+0.25, 3+0.05);
\draw [line width=0.5pt] (6.5,3+0.05)--(6.5, 3+0.05+0.25);
\draw [line width=0.5pt] (6.5,3-0.05)--(6.5, 3-0.05-0.25);
\draw [line width=0.5pt] (1.75+4.75,2.5)--(1.75+4.75, 2.25)--(3+4.75,2.25);
\draw [line width=0.5pt] (4+4.75,2.25)--(5.25+4.75, 2.25)--(5.25+4.75,1.25)--(0.5+4.75,1.25);
\draw [line width=0.8pt, ->, >= latex] (5.25+4.75,2.5) -- (5.25+4.75,3.5);
\draw [line width=0.8pt, ->, >= latex] (2.71+4.75,2.5) -- (2.71+4.75,3.5);
\node at (2+4.75,5.25) {\begin{scriptsize}$R$\end{scriptsize}};
\node at (3.8+4.75,5.25) {\begin{scriptsize}$L$\end{scriptsize}};
\node at (2.25+4.75,4.1) {\begin{scriptsize}$i_2$\end{scriptsize}};
\node at (4.5+4.75,4.1) {\begin{scriptsize}$d_2 i_2$\end{scriptsize}};
\node at (4.7+4.75,2.85) {\begin{scriptsize}$d_2^{-1} u_2$\end{scriptsize}};
\node at (2.45+4.75,2.85) {\begin{scriptsize}$u_2$\end{scriptsize}};
\node at (3.5+4.75,3) {\begin{tiny}DC-DC\end{tiny}};
\node at (1.9+4.75,1.8) {\begin{scriptsize}battery\end{scriptsize}};
\draw [line width=0.5pt] (0.5+9.5,4.75)--(1.5+9.5, 4.75);
\draw [line width=0.5pt] (1.5+9.5,4.5) rectangle (2.5+9.5,5);
\draw [line width=0.5pt] (3.63+9.5,4.75) arc (0:180:0.25);
\draw [line width=0.5pt] (4.13+9.5,4.75) arc (0:180:0.25);
\draw [line width=0.5pt] (4.63+9.5,4.75) arc (0:180:0.25);
\draw [line width=0.5pt] (2.5+9.5,4.75)--(3.13+9.5, 4.75);
\draw [line width=0.5pt] (4.63+9.5,4.75)--(5.25+9.5, 4.75);
\draw [line width=0.5pt, rounded corners] (3+9.5,2) rectangle (4+9.5,4);
\draw [line width=0.5pt, ->, >= latex] (5.25+9.5,4.75) -- (5.25+9.5,3.75) -- (4.25+9.5,3.75);
\draw [line width=0.5pt] (4.27+9.5,3.75)--(4+9.5, 3.75);
\draw [line width=0.5pt, ->, >= latex] (3+9.5,3.75) -- (2+9.5,3.75);
\draw [line width=0.5pt] (2.1+9.5,3.75)--(1.75+9.5, 3.75)--(1.75+9.5,3.5);
\draw [line width=0.5pt] (1.5+9.5,2.5) rectangle (2+9.5,3.5);
\draw [line width=0.5pt] (1.75+9.5,2.5)--(1.75+9.5, 2.25)--(3+9.5,2.25);
\draw [line width=0.5pt] (4+9.5,2.25)--(5.25+9.5, 2.25)--(5.25+9.5,1.25)--(0.5+9.5,1.25);
\draw [line width=0.8pt, ->, >= latex] (5.25+9.5,2.5) -- (5.25+9.5,3.5);
\draw [line width=0.8pt, ->, >= latex] (2.71+9.5,2.5) -- (2.71+9.5,3.5);
\node at (2+9.5,5.25) {\begin{scriptsize}$R$\end{scriptsize}};
\node at (3.8+9.5,5.25) {\begin{scriptsize}$L$\end{scriptsize}};
\node at (2.25+9.5,4.1) {\begin{scriptsize}$i_1$\end{scriptsize}};
\node at (4.5+9.5,4.1) {\begin{scriptsize}$d_1 i_1$\end{scriptsize}};
\node at (4.7+9.5,2.85) {\begin{scriptsize}$d_1^{-1} u_1$\end{scriptsize}};
\node at (2.45+9.5,2.85) {\begin{scriptsize}$u_1$\end{scriptsize}};
\node at (1.18+9.5,3.5) {\begin{scriptsize}$R_{L1}$\end{scriptsize}};
\node at (3.5+9.5,3) {\begin{tiny}DC-DC\end{tiny}};
\node at (1.9+9.5,1.8) {\begin{scriptsize}load 1\end{scriptsize}};

 \end{tikzpicture}
\caption{An example of a DC-grid, from \cite{Ebihara}.}
\label{FigExample1}
\end{figure}
The voltage and the current of load $1$, the battery, and the load $3$ are denoted by $(u_1, i_1)$, $(u_2, i_2)$ and $(u_3, i_3)$, respectively. Furthermore, load $1$, the battery and load $3$ are equipped with ideal DC-DC converters with voltage gains $d_1, d_2$ and $d_3$, respectively. The overall network is seen as an interconnection of $3$ dynamical systems, as presented in Figure~\ref{FigExample2} and as described next.

The first system $G_1$ consists of load $1$, its converter and the transmission line. This is a first order system with a single state $x_1:=i_1$, which is the current over the corresponding transmission inductance. Input to $G_1$ is voltage $u_2$, and output of $G_1$ is voltage $u_1$.

The second system $G_2$ consists of the battery, its converter and a transmission line. The sate vector of this system is $x_2:=\col(s, i_2)$, where $s$ denotes the state of charge of the battery and is assumed to be linear with respect to $u_2$, that is, $s=Ku_2$ where $K$ is a given constant. The dynamics of the battery is modelled as $\dot{s}=i_2$. Input to $G_2$ is $\col(u_1, u_3)$, while the output is $\col(u_2, u_2)$, as presented in Figure~\ref{FigExample2}. 

The third system $G_3$ consists of load $3$, its converter, a transmission line and the voltage source $E$. Here the state vector is $x_3:=i_3$, the input is $u_2$ and the output is $u_3$.


\begin{figure}[h!]
\centering
 \begin{tikzpicture}[scale=1]
 
\draw [line width=0.7pt, rounded corners] (1.5,1) rectangle (2.5,2);
\draw [line width=0.7pt, rounded corners] (4,1) rectangle (5,2);
\draw [line width=0.7pt, rounded corners] (6.5,1) rectangle (7.5,2);

\draw [line width=0.5pt, ->, >= triangle 45, rounded corners] (4,1.75) -- (2.5,1.75);
\draw [line width=0.5pt, ->, >= triangle 45, rounded corners] (2.5,1.25) -- (4,1.25);

\draw [line width=0.5pt, ->, >= triangle 45, rounded corners] (6.5,1.75) -- (5,1.75);
\draw [line width=0.5pt, ->, >= triangle 45, rounded corners] (5,1.25) -- (6.5,1.25);


\node at (3.25,1.95) {$u_2$};
\node at (3.25,1.05) {$u_1$};
\node at (5.75,1.95) {$u_3$};
\node at (5.75,1.05) {$u_2$};

\node at (2,1.5) {$G_1$};
\node at (4.5,1.5) {$G_2$};
\node at (7,1.5) {$G_3$};



\begin{pgfonlayer}{background}
\filldraw [line width=0.1mm,rounded corners,black!5] (1.5,1) rectangle (2.5,2);
\end{pgfonlayer}

\begin{pgfonlayer}{background}
\filldraw [line width=0.1mm,rounded corners,black!5] (4,1) rectangle (5,2);
\end{pgfonlayer}

\begin{pgfonlayer}{background}
\filldraw [line width=0.1mm,rounded corners,black!5] (6.5,1) rectangle (7.5,2);
\end{pgfonlayer}

 \end{tikzpicture}
\caption{Interconnection of systems in the DC-grid}
\label{FigExample2}
\end{figure}
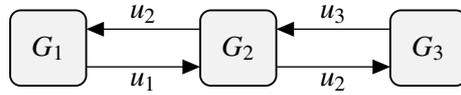
With the above descriptions, the state-space realizations of the three systems are given as follows:

\begin{equation}
G_1=\left[\begin{array}{c | c} -\frac{R+d_1^{-2}R_{L1}}{L} & \frac{d_1^{-1}d_2^{-1}}{L} \\ \hline R_{L1} & 0 \end{array} \right],\, G_3=\left[\begin{array}{c | c} -\frac{R+2d_3^{-2}R_{L3}}{L} & \frac{d_2^{-1}d_3^{-1}}{L} \\ \hline R_{L3} & 0 \end{array} \right], \nonumber
\end{equation}
\begin{equation}
G_2=\left[\begin{array}{c c | c c} 0 & 1 & 0 & 0 \\ 
-\frac{2d_2^{-2}}{KL} & -\frac{R}{L} & \frac{d_1^{-1}d_2^{-1}}{L} & \frac{d_2^{-1}d_3^{-1}}{L}\\ \hline 
\frac{1}{K} & 0 & 0 & 0 \\ 
\frac{1}{K} & 0 & 0 & 0  \end{array} \right]. \nonumber
\end{equation}

Note that in the above models, and in Figure~\ref{FigExample2}, we do not consider $E$ as an exogenous input. Instead we assume $E$ to be a constant signal while the values of the states and the interconnecting signals denote their deviations from  the corresponding equilibrium values. Numerical values of the parameters are as well taken from \cite{Ebihara} and are as follows: $R=1\, \Omega$, $L = 10^{-3}\, \text{H}$, $K=10\, \text{F}$, $R_{L1}=R_{L3}=20\, \Omega$. $(d_1,d_2,d_3)=(0.3953, 0.1634, 0.7785)$.

It can easily be verified that the considered dynamical network is stable and admits an additive quadratic Lyapunov function $V(x_1,x_2,x_3)=V_1(x_1)+V_2(x_2)+V_3(x_3)$, where 

\begin{align}
V_1(x_1)&=3.3282\,\, i_1^2, \,\,
V_2(x_2)=\begin{pmatrix} s \\ i_2 \end{pmatrix}^\top
\begin{pmatrix} 14.3127 & 0.0261 \\ 0.0261 & 0.0069 \end{pmatrix}
\begin{pmatrix} s \\ i_2 \end{pmatrix}, \nonumber \\
V_3(x_3)&=2.3523\,\, i_3^2. \nonumber
\end{align}

Theorem~\ref{TH1} states that there exist interconnection neutral supply functions defined on the interconnection links. We have applied the iterative procedure described in the proof of Theorem~\ref{TH1} to construct a set of such supply functions. We have that $G_1$ is strictly dissipative w.r.t.\ supply function $s_{12}(u_2,u_1)$ with the storage $V_1(x_1)$; the system $G_2$ is strictly dissipative w.r.t. $s_{21}(u_1,u_2)+s_{23}(u_3,u_2)$ with the storage $V_2(x_2)$; and the system $G_3$ is strictly dissipative w.r.t. $s_{32}(u_2,u_3)$ with the storage $V_3(x_3)$, where
%
%

\begin{align}
s_{12}(u_2,u_1)&=\begin{pmatrix} u_2 \\ u_1 \end{pmatrix}^\top
\begin{pmatrix} 4754.6 & 1543.5 \\ 1543.5 & -1637.6  \end{pmatrix}
\begin{pmatrix} u_2 \\ u_1 \end{pmatrix},\\
s_{21}(u_1,u_2)&=\begin{pmatrix} u_1 \\ u_2 \end{pmatrix}^\top
\begin{pmatrix} 1637.6 & -1543.5 \\ -1543.5 & -4754.6  \end{pmatrix}
\begin{pmatrix} u_1 \\ u_2 \end{pmatrix}, \\
s_{23}(u_3,u_2)&= \begin{pmatrix} u_3 \\ u_2 \end{pmatrix}^\top 
\begin{pmatrix} 608 & -506.8 \\ -506.8 & -2298.6  \end{pmatrix}
\begin{pmatrix} u_3 \\ u_2 \end{pmatrix}, \\
s_{32}(u_2,u_3)&=\begin{pmatrix} u_2 \\ u_3 \end{pmatrix}^\top
\begin{pmatrix} 2298.6 & 506.8 \\ 506.8 & -608  \end{pmatrix}
\begin{pmatrix} u_2 \\ u_3 \end{pmatrix}.
\end{align}

Note that $s_{12}(u_2,u_1)+s_{21}(u_1,u_2)=0$ and $s_{23}(u_3,u_2)+s_{32}(u_2,u_3)=0$. 
We remark that in the calculation of the supply functions according to the constructive procedure from the proof of Theorem~\ref{OpenSeparationProposition_53}, we have chosen $\alpha=0.5$ in \eqref{MultNew}. 

\emph{A posteriori} tests, based on verification of the corresponding strict dissipativity LMIs of the form \eqref{DisLMI}, indeed confirm that all three systems are strictly dissipative w.r.t.\ supply functions and with the storage functions as descried above in the text. Note that $s_{12}(0,u_1)<0$ for all $u_1 \neq 0$; $s_{21}(0,u_2)+s_{23}(0,u_2) < 0$ for all $u_2 \neq 0$ and $s_{32}(0,u_3)<0$ for all $u_3 \neq 0$, which is in accordance with the statements of Proposition~\ref{Robust}. Finally, it is easily verified that the network stability is robust w.r.t.\ removal of the interconnection links, what is in conformity with Corollary~\ref{CorROB}.

%

%
%
%





\section{PROOFS}
\label{Sec6}

This section contains proofs of the novel results presented in this paper. Some of the proofs make use of the non-conservative robust stability and robust dissipativity characterizations based on the full block S-procedure \cite{SchererLPV, SchererFromBook}, which are suitably summarized in Appendix~\ref{Appendix_A}.

\subsection{Proof of Theorem~\ref{OpenSeparationProposition_53}}
\label{Sec6_A}
\medskip

\begin{proof}[\textbf{Step 0: \emph{Starting point, main idea and overview}}]
The system $G$ can be presented as system $G_0$ with an interconnection matrix $H=\left(\begin{smallmatrix} 0 & I \\ I & 0 \end{smallmatrix}\right)$ in a feedback loop, as illustrated in Figure~\ref{RobPerf}b in Appendix~\ref{Appendix_A}, where the system $G_0$ is presented in Figure~\ref{G0}. 


\begin{figure}[h!]
\centering
 \begin{tikzpicture}
 
\draw [line width=0.7pt, rounded corners] (1.5,1) rectangle (2.5,2);
\draw [line width=0.7pt, rounded corners] (1.5,2.5) rectangle (2.5,3.5);

\draw [line width=0.7pt, rounded corners] (-0.5,0.5) rectangle (4.5,4.3);

\begin{pgfonlayer}{background}
\filldraw [line width=0.1mm,rounded corners,black!2] (-0.5,0.5) rectangle (4.5,4.3);
\end{pgfonlayer}

\begin{pgfonlayer}{background}
\filldraw [line width=0.1mm,rounded corners,black!10] (1.5,1) rectangle (2.5,2);
\end{pgfonlayer}

\begin{pgfonlayer}{background}
\filldraw [line width=0.1mm,rounded corners,black!10] (1.5,2.5) rectangle (2.5,3.5);
\end{pgfonlayer}



\draw [line width=0.5pt, ->, >= triangle 45] (5.5,3.25) -- (2.5,3.25);
\draw [line width=0.5pt, ->, >= triangle 45] (5.5,2.75) -- (4.5-0.5,2.75) -- (3.5-0.5,1.75) -- (2.5,1.75);
\draw [line width=0.5pt, ->, >= triangle 45] (1.5,3.25) -- (-1.5,3.25);
\draw [line width=0.5pt, ->, >= triangle 45] (1.5,2.75) -- (0.5+0.5,2.75) -- (-0.5+0.5,1.75) -- (-1.5,1.75);
\draw [line width=0.5pt, ->, >= triangle 45] (5.5,1.25) -- (2.5,1.25);
\draw [line width=0.5pt, ->, >= triangle 45] (1.5,1.25) -- (-1.5,1.25);

\draw [line width=0.5pt, ->, >= triangle 45] (5.5,1.75) -- (4.5-0.5,1.75) -- (3.5-0.5,2.75) -- (2.5,2.75);
\draw [line width=0.5pt, ->, >= triangle 45] (1.5,1.75) -- (0.5+0.5,1.75) -- (-0.5+0.5,2.75) -- (-1.5,2.75);

%

\node at (2,3) {$G_1$};
\node at (2,1.5) {$G_2$};

\node at (0.1,4) {$G_0$};

\node at (5,3.4) {$v_1$};
\node at (5,2.9) {$v_2$};
\node at (5,3.4-1.4) {$d_1$};
\node at (5,2.9-1.4) {$d_2$};

\node at (-0.9,3.4) {$w_1$};
\node at (-0.9,2.9) {$w_2$};
\node at (-0.9,3.4-1.5) {$z_1$};
\node at (-0.9,2.9-1.5) {$z_2$};

\draw[decoration={brace,raise=0},decorate]
  (5.7,3.5) -- node[right=0.5] {$v$} (5.7,2.5);
  
  \draw[decoration={brace,mirror,raise=0},decorate]
  (-1.7,3.5) -- node[left=0.5] {$w$} (-1.7,2.5);
  
  \draw[decoration={brace,raise=0},decorate]
  (5.7,3.5-1.5) -- node[right=0.5] {$d$} (5.7,2.5-1.5);
  
    \draw[decoration={brace,mirror,raise=0},decorate]
  (-1.7,3.5-1.5) -- node[left=0.5] {$z$} (-1.7,2.5-1.5);

%
%
%



 \end{tikzpicture}
\caption{The system $G_0$.}
\label{G0}
\end{figure}
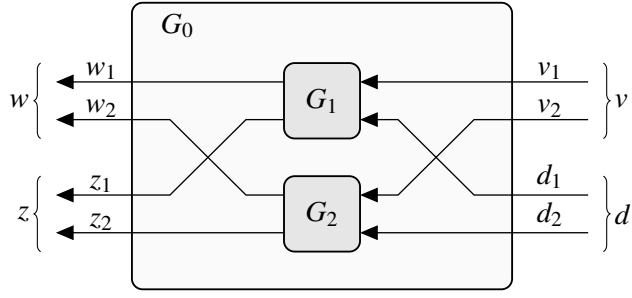


The system $G_0$ from Figure~\ref{G0} is given by \eqref{SysRobPerfA1} with $M=0$, $x=\col(x_1,x_2)$, $w=\col(w_1,w_2)$, $v=\col(v_1,v_2)$, $d=\col(d_1,d_2)$, $z=\col(z_1,z_2)$; with $A=\diag(A_1,A_2)$ and with $B$, $C$, $D$, $E$, $F$, $K$, $L$ defined as block diagonal matrices in the same way as $A$. 

Let the external supply function be given by
\[
s_{ext}(d,z):=-\begin{pmatrix} d \\ z \end{pmatrix}^\top 
\begin{pmatrix} Q^P & S^P \\ (S^P)^\top & R^P \end{pmatrix}\begin{pmatrix} d \\ z \end{pmatrix}.\;
\]
The fact that $s_{ext}$ has an additive structure implies that the matrices in the above definition of $s_{ext}$ are block diagonal, as follows:

\begin{equation}
\label{FullPiP_53}
\Pi^P=\begin{pmatrix} Q^P & S^P \\ (S^P)^\top & R^P \end{pmatrix}:=\left( \begin{array}{c c : c c} 
Q_{1}^P & 0 & S_{1}^P & 0 \\ 
0 & Q_{2}^P & 0  & S_{2}^P \\  \hdashline 
(S_{1}^P)^\top & 0 & R_{1}^P & 0 \\ 
0 & (S_{2}^P)^\top & 0 & R_{2}^P
\end{array} \right),
\end{equation}

since then we have
\begin{equation}
s_{ext}(d,z)=
\sum_{i=1}^2 \underbrace{\begin{pmatrix}d_i \\ z_i \end{pmatrix}^\top
\begin{pmatrix} -Q_{i}^P & -S_{i}^P \\ -(S_{i}^P)^\top & -R_{i}^P \end{pmatrix}
\begin{pmatrix}d_i \\ z_i \end{pmatrix}}_{s_{i,ext}(d_i,z_i)}.
\end{equation}
The dissipativity condition with respect to $s_{ext}(d,z)$ and with a storage function $V(x)=x^\top P x$ is, according to statement (b) of Theorem~\ref{PerformanceSProcedure} from Appendix~\ref{Appendix_A}, equivalent to the existence of a multiplier 
\begin{equation}
\label{MultiplierPi}
\Pi=\begin{pmatrix} Q & S \\ S^\top & R \end{pmatrix}
\end{equation}
such that
\begin{equation}
\label{NN1_53}
\begin{pmatrix} H \\ I \end{pmatrix}^\top 
\underbrace{\begin{pmatrix} Q & S \\ S^\top & R \end{pmatrix}}_{\Pi}
\begin{pmatrix} H \\ I \end{pmatrix} \succeq  0 
\end{equation}
and 

\begin{equation}
\label{NN2_53}
\left(\star\right)^\top
\left(\begin{array}{cc:cc:cc} 0 & P & 0 & 0 & 0 & 0\\ 
P & 0 & 0 & 0 & 0 & 0 \\ \hdashline 
0 & 0 & Q & S & 0 & 0\\ 
0 & 0 & S^\top & R  & 0 & 0 \\ \hdashline
0 & 0 & 0 & 0 & Q^P & S^P \\
0 & 0 & 0 & 0 & (S^P)^\top & R^P 
 \end{array}\right)
\left(\begin{array}{ccc} I & 0 & 0 \\ A & B & E \\ \hdashline 0 & I & 0 \\ C & D & 0 \\ \hdashline 0 & 0 & I \\ F & K & L \end{array}\right) \prec 0.
\end{equation}

Note that $\Pi$ is in general a full-block multiplier (i.e., $\Pi$ is a \emph{full symmetric matrix}).
Furthermore, note that we do not require any condition regarding definiteness of the matrix $R^P$, as opposed to the case in statement (a) of Theorem~\ref{PerformanceSProcedure}. 

Since the statement of Theorem~\ref{OpenSeparationProposition_53} assumes a storage function $V(x)$ with an additive structure, the matrix $P$ in \eqref{NN2_53} is block diagonal matrix, that is, $P=\diag(P_1,P_2)$. 

Next we present the main idea of the proof. 

\emph{\underline{The starting point:}} The LMIs \eqref{NN1_53} and \eqref{NN2_53} are necessarily satisfied for some full block $\Pi$. The LMI \eqref{NN2_53} implies (see Section~\ref{Sec2_B}) that the system $G_0$ is dissipative with respect to the supply function $s(v,w,d,z):=s_H(v,w)+s_{ext}(d,z)$, where 
\begin{equation}
\label{sH}
s_H(v,w)=-\begin{pmatrix}v \\ w\end{pmatrix}^\top \underbrace{\begin{pmatrix}Q & S \\ S^\top & R\end{pmatrix}}_{\Pi} \begin{pmatrix}v \\ w\end{pmatrix}.  
\end{equation}

\emph{\underline{The goal:}} Recall that our goal is to show that for $i=1,2$, the system $G_i$ is strictly dissipative with $s_i(d_i,v_i,z_i,w_i)=s_{i,ext}(d_i,z_i)+s_{i,int}(v_i,w_i)$ with respect to the storage function $V_i(x_i)=x_i^\top P_i x_i$. We will call these conditions \emph{local dissipativity conditions}. Furthermore, we require $s_{1,int}(v_1,w_1)+s_{2,int}(v_2,w_2)=0$, for all $v_1=w_2$, $v_2=w_1$. We will call this condition the \emph{neutrality condition}.
Next we show how we can ``translate'' these desired properties into equivalent conditions imposed on $G_0$ and $\Pi$, via \eqref{NN1_53}, \eqref{NN2_53} and the dissipativity interpretation of \eqref{NN2_53}.

Suppose that the matrices $Q$, $S$ and $R$ from \eqref{NN2_53}, i.e., from \eqref{sH}, are block diagonal, that is, $Q=Q^D:=\diag(Q_1^D,Q_2^D)$, $R=R^D:=\diag(R_1^D,R_2^D)$ and $S=S^D:=\diag(S_1^D,S_2^D)$, with the dimensions of the blocks in conformity with dimensions of $v_1$, $v_2$, $w_1$, $w_2$ that appear in $v$ and $w$ from \eqref{sH}. In that case, and since \textbf{\emph{all}} the matrices (e.g., $P$, $A$, $B$, etc.) that appear in \eqref{NN2_53} are block diagonal (now including also $Q=Q^D$, $R=R^D$, $S=S^D$), the LMI \eqref{NN2_53} can be decomposed into the following two independent LMIs:
\begin{equation}
\label{separateLMIs_53}
\left(\star \right)^\top
\left(\begin{array}{cc:cc:cc} 0 & P_i & 0 & 0 & 0 & 0\\ 
P_i & 0 & 0 & 0 & 0 & 0 \\ \hdashline 
0 & 0 & Q^D_i & S^D_i & 0 & 0\\ 
0 & 0 & (S^D_i)^\top & R^D_i  & 0 & 0 \\ \hdashline
0 & 0 & 0 & 0 & Q^P_i & S^P_i \\
0 & 0 & 0 & 0 & (S^P_i)^\top & R^P_i 
 \end{array}\right)
\left(\begin{array}{ccc} I & 0 & 0 \\ A_i & B_i & E_i \\ \hdashline 0 & I & 0 \\ C_i & D_i & 0 \\ \hdashline 0 & 0 & I \\ F_i & K_i & L_i \end{array}\right) \prec 0,
\end{equation}
$i=1,2$. These two LMIs in turn imply (see Section~\ref{Sec2_B}) that the \emph{local dissipativity conditions} are satisfied with $s_{i,int}:=-\begin{pmatrix}v_i \\ w_i\end{pmatrix}^\top\begin{pmatrix} Q_i^D & S_i^D \\ (S_i^D)^\top & R_i^D \end{pmatrix}\begin{pmatrix}v_i \\ w_i\end{pmatrix}$, for $i=1,2$. Furthermore, with such definitions of $s_{1,int}$ and $s_{2,int}$, it is easy to verify that the \emph{neutrality} condition  is satisfied if and only if
\begin{equation}
\label{Neutrality_53}
\begin{pmatrix} Q_2^D & S_2^D \\ (S_2^D)^\top & R_2^D \end{pmatrix}=
\begin{pmatrix} -R_1^D & -(S_1^D)^\top \\ -S_1^D & -Q_1^D \end{pmatrix}.
\end{equation}
With the abbreviation $\Pi_D:=\begin{pmatrix}Q^D & S^D \\ (S^D)^\top & R^D\end{pmatrix}$, it is crucial to note that for $\Pi_D$ which satisfies the neutrality condition \eqref{Neutrality_53}, the LMI \eqref{NN1_53} is necessarily satisfied with equality sign when $\Pi$ is replaced by $\Pi_D$.

To summarize, the goal we want to reach is equivalent to showing that the dissipativity LMI \eqref{NN2_53} is feasible when the full block multiplier $\Pi$ is replaced with some structured multiplier $\Pi_D$, while $\Pi_D$ also satisfies the condition \eqref{Neutrality_53}. 

\emph{\underline{The main idea and proof overview}}: The main idea is to \emph{construct} a structured multiplier $\Pi_D$ which satisfies all the above described conditions, starting from existence of a full-block multiplier $\Pi$ which satisfies \eqref{NN1_53} and \eqref{NN2_53}. 

The remaining part of the proof is divided in 3 steps. In \emph{Step~1} we derive an alternative dissipativity condition, which is equivalent to feasibility of \eqref{NN2_53}. Note that in this step we exploit the assumption that both $C_1$ and $C_2$ are full row rank matrices. In \emph{Step~2}, this alternative condition is combined with \eqref{NN1_53} to derive an auxiliary result which will be used in \emph{Step~3} to show that the proposed $\Pi_D$ satisfies the alternative dissipativity condition. In \emph{Step~3} we construct $\Pi_D$ which satisfies all required conditions.    

\medskip

\textbf{\emph{Step 1}: Alternative characterization of dissipativity.} \newline
We first consider the case when $n_{w1} < n_1$ and $n_{w2} < n_2$. At the end of the proof we remark on the case when $n_{w1}=n_1$ and $n_{w2}=n_2$, or when we have some combination of the above equalities/inequalities. 

Let $V_1$ span the kernel of $C_1$ and $V_2$ span the kernel of $C_2$. Furthermore, let $W_1$ and $W_2$ be the matrices whose columns span the orthogonal subspaces to $V_1$ and $V_2$, respectively. Let 
\begin{equation}
\label{newT_53}
T:=\begin{pmatrix} \Xi & 0 & 0 \\ 0  & C_W & 0 \\   0 & 0 & I  \end{pmatrix} ,
\end{equation}
where $\Xi=\begin{pmatrix} V & W \end{pmatrix}$ with $V=\diag(V_1,V_2)$, $W=\diag(W_1,W_2)$, while $C_W=\diag(C_2W_2,C_1W_1)$.
Note that $T$ is a nonsingular square matrix (which would not be the case if $C_1$ and/or $C_2$ would not have full row rank). 
After congruence transformation with $T$ on \eqref{NN2_53} we obtain the inequality \eqref{BigEquation}, 
\begin{equation}
\label{BigEquation}
\begin{pmatrix}
V^\top M V & V^\top M W & V^\top N_A C_W & V^\top N_B \\
W^\top M V & W^\top(M+C^\top R C)W & W^\top(N_A+C^\top S^\top + C^\top R D)C_W & W^\top N_B\\
C_W^\top N_A^\top V & C_W^\top (N_A^\top+SC+D^\top RC)W & C_W^\top (K^\top R^P K + Q+SD+D^\top S^\top + D^\top R D) C_W & C_W^\top N_C \\
N_B^\top V & N_B^\top W & N_C^\top C_W & N_D
\end{pmatrix} \prec 0
\end{equation}
where we have used the abbreviations
\begin{align}
M&:=A^\top P+PA+F^\top R^P F, \label{labelM} \\
N_A&:=PB+F^\top R^P K, \\
N_B&:=PE+F^\top(S^P)^\top+F^\top R^P L, \\
N_C&:=K^\top (S^P)^\top+K^\top R^P L, \\
N_D&:=Q^P+S^PL+L^\top (S^P)^\top +L^\top R^P L.
\end{align}
\normalsize

After applying Schur complement rule twice on the inequality \eqref{BigEquation}, first time with the diagonal block $V^\top M V$ (upper left block in \eqref{BigEquation}) to be inverted, and the second time with the diagonal block $N_D-N_B^\top V (V^\top M V)^{-1} V^\top N_B $ (which appears on diagonal in one of the matrices after the first Schur complement has been applied) to be inverted, we obtain the following inequalities which are equivalent to \eqref{BigEquation}, hence also to \eqref{NN2_53}:
\begin{subequations}
\label{DisCompact}
\begin{align}
&V^\top M V \prec 0  \label{DisCompact0} \\
&N_D-N_B^\top V (V^\top M V)^{-1} V^\top N_B  \prec 0, \label{DisCompactA}  \\
&\begin{pmatrix} \star\end{pmatrix}^\top
\begin{pmatrix} \star\end{pmatrix}^\top
\begin{pmatrix} Q &  S \\ S^\top & R \end{pmatrix}
\begin{pmatrix} 0 & I \\ I & D \end{pmatrix}
\begin{pmatrix} CW & 0 \\ 0 & C_W \end{pmatrix} + \nonumber \\
&+\underbrace{
\begin{pmatrix} \star \end{pmatrix}^\top
\begin{pmatrix} \breve{R}_I+\breve{R}_{II} &  \breve{S}_I^\top+\breve{S}_{II}^\top \\ \breve{S}_I+\breve{S}_{II} & \breve{Q}_I+\breve{Q}_{II} \end{pmatrix}
\begin{pmatrix} W & 0 \\ 0 & C_W \end{pmatrix}}_Y\prec 0. \label{DisCompactB}
\end{align}
\end{subequations}

In the above inequalities where we have used the abbreviations
\begin{subequations}
\label{NewLabel_1}
\begin{align}
& \breve{R}_I=M-MV(V^\top M V)^{-1}V^\top M,  \\
& \breve{S}_I=N_A^\top - N_A^\top V (V^\top M V)^{-1}V^\top M,  \\
& \breve{Q}_I=K^\top R^P K-N_A^\top V (V^\top M V)^{-1} V^\top N_A,  \\
& \breve{R}_{II}=-\Big(\star \Big)^\top Z^{-1}\Big(N_B^\top-N_B^\top V (V^\top M V)^{-1} V^\top M \Big),  \\
& \breve{S}_{II}=-\tilde{Z}\,Z^{-1}\, \tilde{\tilde{Z}},  \\
& \breve{Q}_{II}=-\Big(\star \Big)^\top Z^{-1}\Big(N_C^\top- N_B^\top V (V^\top M V)^{-1} V^\top N_A \Big). 
\end{align}
\end{subequations}
with $Z:=N_D-N_B^\top V (V^\top M V)^{-1} V^\top N_B$, $\tilde{Z}:=N_C-N_A^\top V (V^\top M V)^{-1}V^\top N_B$, $\tilde{\tilde{Z}}:= N_B^\top-N_B^\top V (V^\top M V)^{-1} V^\top M$.
Let 
\begin{equation}
\label{NewLabel_2}
\hat{R}:=\breve{R}_{I}+\breve{R}_{II}, \,\,\, \hat{S}:=\breve{S}_{I}+\breve{S}_{II} \,\,\, \text{and} \,\,\, \hat{Q}:=\breve{Q}_{I}+\breve{Q}_{II}.
\end{equation}
Note that $\hat{R}, \hat{S}$ and $\hat{Q}$ are by construction block diagonal matrices, i.e., we can write
\[\hat{R}=\begin{pmatrix} \hat{R}_1 & 0 \\ 0 & \hat{R}_2 \end{pmatrix}, \quad
\hat{S}=\begin{pmatrix} \hat{S}_1 & 0 \\ 0 & \hat{S}_2 \end{pmatrix}, \quad
\hat{Q}=\begin{pmatrix} \hat{Q}_1 & 0 \\ 0 & \hat{Q}_2 \end{pmatrix},
\]
where $\hat{R}_i \in \Rset^{n_i \times n_i}$ for $i=1,2$, $\hat{Q}_1 \in \Rset^{n_w \times n_w}$, $\hat{Q}_2 \in \Rset^{n_z \times n_z}$, $\hat{S}_1 \in \Rset^{n_w \times n_1}$ and $\hat{S}_2 \in \Rset^{n_z \times n_2}$.

Let us define $L_1=\left(\begin{smallmatrix} C_1 \\ V_1^\top \end{smallmatrix}\right)$, $L_2=\left(\begin{smallmatrix} C_2 \\ V_2^\top \end{smallmatrix}\right)$. Note that $L_1 \in \Rset^{n_1\times n_1}$ and $L_2\in \Rset^{n_2\times n_2}$ are nonsingular square matrices and that
\[L_1 W_1 = \begin{pmatrix} C_1 W_1 \\ 0 \end{pmatrix}, \quad \quad L_2 W_2 = \begin{pmatrix} C_2 W_2 \\ 0 \end{pmatrix}. \]
With $L=\diag(L_1,L_2)$ the matrix $Y$ from \eqref{DisCompactB} can be presented as
\begin{equation}
\label{tempYa_NEW}
Y=\begin{pmatrix} \star \end{pmatrix}^\top
\begin{pmatrix} L^{-\top}\hat{R} L^{-1} &  L^{-\top} \hat{S}^\top \\ \hat{S} L^{-1} & \hat{Q} \end{pmatrix}
\begin{pmatrix} LW & 0 \\ 0 & C_W \end{pmatrix}.
\end{equation}
Note that
\begin{equation}
LW=\left( \begin{array} {c : c}
C_1 W_1  & 0 \\ 
0 & 0 \\ \hdashline 
0 & C_2 W_2 \\
0 & 0 
\end{array} \right)
\end{equation}
and that we can, in conformity with the above partition of $LW$, partition $L^{-\top}\hat{R} L^{-1}$ and $\hat{S} L^{-1}$ into blocks
\begin{equation}
L^{-\top}\hat{R} L^{-1}= \left( 
\begin{array}{c c : c c}
\tilde{R}^1_{11} & \tilde{R}^1_{12} & 0 & 0 \\
(\tilde{R}^1_{12})^\top & \tilde{R}^1_{22} & 0 & 0 \\ \hdashline
0 & 0 & \tilde{R}^2_{11} & \tilde{R}^2_{12} \\
0 & 0 & (\tilde{R}^2_{12})^\top & \tilde{R}^2_{22}  
\end{array} 
\right), \label{LRL}
\end{equation}
\begin{equation}
\hat{S}L^{-1}=\left( \begin{array}{c c : c c}
\tilde{S}^1_{11} & \tilde{S}^1_{12} & 0 & 0 \\ \hdashline
0 & 0 & \tilde{S}^2_{11} & \tilde{S}^2_{12}
\end{array} \right). \nonumber
\end{equation}
%
The matrix $Y$ from \eqref{tempYa_NEW}, after multiplications and with $C_W=\diag(C_2 W_2, C_1 W_1)$, becomes
\begin{equation}
Y=\begin{pmatrix} C W & 0 \\ 0 & C_W \end{pmatrix}^\top
\begin{pmatrix} \cR & \cS^\top \\ \cS & \cQ \end{pmatrix}
\begin{pmatrix} C W & 0 \\ 0 & C_W \end{pmatrix}, 
\end{equation}
where
\begin{subequations}
\label{cc}
\begin{align}
\cR&=
\begin{pmatrix} \cR_1 & 0 \\ 0 & \cR_2 \end{pmatrix}:=
\begin{pmatrix} \tilde{R}^1_{11} & 0 \\ 0 & \tilde{R}^2_{11} \end{pmatrix}, \label{ccA} \\
\cS&=
\begin{pmatrix} \cS_1 & 0 \\ 0 & \cS_2 \end{pmatrix}:=
\begin{pmatrix} \tilde{S}^1_{11} & 0 \\ 0 & \tilde{S}^2_{11} \end{pmatrix}, \label{ccB}\\
\cQ&=
\begin{pmatrix} \cQ_1 & 0 \\ 0 & \cQ_2 \end{pmatrix}:=
\begin{pmatrix} \hat{Q}_1 & 0 \\ 0 & \hat{Q}_2 \end{pmatrix}=\hat{Q}. \label{ccC}
\end{align}
\end{subequations}
The inequality \eqref{DisCompactB} now reads as

\begin{equation}
\begin{pmatrix} \star\end{pmatrix}^\top
\left(
\begin{pmatrix} \star \end{pmatrix}^\top 
\begin{pmatrix} Q & S \\ S^\top & R \end{pmatrix}
\begin{pmatrix} 0 & I \\ I & D \end{pmatrix}
+
\begin{pmatrix} \cR & \cS^\top \\ \cS & \cQ \end{pmatrix}
\right)
\begin{pmatrix}C W & 0 \\ 0 & C_W \end{pmatrix}\prec 0. 
\end{equation}

Since $CW$ and $C_W$ are nonsigular square matrices, \eqref{DisCompactB} is equivalent to
\begin{equation}
\label{QSRinequality_53}
\begin{pmatrix} I & 0 \\ D & I \end{pmatrix}^\top
\begin{pmatrix} Q & S \\ S^\top & R \end{pmatrix}
\begin{pmatrix} I & 0 \\ D & I \end{pmatrix}
+
\begin{pmatrix} \cQ & \cS \\ \cS^\top & \cR \end{pmatrix}
\prec 0. 
\end{equation}
Recall that $Q$, $S$ and $R$ are full matrices, while $\cQ$, $\cS$ and $\cR$ are block diagonal matrices, derived from the parameters of the systems ($A_i, B_i, C_i), i=1,2$ and the Laypunov matrices $P_1, P_2$. 
The derived results up to now can be summarized in the following equivalence
\begin{equation}
\label{Equivalence_53}
\text{\eqref{DisCompact0}, \eqref{DisCompactA}, \eqref{QSRinequality_53}} \quad \iff \quad \text{\eqref{NN2_53}}.
\end{equation}
The inequalities \eqref{DisCompact0}, \eqref{DisCompactA} do not depend on $Q$, $S$ and $R$, and since our goal is to devise new multipliers $Q$, $S$ and $R$, which define the interconnection neutral supply rates, in the remainder we focus on the inequality \eqref{QSRinequality_53}.

\medskip

\textbf{\emph{Step 2}: Auxiliary result.} \newline
After congruence transformation of \eqref{QSRinequality_53} with $\left(\begin{smallmatrix} I & 0 \\ -D & I \end{smallmatrix}\right)$ we have 
\begin{equation}
\label{QSRinequality1_53}
\begin{pmatrix} Q & S \\ S^\top & R \end{pmatrix}+
\begin{pmatrix} I & 0 \\ -D & I \end{pmatrix}^\top
\begin{pmatrix} \cQ & \cS \\ \cS^\top & \cR \end{pmatrix}
\begin{pmatrix} I & 0 \\ -D & I \end{pmatrix} \prec 0.
\end{equation}
Pre-multiplying \eqref{QSRinequality1_53} with $\left(\begin{smallmatrix} H \\ I \end{smallmatrix}\right)^\top$ and post-multiplying with $\left(\begin{smallmatrix} H \\ I \end{smallmatrix}\right)$, and using \eqref{NN1_53}, we obtain
\begin{align}
&\cM = \begin{pmatrix} \cM_1 & \cM_2 \\ \cM_2^\top & \cM_3 \end{pmatrix} \prec 0, \label{StructuredQSR_53} \\
& \cM_1=\cR_1+\cQ_2-\cS_2 D_2 - D_2^\top \cS_2^\top + D_2^\top \cR_2 D_2, \nonumber \\
& \cM_2=\cS_1^\top + \cS_2 - \cR_1 D_1 - D_2^\top \cR_2, \nonumber \\
&\cM_3= \cQ_1+\cR_2-\cS_1 D_1 - D_1^\top \cS_1^\top + D_1^\top \cR_1 D_1. \nonumber
\end{align}

\medskip 

\textbf{\emph{Step 3}: Construction of structured multipliers.} \newline
Our goal is now to find a new multiplier 
\begin{equation}
\label{MultiplierPiNew}
\Pi_D=\begin{pmatrix} Q^D & S^D \\ (S^D)^\top & R^D \end{pmatrix}
\end{equation}
 with block diagonal matrices $Q^D=\diag(Q^D_1,Q^D_2)$, $S^D=\diag(S^D_1,S^D_2)$ and $R^D=\diag(R^D_1,R^D_2)$, which can replace $\Pi$ from \eqref{MultiplierPi}  in the inequality \eqref{NN2_53} so that this inequality remains to hold, while in addition the multiplier $\Pi_D$ satisfies the neutrality condition \eqref{Neutrality_53}.
Due to \eqref{Equivalence_53}, the multiplier $\Pi_D$ satisfies the inequality \eqref{NN2_53} if and only if

\begin{equation}
\label{Goal1a}
\begin{pmatrix} I & 0 \\ D & I \end{pmatrix}^\top
\begin{pmatrix} Q^D & S^D \\ (S^D)^\top & R^D \end{pmatrix}
\begin{pmatrix} I & 0 \\ D & I \end{pmatrix}
+
\begin{pmatrix} \cQ & \cS \\ \cS^\top & \cR \end{pmatrix}
\prec 0.
\end{equation}

%
%
Consider the multiplier $\Pi_D$ defined with the following matrices

\begin{subequations}
\label{MultNew}
\begin{align}
Q^D_1&= \alpha ( -\cQ_1+D_1^\top \cS_1^\top+\cS_1 D_1-D_1^\top \cR_1 D_1 )+(1-\alpha)\cR_2, \label{MultA} \\
S^D_1&= \alpha(-\cS_1+D_1^\top \cR_1)+(1-\alpha)(\cS_2^\top-\cR_2 D_2), \label{MultB}\\
R^D_1&= \alpha (-\cR_1)+(1-\alpha)(\cQ_2-\cS_2 D_2-D_2^\top \cS_2^\top + D_2^\top \cR_2 D_2), \label{MultC}\\
Q^D_2&=-R^D_1, \quad S^D_2=-(S^D_1)^\top, \quad R^D_2=-Q^D_1,  \label{MultD}
\end{align}
\end{subequations}

where $\alpha$ is a real number from the interval $(0,1)$. We have 
\begin{align}
&\begin{pmatrix} I & 0 \\ D & I \end{pmatrix}^\top
\begin{pmatrix} Q^D & S^D \\ (S^D)^\top & R^D \end{pmatrix}
\begin{pmatrix} I & 0 \\ D & I \end{pmatrix}
+
\begin{pmatrix} \cQ & \cS \\ \cS^\top & \cR \end{pmatrix} = \nonumber\\
&\left( \star \right)^\top
\left( \begin{array}{c c : c c} 
(1-\alpha)\cM_3 & 0 & (1-\alpha)\cM_2^\top & 0 \\ 
0 & \alpha\cM_1 & 0 & \alpha\cM_2 \\ \hdashline
(1-\alpha)\cM_2 & 0 & (1-\alpha)\cM_1 & 0 \\
0 & \alpha\cM_2^\top & 0 & \alpha\cM_3 
\end{array} \right)
\begin{pmatrix} I & 0 \\ D & I \end{pmatrix} \prec 0, \label{Condition1_53}
\end{align}
that is, the condition \eqref{Goal1a} is satisfied. The inequality in \eqref{Condition1_53} follows from \eqref{StructuredQSR_53}. 
Finally, note that due to \eqref{MultD}, $\Pi_D$ by construction also satisfies the neutrality condition \eqref{Neutrality_53}. 

This concludes the proof for the case when $n_{w1} < n_1$ and $n_{w2} < n_2$. In the case when either $n_{w1}=n_1$ or $n_{w2}=n_2$ (or both), the proof follows along the similar lines, except that the congruence transformation with the matrix $T$, defined in \eqref{newT_53}, is either completely omitted (when $n_{w1}=n_1$, $n_{w2}=n_2$) or $T$ is suitably modified. 
\end{proof}

\subsection{Proof of Proposition~\ref{PropositionRank}}
\label{Sec6_B}
We present proof of the extension of Theorem~\ref{OpenSeparationProposition_53} in the case when $C_1$ and $C_2$ are not full row rank. From there, proof of the analogous extension of Corollary~\ref{Cor1} follows directly by omitting the matrices related to exogenous inputs/outputs $d_1, d_2, z_1, z_2$, i.e., the matrices  $E_i, F_i, K_i, L_i$, $i=1,2$, and indeed by omitting the terms related to the external supply functions $s_{1,ext}$ and $s_{2,ext}$. 

\medskip

\begin{proof}[\textbf{Step 0: \textit{The main idea and overview}}]
Suppose that $C_1 \in \Rset^{n_{w1}\times n_1}$ and $C_2 \in \Rset^{n_{w2}\times n_2}$ are row rank deficient. Let $\tilde{n}_{w1}$ ($\tilde{n}_{w1} < n_{w1}$) and $\tilde{n}_{w2}$ $(\tilde{n}_{w2} < n_{w2})$ denote respectively rank of $C_1$ and rank of $C_2$. Then, without loss of generality, we can take
\begin{equation}
\label{Cmatrices}
C_1=\begin{pmatrix} J_1 \\ I_{\tilde{n}_{w1}} \end{pmatrix} \tilde{C}_1, \quad \quad C_2=\begin{pmatrix} J_2 \\ I_{\tilde{n}_{w2}} \end{pmatrix} \tilde{C}_2,
\end{equation}  
where $\tilde{C}_1 \in \Rset^{\tilde{n}_{w1} \times n_1}$ and $\tilde{C}_2 \in \Rset^{\tilde{n}_{w2} \times n_2}$ are full row rank matrices. The matrix $J_1 $ defines the first $(n_{w1}-\tilde{n}_{w1})$ rows of $C_1$ as linear combinations of rows of $\tilde{C}_1$, while the matrix $J_2$ does the same for $C_2$.   

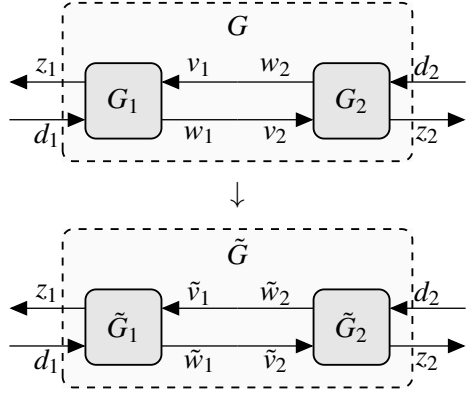
\begin{figure}[h!]
\centering
 \begin{tikzpicture}[scale=1]

\draw [line width=0.7pt, rounded corners, dashed] (1.2,0.7) rectangle (5.8,2.8);
\begin{pgfonlayer}{background}
\filldraw [line width=0.1mm, black!2] (1.2,0.7) rectangle (5.8,2.8);
\end{pgfonlayer}

\node at (3.5,2.5) {$G$};
 
 
\draw [line width=0.7pt, rounded corners] (1.5,1) rectangle (2.5,2);
\draw [line width=0.7pt, rounded corners] (4.5,1) rectangle (5.5,2);

\draw [line width=0.5pt, ->, >= triangle 45, rounded corners] (1.5,1.75) -- (0.5,1.75);
\draw [line width=0.5pt, ->, >= triangle 45, rounded corners] (0.5,1.25) -- (1.5,1.25);

\draw [line width=0.5pt, ->, >= triangle 45, rounded corners] (3.5,1.75) -- (2.5,1.75);
\draw [line width=0.5pt,  rounded corners] (2.5,1.25) -- (3.5,1.25);

\draw [line width=0.5pt,  rounded corners] (4.5,1.75) -- (3.5,1.75);
\draw [line width=0.5pt, ->, >= triangle 45, rounded corners] (3.5,1.25) -- (4.5,1.25);

\draw [line width=0.5pt, ->, >= triangle 45, rounded corners] (6.5,1.75) -- (5.5,1.75);
\draw [line width=0.5pt, ->, >= triangle 45, rounded corners] (5.5,1.25) -- (6.5,1.25);


\node at (1,1.95) {$z_1$};
\node at (1,1.05) {$d_1$};
\node at (2,1.5) {$G_1$};
\node at (3,1.95) {$v_1$};
\node at (3,1.05) {$w_1$};

\node at (4,1.95) {$w_2$};
\node at (4,1.05) {$v_2$};
\node at (5,1.5) {$G_2$};
\node at (6,1.95) {$d_{2}$};
\node at (6,1.05) {$z_{2}$};
\begin{pgfonlayer}{background}
\filldraw [line width=0.1mm,rounded corners,black!10] (1.5,1) rectangle (2.5,2);
\end{pgfonlayer}

\begin{pgfonlayer}{background}
\filldraw [line width=0.1mm,rounded corners,black!10] (4.5,1) rectangle (5.5,2);
\end{pgfonlayer}


\node at (3.5,0.25) {$\downarrow$};

 
\draw [line width=0.7pt, rounded corners, dashed] (1.2,0.7-3) rectangle (5.8,2.8-3);
\begin{pgfonlayer}{background}
\filldraw [line width=0.1mm, black!2] (1.2,0.7-3) rectangle (5.8,2.8-3);
\end{pgfonlayer}

\node at (3.5,2.5-3) {$\tilde{G}$};
 
 
\draw [line width=0.7pt, rounded corners] (1.5,1-3) rectangle (2.5,2-3);
\draw [line width=0.7pt, rounded corners] (4.5,1-3) rectangle (5.5,2-3);

\draw [line width=0.5pt, ->, >= triangle 45, rounded corners] (1.5,1.75-3) -- (0.5,1.75-3);
\draw [line width=0.5pt, ->, >= triangle 45, rounded corners] (0.5,1.25-3) -- (1.5,1.25-3);

\draw [line width=0.5pt, ->, >= triangle 45, rounded corners] (3.5,1.75-3) -- (2.5,1.75-3);
\draw [line width=0.5pt,  rounded corners] (2.5,1.25-3) -- (3.5,1.25-3);

\draw [line width=0.5pt,  rounded corners] (4.5,1.75-3) -- (3.5,1.75-3);
\draw [line width=0.5pt, ->, >= triangle 45, rounded corners] (3.5,1.25-3) -- (4.5,1.25-3);

\draw [line width=0.5pt, ->, >= triangle 45, rounded corners] (6.5,1.75-3) -- (5.5,1.75-3);
\draw [line width=0.5pt, ->, >= triangle 45, rounded corners] (5.5,1.25-3) -- (6.5,1.25-3);


\node at (1,1.95-3) {$z_1$};
\node at (1,1.05-3) {$d_1$};
\node at (2,1.5-3) {$\tilde{G}_1$};
\node at (3,1.95-3) {$\tilde{v}_1$};
\node at (3,1.05-3) {$\tilde{w}_1$};

\node at (4,1.95-3) {$\tilde{w}_2$};
\node at (4,1.05-3) {$\tilde{v}_2$};
\node at (5,1.5-3) {$\tilde{G}_2$};
\node at (6,1.95-3) {$d_{2}$};
\node at (6,1.05-3) {$z_{2}$};
\begin{pgfonlayer}{background}
\filldraw [line width=0.1mm,rounded corners,black!10] (1.5,1-3) rectangle (2.5,2-3);
\end{pgfonlayer}

\begin{pgfonlayer}{background}
\filldraw [line width=0.1mm,rounded corners,black!10] (4.5,1-3) rectangle (5.5,2-3);
\end{pgfonlayer}

 \end{tikzpicture}
\caption{Redefinition of local systems and interconnecting signals.}
\label{GandG}
\end{figure}

Instead of considering interconnection of systems $G_1$ and $G_2$ given by \eqref{Systems_53} with $D_1=0$ and $D_2=0$ (let us denote this interconnection with $G$, as done in Figure~\ref{GandG}), we consider dissipativity properties of the system obtained by interconnecting $\tilde{G}_1$ and $\tilde{G}_2$ with

\begin{equation}
\tilde{G}_i: \,
\begin{pmatrix} \dot{x}_i \\ \tilde{w}_i \\ z_i \end{pmatrix}=
\begin{pmatrix} A_i & \tilde{B}_i & E_i \\
\tilde{C}_i & 0 & 0 \\
F_i & \tilde{K}_i & L_i \end{pmatrix}
\begin{pmatrix} x_i \\ \tilde{v}_i \\ d_i \end{pmatrix}, \quad i=1,2, 
\end{equation}

where 

\begin{equation}
\label{BKmatrices}
\tilde{B}_1=B_1\begin{pmatrix} J_2 \\ I \end{pmatrix}, \,\, \tilde{K}_1=K_1\begin{pmatrix} J_2 \\ I \end{pmatrix}, \,\, \tilde{B}_2=B_2\begin{pmatrix} J_1 \\ I \end{pmatrix}, \,\, \tilde{K}_2=K_2\begin{pmatrix} J_1 \\ I \end{pmatrix}.
\end{equation}

Let $\tilde{G}$ denote the interconnection of $\tilde{G}_1$ and $\tilde{G}_2$ obtained with constraints $\tilde{v}_1=\tilde{w}_2$ and $\tilde{v}_2=\tilde{w}_1$. 
Then $\tilde{G}$ and $G$ are the same system when observed from the input-output behaviour of the exogenous signals $d:=\col(d_1,d_2)\rightarrow z=\col(z_1,z_2)$. The only difference is in the definitions of internal ``subsystems'' ($G_1$ and $G_2$ in $G$; $\tilde{G}_1$ and $\tilde{G}_2$ in $\tilde{G}$) and the corresponding interconnection signals:  $w_1$, $v_1$ in $G$ and $\tilde{w}_1$, $\tilde{v}_1$ in $\tilde{G}$, as presented in Figure~\ref{GandG}.


\begin{figure}[h!]
\centering
 \begin{tikzpicture}
 

\draw [line width=0.7pt, rounded corners, dashed] (0.5,-0.5) rectangle (7.3,3);
\begin{pgfonlayer}{background}
\filldraw [line width=0.1mm, black!2] (0.5,-0.5) rectangle (7.3,3);
\end{pgfonlayer}

\draw [line width=0.7pt, rounded corners, dashed] (4.5,0) rectangle (11,3.5);
\begin{pgfonlayer}{background}
\filldraw [line width=0.1mm, black!2] (4.5,0) rectangle (11,3.5);
\end{pgfonlayer}

\draw [line width=0.7pt, rounded corners] (1,1) rectangle (2,2);
\begin{pgfonlayer}{background}
\filldraw [line width=0.1mm, black!10,rounded corners] (1,1) rectangle (2,2);
\end{pgfonlayer} 

\draw [line width=0.7pt, rounded corners] (5,0.5) rectangle (6.5,2.5);
\begin{pgfonlayer}{background}
\filldraw [line width=0.1mm, black!10, rounded corners] (5,0.5) rectangle (6.5,2.5);
\end{pgfonlayer}

\draw [line width=0.7pt, rounded corners] (9.5,1) rectangle (10.5,2);
\begin{pgfonlayer}{background}
\filldraw [line width=0.1mm, black!10,rounded corners] (9.5,1) rectangle (10.5,2);
\end{pgfonlayer} 


\node at (1.5,1.5) {$B_1$};
\node at (5.75,1.5) {$\begin{pmatrix} J_2 \\ I \end{pmatrix}$};
\node at (10,1.5) {$\tilde{C}_2$};


\draw [line width=0.5pt, ->, >= triangle 45, rounded corners] (5,1.5) -- (2,1.5);
\draw [line width=0.5pt, ->, >= triangle 45, rounded corners] (1,1.5) -- (-0.75,1.5);
\draw [line width=0.5pt, ->, >= triangle 45, rounded corners] (9.5,1.5) -- (6.5,1.5);
\draw [line width=0.5pt, ->, >= triangle 45, rounded corners] (12,1.5) -- (10.5,1.5);


\node at (1.2,0) {$\tilde{B}_1$};
\node at (10.3,2.9) {$C_2$};

\node at (11.6,1.75) {$x_2$};
\node at (9,1.75) {$\tilde{w}_2$};
\node at (7.75,1.75) {$\tilde{v}_1$};
\node at (4,1.75) {$w_2$};
\node at (2.8,1.75) {$v_1$};

 \end{tikzpicture}
\caption{Interconnection between $G_1$ and $G_2$, versus interconnection between $\tilde{G}_1$ and $\tilde{G}_2$.}
\label{Substitution}
\end{figure}
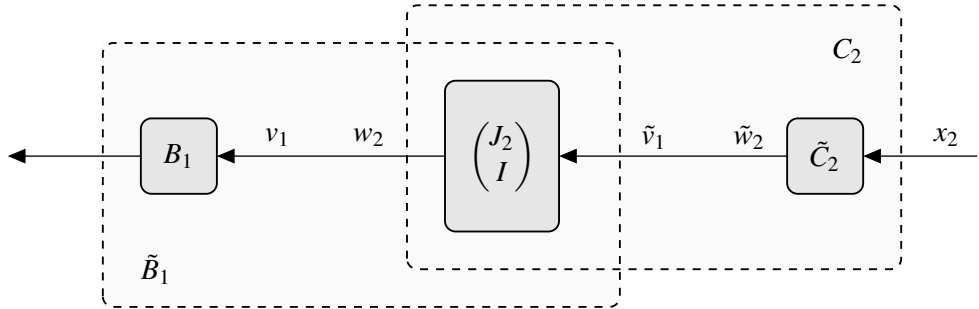

The main principle behind the above described redefinition of signals and system matrices is illustrated in Figure~\ref{Substitution}, which presents relation of the interconnection signal between $G_1$ and $G_2$ (signal $v_1=w_2$), and the signal between $\tilde{G_1}$ and $\tilde{G}_2$ (signal $\tilde{v}_1=\tilde{w}_2$). The same figure illustrates the relations between $\tilde{B}_1$ and $B_1$ as well as $\tilde{C}_2$ and $C_2$. By altering all the indices in the figure from $1$ to $2$ and from $2$ to $1$, we obtain illustration of the relations between $\tilde{B}_2$ and $B_2$, between $\tilde{C}_1$ and $C_1$, and between signals pars $(v_2,w_1)$ and $(\tilde{v}_2,\tilde{w}_1)$.

We emphasize that the above redefinition of signals, matrices and subsystems cannot be done in case when $D_1 \neq 0$ and/or $D_2 \neq 0$.  

\underline{\emph{The main idea}}: Note that in $\tilde{G}$, both $\tilde{C}_1$ and $\tilde{C}_2$ are full row rank matrices and therefore we can apply Theorem~\ref{OpenSeparationProposition_53} to infer existence of interconnection neutral supply functions $\tilde{s}_{1,int}(\tilde{v}_1,\tilde{w}_1)$ and $\tilde{s}_{2,int}(\tilde{v}_2,\tilde{w}_2)$ on the interconnecting signals of $\tilde{G}$. 
The main idea of the proof is to devise interconnection neutral supply functions $s_{1,int}(v_1,w_1)$ and $s_{2,int}(v_2,w_2)$ on the interconnecting signals of $G$, starting from existence of $\tilde{s}_{1,int}$ and $\tilde{s}_{2,int}$.

\underline{\emph{Overview}}:The remainder of the proof is divided in 4 steps. In \emph{Step~1} we define $\tilde{s}_{1,int}$ and $\tilde{s}_{2,int}$ using a triplet of matrices $(\tilde{Q},\tilde{R},\tilde{S})$. The goal is to find a new triplet $(Q,R,S)$ which defines $s_{1,int}(v_1,w_1)$ and $s_{2,int}(v_2,w_2)$. Since the interconnection signals $\tilde{v}_1$, $\tilde{w_1}$, $\tilde{v}_2$, $\tilde{w_2}$ have lower spatial dimension than the corresponding signals $v_1$, $w_1$, $v_2$, $w_2$, i.e., $\tilde{v}_1(t)\in \Rset^{\tilde{n}_{w1}}$, $v_1(t)\in \Rset^{n_{w1}}$ with $\tilde{n}_{w1}<n_{w1}$, etc., the matrices $(Q,R,S)$ are characterized with larger number of rows/columns than the corresponding matrices $(\tilde{Q},\tilde{R},\tilde{S})$. We say that $(Q,R,S)$ are constructed by an appropriate \emph{extension} of $(\tilde{Q},\tilde{R},\tilde{S})$. In \emph{Step~1} we present a set of algebraic conditions for this extension. In \emph{Step~2} we present a procedure how to construct $Q$ by an appropriate extension from $\tilde{Q}$. In \emph{Step~3} and \emph{Step~4} we present similar extensions to construct $R$ and $S$, respectively.     

\medskip

\textbf{\emph{Step 1}: Conditions for extension.} \newline
Let 
\begin{equation}
\label{s1int}
\tilde{s}_{1,int}(\tilde{v}_1,\tilde{w}_1)=\begin{pmatrix} \tilde{v}_1 \\ \tilde{w}_1 \end{pmatrix}^\top \begin{pmatrix} \tilde{Q} & \tilde{S} \\ \tilde{S}^\top & \tilde{R} \end{pmatrix}\begin{pmatrix} \tilde{v}_1 \\ \tilde{w}_1 \end{pmatrix}. 
\end{equation}
Then, with the neutrality condition $(\tilde{v}_1=\tilde{w}_2, \tilde{v}_2=\tilde{w}_1) \implies \tilde{s}_{1,int}(\tilde{v}_1,\tilde{w}_1)+\tilde{s}_{2,int}(\tilde{v}_2,\tilde{w}_2)=0$ accounted for, we have 
\begin{equation}
\label{s2int}
\tilde{s}_{2,int}(\tilde{v}_2,\tilde{w}_2)=\begin{pmatrix} \tilde{v}_2 \\ \tilde{w}_2 \end{pmatrix}^\top \begin{pmatrix} -\tilde{R} & -\tilde{S}^\top \\ -\tilde{S} & -\tilde{Q} \end{pmatrix}\begin{pmatrix} \tilde{v}_2 \\ \tilde{w}_2 \end{pmatrix}.
\end{equation} 

Furthermore, let 
\[s_{i,ext}(d_i,z_i)=\left(\begin{smallmatrix} d_i \\ z_i \end{smallmatrix}\right)^\top \left(\begin{smallmatrix} Q^P_i & S^P_i \\ (S^P_i)^\top & R^P_i \end{smallmatrix}\right)\left(\begin{smallmatrix} d_i \\ z_i \end{smallmatrix}\right),\,\,\,\, i=1,2.\] 
With LMI characterization from Section~\ref{Sec2_B}, strict dissipativity of $\tilde{G}_1$ with respect to $s_{1,ext}(d_1,z_1)+\tilde{s}_{1,int}(\tilde{v}_1,\tilde{w}_1)$ and strict dissipativity of $\tilde{G}_2$ with respect to $s_{2,ext}(d_2,z_2)+\tilde{s}_{2,int}(\tilde{v}_2,\tilde{w}_2)$ are respectively given by the following two inequalities
\begin{multline}
\label{D1}
\begin{pmatrix} \star  \end{pmatrix}^\top
\begin{pmatrix} 0 & P_1 \\ P_1 & 0 \end{pmatrix} 
\begin{pmatrix} I & 0  & 0 \\ A_1 & \tilde{B}_1 & E_1  \end{pmatrix}
\prec
\begin{pmatrix} \star  \end{pmatrix}^\top
\begin{pmatrix} \tilde{Q} & \tilde{S} \\ \tilde{S}^\top & \tilde{R} \end{pmatrix}
\begin{pmatrix} 0 & I & 0 \\ \tilde{C}_1 & 0 & 0  \end{pmatrix} \\
+\begin{pmatrix} \star  \end{pmatrix}^\top
\begin{pmatrix} Q^P_1 & S^P_1 \\ (S^P_1)^\top & R^P_1 \end{pmatrix}
\begin{pmatrix} 0 & 0 & I \\ F_1 & \tilde{K}_1 & L_1  \end{pmatrix},
\end{multline}
\begin{multline}
\label{D2}
\begin{pmatrix} \star  \end{pmatrix}^\top
\begin{pmatrix} 0 & P_2 \\ P_2 & 0 \end{pmatrix} 
\begin{pmatrix} I & 0  & 0 \\ A_2 & \tilde{B}_2 & E_2  \end{pmatrix}
\prec
\begin{pmatrix} \star  \end{pmatrix}^\top
\begin{pmatrix} -\tilde{R} & -\tilde{S}^\top \\ -\tilde{S} & -\tilde{Q} \end{pmatrix}
\begin{pmatrix} 0 & I & 0 \\ \tilde{C}_2 & 0 & 0  \end{pmatrix} \\
+\begin{pmatrix} \star  \end{pmatrix}^\top
\begin{pmatrix} Q^P_2 & S^P_2 \\ (S^P_2)^\top & R^P_2 \end{pmatrix}
\begin{pmatrix} 0 & 0 & I \\ F_2 & \tilde{K}_2 & L_2  \end{pmatrix}.
\end{multline}  

Consider the equality

\begin{equation}
\label{S1}
\begin{pmatrix} \star \end{pmatrix}^\top
\begin{pmatrix} Q & S \\ S^\top & R \end{pmatrix}
\begin{pmatrix} \begin{pmatrix} J_2 \\ I \end{pmatrix} & 0 \\ 
0 & \begin{pmatrix} J_1 \\ I \end{pmatrix} \end{pmatrix}
 = \begin{pmatrix} \tilde{Q} & \tilde{S} \\ \tilde{S}^\top & \tilde{R} \end{pmatrix},
\end{equation}

%
%
which is a linear equation in $Q, S, R$ for some known $\tilde{Q}, \tilde{S}, \tilde{R}$.
Substituting \eqref{S1} into \eqref{D1}, together with \eqref{Cmatrices} and \eqref{BKmatrices}, we obtain
%
%

\begin{equation}
\label{D1A}
\tilde{W}_1^\top
X_1
\underbrace{\begin{pmatrix} I & 0 & 0 \\ 0 & \begin{pmatrix} J_2 \\ I \end{pmatrix} & 0 \\ 0 & 0 & I \end{pmatrix}}_{\tilde{W}_1}\prec
\tilde{W}_1^\top
(Y_1^{int}+Y_1^{ext})
\underbrace{\begin{pmatrix} I & 0 & 0 \\ 0 & \begin{pmatrix} J_2 \\ I \end{pmatrix} & 0 \\ 0 & 0 & I \end{pmatrix}}_{\tilde{W}_1}, 
\end{equation}
where
\begin{align}
&X_1= \begin{pmatrix} I & 0  & 0 \\ A_1 & B_1 & E_1  \end{pmatrix}^\top
\begin{pmatrix} 0 & P_1 \\ P_1 & 0 \end{pmatrix} 
\begin{pmatrix} I & 0  & 0 \\ A_1 & B_1 & E_1  \end{pmatrix},\\
&Y_1^{int}=\begin{pmatrix} 0 & I & 0 \\ C_1 & 0 & 0  \end{pmatrix}^\top
\begin{pmatrix} Q & S \\ S^\top & R \end{pmatrix}
\begin{pmatrix} 0 & I & 0 \\ C_1 & 0 & 0  \end{pmatrix}, \\
&Y_1^{ext}=\begin{pmatrix} 0 & 0 & I \\ F_1 & K_1 & L_1  \end{pmatrix}^\top
\begin{pmatrix} Q^P_1 & S^P_1 \\ (S^P_1)^\top & R^P_1 \end{pmatrix}
\begin{pmatrix} 0 & 0 & I \\ F_1 & K_1 & L_1  \end{pmatrix}.
\end{align}

Similarly, substituting \eqref{S1} into \eqref{D2}, together with \eqref{Cmatrices} and \eqref{BKmatrices}, we obtain

\begin{equation}
\label{D2A}
\tilde{W}_2^\top
X_2
\underbrace{\begin{pmatrix} I & 0 & 0 \\ 0 & \begin{pmatrix} J_1 \\ I \end{pmatrix} & 0 \\ 0 & 0 & I \end{pmatrix}}_{\tilde{W}_2}\prec
\tilde{W}_2^\top
(Y_2^{int}+Y_2^{ext})
\underbrace{\begin{pmatrix} I & 0 & 0 \\ 0 & \begin{pmatrix} J_1 \\ I \end{pmatrix} & 0 \\ 0 & 0 & I \end{pmatrix}}_{\tilde{W}_2}, 
\end{equation}
where
\begin{align}
&X_2= \begin{pmatrix} I & 0  & 0 \\ A_2 & B_2 & E_2  \end{pmatrix}^\top
\begin{pmatrix} 0 & P_2 \\ P_2 & 0 \end{pmatrix} 
\begin{pmatrix} I & 0  & 0 \\ A_2 & B_2 & E_2  \end{pmatrix},\\
&Y_2^{int}=\begin{pmatrix} 0 & I & 0 \\ C_2 & 0 & 0  \end{pmatrix}^\top
\begin{pmatrix} -R & -S^\top \\ -S & -Q \end{pmatrix}
\begin{pmatrix} 0 & I & 0 \\ C_2 & 0 & 0  \end{pmatrix}, \\
&Y_2^{ext}=\begin{pmatrix} 0 & 0 & I \\ F_2 & K_2 & L_2  \end{pmatrix}^\top
\begin{pmatrix} Q^P_2 & S^P_2 \\ (S^P_2)^\top & R^P_2 \end{pmatrix}
\begin{pmatrix} 0 & 0 & I \\ F_2 & K_2 & L_2  \end{pmatrix}.
\end{align}

Our aim is to show that we can always select $Q, S$ and $R$ with \eqref{S1} such that $X_1 \prec Y_1^{int}+Y_1^{ext}$ and $X_2 \prec Y_2^{int}+Y_2^{ext}$. Indeed, the latter two inequalities are precisely the dissipation inequalities stating that  $Q, S$ and $R$ define the interconnection neutral supply functions $s_{1,int}(v_1,w_1)$ and $s_{2,int}(v_2,w_2)$ on the interconnecting signals of $G$.

\medskip

\textbf{\emph{Step 2}: Constructing $Q$.} \newline
Consider first the inequality $X_1 \prec Y_1^{int}+Y_1^{ext}$. The inequality \eqref{D1A} implies that $X_1 \prec Y_1^{int}+Y_1^{ext}$ on $\image \tilde{W}_1$, that is, $x^\top X_1 x < x^\top (Y_1^{int}+Y_1^{ext}) x$ for all $x \in \image \tilde{W}_1$, $x \neq 0$, but not necessarily also for an arbitrary $x\neq 0$. Note that  
$\kernel \tilde{W}_1^\top = \image \tilde{\tilde{W}}_1$ where 
$\tilde{\tilde{W}}_1=\left(\begin{smallmatrix} 0 & \left(\begin{smallmatrix} I & -J_2 \end{smallmatrix}\right) & 0 \end{smallmatrix}\right)^\top$, and
$W_1:=\left(\begin{smallmatrix} \tilde{W}_1 & \tilde{\tilde{W}}_1 \end{smallmatrix}\right)$ is a nonsingular square matrix.

Next, we show that we can always select $Q, S$ and $R$ in \eqref{S1} so that 
\begin{equation}
\label{DesiredInequality1}
W_1^\top X_1 W_1 \prec W_1^\top (Y_1^{int}+Y_1^{ext}) W_1.
\end{equation} 
Since $W_1$ is a nonsingular square matrix, the latter inequality indeed implies $X_1 \prec Y_1^{int}+Y_1^{ext}$.

In addition to \eqref{S1}, we further constrain $Q$ by adding the following relation between $Q$ and $\tilde{Q}$

\begin{equation}
\label{NewConQ}
\begin{pmatrix} \star\end{pmatrix}^\top 
Q
\begin{pmatrix} J_2 & I \\ I & -J_2^\top \end{pmatrix}=
\begin{pmatrix}
\tilde{Q} & 0 \\ 0 & \gamma_Q I -\begin{pmatrix} I \\ -J_2^\top \end{pmatrix}^\top K_1^\top R_1^P K_1 \begin{pmatrix} I \\ -J_2^\top \end{pmatrix} 
\end{pmatrix}
\end{equation}

for some fixed real number $\gamma_Q$. For given $\tilde{Q}$ and $\gamma_Q$, the above equation uniquely defines $Q$. Note that the only constraint on $Q$ from \eqref{S1} is given by 
$\left(\begin{smallmatrix}J_2 \\ I \end{smallmatrix}\right)^\top Q \left(\begin{smallmatrix}J_2 \\ I \end{smallmatrix}\right) =\tilde{Q}$
and is also present in \eqref{NewConQ}. In that sense, uniquely defined $Q$ from \eqref{NewConQ} necessarily satisfies the constraint on $Q$ imposed by \eqref{S1}. 

With $Z_1:=X_1-Y_1^{int}-Y_1^{ext}$, the inequality \eqref{D1A} reads as $\tilde{W}_1^\top Z_1 \tilde{W}_1 \prec 0$, while the inequality \eqref{DesiredInequality1}, which we want to obtain, reads as 
\begin{equation}
\label{DesiredInequality2}
W_1^\top Z_1 W_1= \begin{pmatrix} \tilde{W}_1^\top Z_1 \tilde{W}_1 & \tilde{W}_1^\top Z_1 \tilde{\tilde{W}}_1 \\ \tilde{\tilde{W}}_1^\top Z_1 \tilde{W}_1 & \tilde{\tilde{W}}_1^\top Z_1 \tilde{\tilde{W}}_1 \end{pmatrix} \prec 0.
\end{equation}
Note that $\tilde{\tilde{W}}_1^\top Z_1 \tilde{\tilde{W}}_1=\left(\begin{smallmatrix} I \\ -J_2^\top \end{smallmatrix}\right)^\top (-Q-K_1^\top R_1^P K_1)\left(\begin{smallmatrix} I \\ -J_2^\top \end{smallmatrix}\right)$, which with \eqref{NewConQ} implies $\tilde{\tilde{W}}_1^\top Z_1 \tilde{\tilde{W}}_1=-\gamma_Q I$. Due to this fact, and since  $\tilde{W}_1^\top Z_1 \tilde{W}_1\prec 0$, the inequality \eqref{DesiredInequality2} can always be rendered feasible by taking sufficiently large positive real number $\gamma_Q$. To summarize, with sufficiently large $\gamma_Q$, the equation \eqref{NewConQ} gives us the parameter matrix $Q$ for the neutral supply rate within $G$, starting from the parameter matrix $\tilde{Q}$ of the system $\tilde{G}$ with modified interconnections.

\medskip

\textbf{\emph{Step 3}: Constructing $R$.} \newline
The inequality $X_2 \prec Y_2^{int}+Y_2^{ext}$ follows by symmetry and as a result gives us the following conditions which relate $R$ with $\tilde{R}$: 

\begin{equation}
\label{NewConR}
\begin{pmatrix} \star \end{pmatrix}^\top 
R
\begin{pmatrix} J_1 & I \\ I & -J_1^\top \end{pmatrix}=
\begin{pmatrix}
\tilde{R} & 0 \\ 0 & -\gamma_R I+\begin{pmatrix} I \\ -J_1^\top \end{pmatrix}^\top K_2^\top R_2^P K_2 \begin{pmatrix} I \\ -J_1^\top \end{pmatrix}
\end{pmatrix}
\end{equation}

for some sufficiently large positive real number $\gamma_R$.
The procedure is completely analogous to the one for $Q$, i.e., with definitions $W_2:=\left(\begin{smallmatrix} \tilde{W}_2 & \tilde{\tilde{W}}_2 \end{smallmatrix}\right)$,$\tilde{\tilde{W}}_2=\left(\begin{smallmatrix} 0 & \left(\begin{smallmatrix} I & -J_1 \end{smallmatrix}\right) & 0 \end{smallmatrix}\right)^\top$, we can always select $\gamma_R$ in \eqref{NewConR} to impose the inequality $W_2^\top X_2 W_2 \prec W_2^\top (Y_2^{int}+Y_2^{ext}) W_2$. 

\medskip

\textbf{\emph{Step 4}: Constructing $S$.} \newline
Finally, to complete the proof, note that conditions $W_1^\top X_1 W_1 \prec W_1^\top (Y_1^{int}+Y_1^{ext}) W_1$ and $W_2^\top X_2 W_2 \prec W_2^\top (Y_2^{int}+Y_2^{ext}) W_2$, with \eqref{NewConQ} and \eqref{NewConR}, do not impose any additional constraints on $S$, that is, the only constraints on $S$ that we consider are the ones imposed by \eqref{S1}, and it is easy to see that they always have a solution. More precisely, \eqref{S1} gives $\left(\begin{smallmatrix} J_2^\top & I \end{smallmatrix}\right)
S
\left(\begin{smallmatrix} J_1 \\ I \end{smallmatrix}\right) = \tilde{S} $ as the relation between $S$ and $\tilde{S}$, which always has a solution in $S$ for any given $\tilde{S}$.
\end{proof}

\subsection{Proof of Proposition~\ref{Robust}}
\label{Sec6_C}
\begin{proof}[Proof of the part (a)]

\textbf{\emph{Step 0}: The main idea and overview.} \newline
\underline{\emph{The main idea}}: The presented proof is based on the proof of Theorem~\ref{OpenSeparationProposition_53}. The proof of Theorem~\ref{OpenSeparationProposition_53} is constructive in a sense that starting from existence of a general (full-block) multiplier $\Pi$ in \eqref{MultiplierPi}, we are able to construct a structured multiplier $\Pi_D =\left(\begin{smallmatrix} Q^D & S^D \\ (S^D)^\top & R^D \end{smallmatrix}\right)$ with $Q^D=\diag(Q^D_1,Q^D_2)$, $S^D=\diag(S^D_1,S^D_2)$, $R^D=\diag(R^D_1,R^D_2)$ (see \eqref{MultiplierPiNew}), which defines the interconnection neutral supply rates as $Q^D_2=-R^D_1$, $S^D_2=-(S^D_1)^\top$, $R^D_2=-Q^D_1$. Our goal here is to show that with $D=\diag(D_1,D_2)=0$, $K=\diag(K_1,K_2)=0$ and $L=\diag(L_1,L_2)=0$, there exists a multiplier $\Pi_D$ which, in addition to the above properties, also satisfies the condition
\begin{equation}
 \label{C} 
R^D = \begin{pmatrix} R^D_1 & 0 \\ 0& R^D_2 \end{pmatrix}\succeq 0. 
\end{equation}
Indeed, then the supply functions
\begin{equation} 
\label{sIint}
s_{i,int}:=-\left(\begin{smallmatrix} v_i \\ w_i \end{smallmatrix}\right)^\top\left(\begin{smallmatrix} Q^D_i & S^D_i \\ (S^D_i)^\top & R^D_i \end{smallmatrix}\right)\left(\begin{smallmatrix} v_i \\ w_i \end{smallmatrix}\right), \quad  i=1,2,
\end{equation}
are interconnection neutral supply functions, while \eqref{C} implies \eqref{EquNegative}. 

\underline{\emph{Starting point and overview}}:
Note that with $D=0$, from \eqref{MultNew} we have
\begin{equation}
\label{ReducedCon}
R^D_1=-\alpha \cR_1 + (1-\alpha) \cQ_2, \quad R^D_2=\alpha \cQ_1 - (1-\alpha) \cR_2,
\end{equation}
where $0<\alpha<1$ and $\cR_1, \cR_2, \cQ_1, \cQ_2$ are defined in \eqref{cc}.
In the remainder of the proof we show that
\begin{equation}
\cR:=\begin{pmatrix} \cR_1 & 0 \\ 0 & \cR_2 \end{pmatrix} \preceq 0, \quad \cQ:=\begin{pmatrix} \cQ_1 & 0 \\ 0 & \cQ_2 \end{pmatrix} \succeq 0, 
\end{equation}
and therefore by \eqref{ReducedCon} we have \eqref{C}. This is done in 3 steps. For the first two steps, we assume that $C_1$ and $C_2$ are full row rank matrices. In \emph{Step~1} we prove the inequality $\cQ\succeq 0$, while in \emph{Step~2} we prove the inequality $\cR\preceq 0$. In \emph{Step~3} we relax the rank assumptions on $C_1$ and $C_2$.

\medskip

\textbf{\emph{Step 1}: Proving $\cQ\succeq 0$.} \newline  
From \eqref{cc} we have $\cQ=\hat{Q}$, while from \eqref{NewLabel_2} we have $\hat{Q}=\breve{Q}_{I} +\breve{Q}_{II}$. With $D=0, K=0, L=0$, from \eqref{NewLabel_1} we get 
\begin{subequations}
\label{Qdef}
\begin{align}
& \breve{Q}_I=-N_A^\top V (V^\top M V)^{-1} V^\top N_A, \label{QI} \\
& \breve{Q}_{II}=-\Big(\star \Big)^\top Z^{-1}\Big(N_B^\top V (V^\top M V)^{-1} V^\top N_A \Big), \label{QII}
\end{align}
\end{subequations}
where $Z =N_D-N_B^\top V (V^\top M V)^{-1} V^\top N_B $, $M:=A^\top P+PA+F^\top R^P F$, $N_A:=PB$, $N_B:=PE+F^\top(S^P)^\top$ and $N_D:=Q^P$.

Next we show that $M \prec 0$ and $Z \prec 0$. From there we conclude that $\breve{Q}_I \succeq 0$ (from  \eqref{QI} with $M \prec 0$) and $\breve{Q}_{II} \succeq 0$ (from \eqref{QII} with $Z \preceq 0$), and therefore $\cQ=\hat{Q} \succeq 0$.

The inequality $M \prec 0$ follows directly from \eqref{NN2_53}, since after the multiplication of the matrices in \eqref{NN2_53}, the matrix $M$ appears as a block on diagonal of the matrix on the left hand side of the inequality \eqref{NN2_53}.
The inequality $Z \prec 0$ has already been inferred in the proof of Theorem~\ref{OpenSeparationProposition_53}, see \eqref{DisCompactA}.  
   
\medskip

\textbf{\emph{Step 2}: Proving $\cR\preceq 0$.} \newline 
This part of the proof is somewhat less straightforward and relies on Lemma~\ref{Lemma1} presented in Appendix~\ref{Appendix_B}. This lemma implies that, with $D=0$, $K=0$, $L=0$, there exists a multiplier $\Pi$ from \eqref{MultiplierPi} such that \eqref{NN1_53} and \eqref{NN2_53} hold not only for $H=\left(\begin{smallmatrix} 0 & I \\ I & 0 \end{smallmatrix}\right)$, but also for all 
\[H \in \mathbf{H}:=\Big\{ \begin{pmatrix} 0 & \alpha I \\ \alpha I & 0  \end{pmatrix} \,\, : \,\, \alpha \in [0,1] \Big\}. \]
In that case from \eqref{NN1_53} we have $R \succeq 0$. To see this, take the element from $\mathbf{H}$ with $\alpha =0$ and apply \eqref{NN1_53}. We conclude that there necessarily exists a multiplier $\Pi$ from \eqref{MultiplierPi} such that \eqref{NN1_53} and \eqref{NN2_53} hold, while $R\succeq 0$.

With $D=0$ the condition \eqref{QSRinequality_53} becomes
\[\begin{pmatrix} Q & S \\ S^\top & R \end{pmatrix} + \begin{pmatrix}  \cQ & \cS \\ \cS^\top & \cR  \end{pmatrix} \prec 0.\]
Since $R \succeq 0$, we have $\cR \preceq 0$.

\medskip

\textbf{\emph{Step 3}: Relaxing the full row rank assumption for $C_1$ and $C_2$.} \newline 
We make use of the proof of Proposition~\ref{PropositionRank}. Recall that in the proof of Proposition~\ref{PropositionRank} we have first concluded existence of interconnection supply rates on the modified system which had full row rank matrices $\tilde{C}_1$ and $\tilde{C}_2$. The interconnection signals in the modified system had lower dimension than the corresponding interconnection signals of the original systems. Then we showed that starting from the matrices $\tilde{Q}$, $\tilde{R}$ and $\tilde{S}$, which define the interconnection neutral supply functions of the modified system, as presented in \eqref{s1int} and \eqref{s2int}, we can construct interconnection neutral supply functions for the original systems. These supply functions are defined through matrices $Q$, $R$ and $S$, which are respectively ``extensions'' of the matrices $\tilde{Q}$, $\tilde{R}$ and $\tilde{S}$. The extensions for $Q$ and $R$ are given by \eqref{NewConQ}, and \eqref{NewConR}, respectively. Comparing \eqref{s1int} and \eqref{s2int} with \eqref{sIint}, we have that $R_1^D$ and $R_2^D$ from \eqref{C} correspond respectively to $-\tilde{R}$ and $\tilde{Q}$ from \eqref{s1int} and \eqref{s2int}. To finalize the proof for relaxing the rank conditions of $C_i$ it is therefore sufficient to show that \emph{i}) using extension \eqref{NewConQ} on $\tilde{Q} \succeq 0$ we can obtain $Q$ from \eqref{NewConQ} such that $Q \succeq 0$; and \emph{ii}) using extension \eqref{NewConR} on $\tilde{R} \preceq 0$ we can obtain $R$ from \eqref{NewConR} such that $R \preceq 0$. Indeed, both (i) and (ii) can be achieved when selecting sufficiently large $\gamma_Q$ and $\gamma_R$, since the matrices in \eqref{NewConQ} and \eqref{NewConR} are related with congruence transformations. \\ 
\newline

\emph{Proof of the part (b):}

\textbf{\emph{Step 0}: The main idea and overview.} \newline
We present proof in the case of full row rank matrices $C_1$ and $C_2$. The proof for relaxation of this assumption is fully analogous to the proof presented for the part (a) above.

\emph{\underline{The main idea and overview}}: Following the same path as in the proof for the part (a) presented above, the proof again boils down to proving inequality \eqref{C}. Note that \eqref{C} is satisfied if $\cR \preceq 0$ and $\cQ \succeq 0$, due to \eqref{ReducedCon}.

The remaining part of the proof is divided in 3 steps. In \emph{Step~1} we give an auxiliary result which will be instrumental in both \emph{Step~2} and \emph{Step~3}, in which the inequalities $\cR \preceq 0$ and $\cQ \succeq 0$ are proven, respectively.

\medskip

\textbf{\emph{Step 1}: Preliminaries and auxiliary results} \newline
Recall that the proof of Corollary~\ref{Cor1} follows directly from the proof of Theorem~\ref{OpenSeparationProposition_53} when we omit all the terms related to exogenous inputs/outputs $d_1, d_2, z_1, z_2$ and the external supply functions $s_{1,ext}(\cdot,\cdot)$ and $s_{2,ext}(\cdot,\cdot)$, that is, by taking 
\begin{equation}
\label{ZeroMatrices}
E=0,\, F=0,\, K=0,\, L=0,\, Q^P=0,\, S^P=0,\, R^P=0
\end{equation}
in \eqref{NN2_53} and in the remainder of the proof of Theorem~\ref{OpenSeparationProposition_53}. 
Additionally, the statement of Corollary~\ref{Cor1} requires $P=\diag(P_1,P_2) \succ 0$, where $P_1$ and $P_2$ are the matrices defining $V_1(\cdot)$ and $V_2(\cdot)$, i.e., $V_1(x_1)=x_1^\top P_1 x_1$, $V_2(x_2)=x_2^\top P_2 x_2$.
Finally, we consider the case when $D=\diag(D_1,D_2)=0$.  
  

From \eqref{labelM}, with \eqref{ZeroMatrices},  we have

\begin{equation}
M=A^\top P +P A=\begin{pmatrix} A_1^\top P_1 + P_1 A_1 & 0 \\ 0 & A_2^\top P_2 + P_2 A_2\end{pmatrix}=:\begin{pmatrix} M_1 & 0 \\ 0 & M_2 \end{pmatrix}. 
\end{equation} 
  
Next we show that $M\prec 0$. Let $x=\col(x_1,x_2)$ denote the state vector of the interconnected system, which can be represented as $\dot{x}=\cA x$ with $\cA=\left(\begin{smallmatrix} A_1 & B_1 C_2 \\ B_2 C_1 & A_2 \end{smallmatrix}\right)$. Since $V(x)=x^\top P x$ is a Lyapunov function for the interconnected system, we have $\cA^\top P + P \cA \prec 0$. It remains to note that $\cA^\top P + P \cA=\left(\begin{smallmatrix} M_1 & * \\ * & M_2 \\  \end{smallmatrix}\right)$ (here $*$ denote submatrices which are not of interest) and therefore we conclude that $M_1 \prec 0$, $M_2 \prec 0$, which implies $M \prec 0$.

\medskip

\textbf{\emph{Step 2}: Proving $\cQ\succeq 0$.} \newline
From \eqref{cc}, \eqref{NewLabel_2}, \eqref{NewLabel_1} we have $\cQ=\breve{Q}_I+\breve{Q}_{II}$, which with \eqref{ZeroMatrices} reduces to
\begin{equation}
\cQ=-N_A^\top V(V^\top M V)^{-1} V^\top N_A
\end{equation}
with $N_A=PB$. We have $M\prec 0 \,\, \implies (V^\top M V)^{-1} \prec 0 \implies \cQ \succeq 0$.

\medskip

\textbf{\emph{Step 3}: Proving $\cR\preceq 0$.} \newline
It is possible to prove this inequality following a similar path as done in the proof for the part (a). Here we present an alternative and somewhat more direct proof. From \eqref{ccA} and \eqref{LRL} we have that $\cR\preceq 0$ is implied if $\hat{R} \preceq 0$. Note that $\hat{R}$ is defined in \eqref{NewLabel_2}. With \eqref{ZeroMatrices}, from \eqref{NewLabel_2} and \eqref{NewLabel_1} we have
\begin{equation}
\label{hatR1}
\hat{R}=M-MV(V^\top M V)^{-1}V^\top M,
\end{equation}  
where $V$ is a tall matrix\footnote{The number of rows in larger than the number of columns.} with full column rank. Indeed, recall from \emph{Step~1} in the proof of Theorem~\ref{OpenSeparationProposition_53} that $V:=\diag(V_1,V_2)$, where $V_1$ and $V_2$ are defined as full column rank matrices whose columns span the kernel spaces of $C_1$ and $C_2$, respectively. 

Since $M\prec 0$, we have
\begin{equation}
X:=\begin{pmatrix}M & MV \\ V^\top M & V^\top M V\end{pmatrix}
=\begin{pmatrix} \star \end{pmatrix}^\top
\begin{pmatrix} M & 0 \\ 0 & M \end{pmatrix}
\begin{pmatrix} I & 0 \\ 0 & V \end{pmatrix} \prec 0.
\end{equation}  
The above inequality implies, via the Schur complement rule, that $M-MV(V^\top M V)^{-1}V^\top M \prec 0$, that is, $\hat{R} \prec 0$. Therefore, we also have the desired (weaker) inequality $\hat{R} \preceq 0$.
\end{proof} 



\section{CONCLUSIONS}
\label{Sec7}
The main results of this paper give insights into interplay between structured storage/Lyapunov functions for a class of interconnected systems and dissipativity properties of the individual systems. These results complement some of the results from the seminal papers \cite{Willems_1, Willems_2} of Jan Willems, making suitable converse statements to Theorem~5 in \cite{Willems_1}. More precisely, we have proven that if a dynamical network, composed as a set of linear time invariant systems interconnected over an acyclic graph, admits an additive Lyapunov function, then the individual systems in the network are dissipative with respect to a (nonempty) set of interconnection neutral supply functions. Each supply function from this set is defined on a single interconnection link in the network. 

From a more practical/application oriented point of view, the presented results can be used to relate certain analysis or controller synthesis approaches for uncertain and/or large scale systems, precisely by relating structural properties of Lyapunov functions with dissipativity properties. Indeed, while some control synthesis approaches build on employment of additive Lyapunov functions (see, e.g., \cite{Siljak, Zheng}), others  directly use interconnection neutral supply functions, e.g., \cite{Langbort}. Furthermore, the relation between interconnection neutral supply functions and robustness properties, which have also be presented in this paper, has a  potential to become a constructive element in both analysis and control synthesis. 




\section*{Appendix A: Robust stability and robust dissipativity}
\label{Appendix_A}
Let the system from Figure~\ref{RobPerf}~a) be defined by
\begin{equation}
\label{Sys_closedA}
\tilde{G}_0: \quad \begin{pmatrix}\dot{x} \\ w  \end{pmatrix} 
=
\begin{pmatrix} A & B  \\ C & D  \end{pmatrix}
\begin{pmatrix} x \\ v \end{pmatrix},
\end{equation}
and
\begin{equation}
\label{Sys_closedB}
v  = H w, \quad H\in \mathbf{H},
\end{equation}
where $A \in \Rset^{n\times n}, D\in \Rset^{n_w \times n_v}$, and $\mathbf{H}$ is a set of matrices in $\Rset^{n_v \times n_w}$. Let $\tilde{G}$ denote the overall interconnected system given by \eqref{Sys_closedA}, \eqref{Sys_closedB}. Furthermore, let the system from Figure~\ref{RobPerf}~b) be defined by
\begin{equation}
\label{SysRobPerfA1}
G_0: \quad \begin{pmatrix}\dot{x} \\ w \\ z \end{pmatrix} 
=
\begin{pmatrix} A & B & E \\ C & D & M \\ K & F & L\end{pmatrix}
\begin{pmatrix} x \\ v \\ d \end{pmatrix},
\end{equation}
and
\begin{equation}
\label{SysRobPerfB}
v  = H w, \quad H\in \mathbf{H},
\end{equation}
where $A \in \Rset^{n\times n}, D\in \Rset^{n_w \times n_v}, L \in \Rset^{n_z \times n_d}$ and $\mathbf{H}$ is a set of matrices in $\Rset^{n_v \times n_w}$. Let $G$ denote the overall interconnected system given by \eqref{SysRobPerfA1}, \eqref{SysRobPerfB}, with $d$ and $z$ as input and output, respectively.

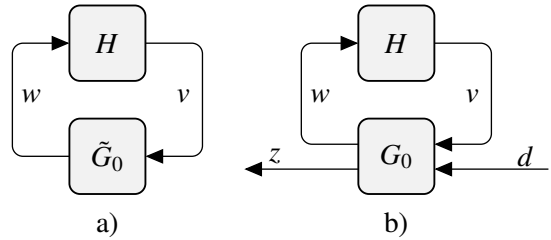
\begin{figure}[h!]
\centering
 \begin{tikzpicture}


\draw [line width=0.7pt, rounded corners] (7-3.8,2) rectangle (8-3.8,3);
\draw [line width=0.7pt, rounded corners] (7-3.8,3.5) rectangle (8-3.8,4.5);
\draw [line width=0.5pt, ->, >= triangle 45, rounded corners] (7-3.8,2.5) -- (6.25-3.8,2.5) -- (1.25-3.8+5,4)--(7-3.8,4);
\draw [line width=0.5pt, ->, >= triangle 45, rounded corners] (8-3.8,4)-- (8.75-3.8,4) -- (8.75-3.8,2.5)-- (8-3.8,2.5) ;
\node at (7.5-3.8,2.5) {$\tilde{G}_0$};
\node at (7.5-3.8,4) {$H$};
\node at (6.5-3.8,3.3) {$w$};
\node at (8.5-3.8,3.3) {$v$};
\begin{pgfonlayer}{background}
\filldraw [line width=0.1mm,rounded corners,black!5] (7-3.8,2) rectangle (8-3.8,3);
\end{pgfonlayer}
\begin{pgfonlayer}{background}
\filldraw [line width=0.1mm,rounded corners,black!5] (7-3.8,3.5) rectangle (8-3.8,4.5);
\end{pgfonlayer} 
 \node at (7.5-3.8,1.6) {a)};
\draw [line width=0.7pt, rounded corners] (7,2) rectangle (8,3);
\draw [line width=0.7pt, rounded corners] (7,3.5) rectangle (8,4.5);
\draw [line width=0.5pt, ->, >= triangle 45, rounded corners] (7,2.33) -- (5.5,2.33);
\draw [line width=0.5pt, ->, >= triangle 45, rounded corners] (9.5,2.33) -- (8,2.33);
\draw [line width=0.5pt, ->, >= triangle 45, rounded corners] (7,2.66) -- (6.25,2.66) -- (1.25+5,4)--(7,4);
\draw [line width=0.5pt, ->, >= triangle 45, rounded corners] (8,4)-- (8.75,4) -- (8.75,2.66)-- (8,2.66) ;
\node at (7.5,2.5) {$G_0$};
\node at (7.5,4) {$H$};
\node at (9.2,2.5) {$d$};
\node at (5.9,2.5) {$z$};
\node at (6.5,3.3) {$w$};
\node at (8.5,3.3) {$v$};
\node at (7.5,1.6) {b)};
\begin{pgfonlayer}{background}
\filldraw [line width=0.1mm,rounded corners,black!5] (7,2) rectangle (3+5,3);
\end{pgfonlayer}
\begin{pgfonlayer}{background}
\filldraw [line width=0.1mm,rounded corners,black!5] (7,3.5) rectangle (3+5,4.5);
\end{pgfonlayer}
 \end{tikzpicture}
\caption{a) Autonomous uncertain system; b) Uncertain open system.}
\label{RobPerf}
\end{figure}
The system $\tilde{G}$ is robustly exponentially stable if it is well-posed and if there exists a quadratic function $V(x)=x^\top P x$ with $P\succ 0$ such that $\dot{V}(x(t))<0$ for all $x(t)\neq 0$ and for all $H \in \mathbf{H}$.
The system $G$ is robustly strictly dissipative with respect to the supply function 
\begin{equation}
\label{SupplyPerformance}
s(d,z)=\begin{pmatrix}
  d\\
  z\\
\end{pmatrix}^{\top}
\begin{pmatrix}
  Q_P & S_P\\
  S_P^{\top} & R_P\\
\end{pmatrix}
\begin{pmatrix}
  d\\
  z\\
\end{pmatrix}
\end{equation}
if it is well-posed and if there exists a quadratic storage function $V(x)=x^\top P x$, such that $\dot{V}(x(t)) < s(d(t),z(t))$ at each time $t$, for all $H \in \mathbf{H}$ and for all $\col(x(t),d(t))\neq 0$.
The system $G$ is robustly exponentially stable if it is well-posed and if there exists a quadratic function $V(x)=x^\top P x$ with $P\succ 0$ such that $\dot{V}(x(t))<0$ for all $x(t)\neq 0$, for $d(t)=0$ and for all $H \in \mathbf{H}$.

\begin{theorem}
\label{RobStability}
Suppose that $\mathbf{H}$ is a compact set. Then the system $\tilde{G}$ is robustly exponentially stable if and only if there exist symmetric matrices $P\succ 0$, $Q$, $R$ and a real matrix $S$ such that the following inequalities are satisfied

\begin{subequations}
\label{StabLMI}
\begin{align}
 & \quad \quad \quad \,\,\, \begin{pmatrix} H \\ I \end{pmatrix}^\top
 \begin{pmatrix} Q & S \\ S^\top & R \end{pmatrix} 
 \begin{pmatrix} H \\ I \end{pmatrix} \succeq 0, , \quad \text{for all}\,\, H \in \mathbf{H},\label{StabLMIa} \\
&\left(\begin{array}{c} 
\star 
\end{array}\right)^\top
\left(\begin{array}{cc:cc} 
0 & P & 0 & 0  \\ 
P & 0 & 0 & 0  \\ \hdashline 
0 & 0 & Q & S  \\
0 & 0 & S^\top & R
\end{array}\right)
\left(\begin{array}{cc} 
I & 0 \\ 
A & B  \\ \hdashline 
0 & I  \\
C & D 
\end{array}\right) \prec 0.  \label{StabLMIb}
\end{align}
\end{subequations}
 
\end{theorem}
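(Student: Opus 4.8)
The plan is to prove the two implications separately: the ``if'' direction by a direct (lossy) S-procedure argument, and the ``only if'' direction by appealing to the non-conservative (lossless) full-block S-procedure of \cite{SchererLPV, SchererFromBook}. For sufficiency, suppose $P \succ 0$, $Q$, $R$, $S$ satisfy \eqref{StabLMIa} and \eqref{StabLMIb}, and fix an arbitrary $H \in \mathbf{H}$. Along any trajectory of the closed loop \eqref{Sys_closedA}, \eqref{Sys_closedB} we have $v = Hw$, so $\col(v,w) = \left(\begin{smallmatrix} H \\ I \end{smallmatrix}\right) w$ and hence, by \eqref{StabLMIa}, the quadratic form $\col(v,w)^\top \Pi \col(v,w) = w^\top \left(\begin{smallmatrix} H \\ I \end{smallmatrix}\right)^\top \Pi \left(\begin{smallmatrix} H \\ I \end{smallmatrix}\right) w \ge 0$, where $\Pi = \left(\begin{smallmatrix} Q & S \\ S^\top & R \end{smallmatrix}\right)$. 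On the other hand, expanding the left-hand side of \eqref{StabLMIb} shows that it is exactly the statement $\dot V(x) + \col(v,w)^\top \Pi \col(v,w) < 0$ for every $\col(x,v) \neq 0$ satisfying $w = Cx+Dv$. Combining the two, along the closed loop we obtain $\dot V(x) < -\col(v,w)^\top \Pi \col(v,w) \le 0$ whenever $x \neq 0$ (note $x \neq 0$ forces $\col(x,v) \neq 0$). Since the strict LMI \eqref{StabLMIb} does not depend on $H$, it furnishes an $\epsilon > 0$, uniform over $\mathbf{H}$, with $\dot V(x) \le -\epsilon \|x\|^2$; together with $P \succ 0$ this yields robust exponential stability.

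For necessity, suppose $\tilde G$ is robustly exponentially stable, so there is $P \succ 0$ with $\dot V(x) < 0$ along the closed loop for every $x \neq 0$ and every $H \in \mathbf{H}$. Writing $\mathcal{L} := \left(\begin{smallmatrix} A^\top P + PA & PB \\ B^\top P & 0 \end{smallmatrix}\right)$, this is the assertion that $\col(x,v)^\top \mathcal{L}\, \col(x,v) < 0$ on the graph subspace $\{\col(x,v) : v = (I-HD)^{-1}HCx\}$ for each $H \in \mathbf{H}$, where well-posedness guarantees $(I-HD)$ is invertible. This is precisely the hypothesis required to invoke the full-block S-procedure: since $\mathbf{H}$ is compact, that result is lossless and produces a single multiplier $\Pi = \left(\begin{smallmatrix} Q & S \\ S^\top & R \end{smallmatrix}\right)$ certifying \eqref{StabLMIa} on the uncertainty set together with the main inequality $\mathcal{L} + \left(\begin{smallmatrix} 0 & I \\ C & D \end{smallmatrix}\right)^\top \Pi \left(\begin{smallmatrix} 0 & I \\ C & D \end{smallmatrix}\right) \prec 0$, which is \eqref{StabLMIb} in compact form.

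The routine part is the sufficiency direction, which is the standard quadratic-constraint argument and is essentially the content already used in Propositions~\ref{Prop1} and \ref{Prop2}. The main obstacle is necessity: the non-trivial point is that the S-procedure is lossless here, i.e.\ that a separating multiplier $\Pi$ of the stated block form actually exists rather than merely being sufficient. This is exactly where compactness of $\mathbf{H}$ is essential, and I would supply it by citing the full-block S-procedure of \cite{SchererLPV, SchererFromBook} rather than re-deriving the underlying convex-separation argument from scratch.
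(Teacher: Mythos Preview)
Your proposal is correct and matches the paper's own treatment: the paper does not give an independent proof of this theorem but simply states that it ``can be directly found or deduced'' from the full-block S-procedure results in \cite{SchererLPV, SchererFromBook}, which is exactly what you do for the necessity direction, while your sufficiency sketch is the standard quadratic-constraint argument. The only small omission is that you do not explicitly verify well-posedness in the sufficiency direction (it is part of the definition of robust exponential stability); this follows routinely from \eqref{StabLMIb} restricted to $x=0$, which gives $\left(\begin{smallmatrix} I \\ D \end{smallmatrix}\right)^\top \Pi \left(\begin{smallmatrix} I \\ D \end{smallmatrix}\right) \prec 0$, and combined with \eqref{StabLMIa} this forces $I-HD$ to be nonsingular for every $H\in\mathbf{H}$.
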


\begin{theorem}
\label{PerformanceSProcedure}
Let $Q_P, S_P, R_P$ be given real matrices, where $Q_P$ and $R_P$ are symmetric, and suppose $\mathbf{H}$ is a compact set. Consider the following inequalities

\begin{subequations}
\label{PerfLMI2}
\begin{align}
 & \quad \quad \quad \,\,\, \begin{pmatrix} H \\ I \end{pmatrix}^\top
 \begin{pmatrix} Q & S \\ S^\top & R \end{pmatrix} 
 \begin{pmatrix} H \\ I \end{pmatrix} \succeq 0, , \quad \text{for all}\,\, H \in \mathbf{H},\label{PerfLMI2a} \\
&\left(\begin{array}{ccc} 
\star 
\end{array}\right)^\top
\left(\begin{array}{cc:cc:cc} 
0 & P & 0 & 0 & 0 & 0 \\ 
P & 0 & 0 & 0 & 0 & 0 \\ \hdashline 
0 & 0 & Q & S & 0 & 0 \\
0 & 0 & S^\top & R & 0 & 0 \\ \hdashline 
0 & 0 & 0 & 0 & -Q_P & -S_P \\
0 & 0 & 0 & 0 & -S_P^\top & -R_P
\end{array}\right)
\left(\begin{array}{ccc} 
I & 0 & 0 \\ 
A & B & E \\ \hdashline 
0 & I & 0 \\
C & D & M \\ \hdashline 
0 & 0 & I\\
K & F & L 
\end{array}\right) \prec 0.  \label{PerfLMI2b}
\end{align}
\end{subequations}
 
\begin{enumerate}[(a)]
\item Suppose $-R_P \succeq 0$. Then the system $G$ is well-posed, robustly exponentially stable and robustly strictly dissipative with respect to supply \eqref{SupplyPerformance}, if and only if there exist symmetric matrices $P\succ 0$, $Q$ and $R$ and a real matrix $S$ such that the inequalities \eqref{PerfLMI2} hold.
\item Suppose $G$ is well-posed. Then the system $G$ is robustly strictly dissipative with respect to supply \eqref{SupplyPerformance} if and only if there exist symmetric matrices $P$, $Q$ and $R$ and a real matrix $S$ such that the inequalities \eqref{PerfLMI2} hold.
\end{enumerate}
\end{theorem}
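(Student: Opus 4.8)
The plan is to interpret the large LMI \eqref{PerfLMI2b} as a single \emph{lifted} dissipation inequality and then to transfer between robust dissipativity and feasibility of the multiplier inequalities by means of the full-block S-procedure of \cite{SchererLPV,SchererFromBook}. First I would carry out the congruence product in \eqref{PerfLMI2b} along \eqref{SysRobPerfA1}, treating $\col(x,v,d)$ as free and substituting the induced values of $\dot x$, $w$ and $z$. Writing $V(x)=x^\top Px$, $\Pi=\left(\begin{smallmatrix} Q & S \\ S^\top & R\end{smallmatrix}\right)$ and $s_\Pi(v,w):=\left(\begin{smallmatrix} v \\ w\end{smallmatrix}\right)^\top\Pi\left(\begin{smallmatrix} v \\ w\end{smallmatrix}\right)$, the three diagonal blocks of the middle matrix contribute, respectively, $\dot V(x)$, $s_\Pi(v,w)$ and $-s(d,z)$, so that \eqref{PerfLMI2b} is exactly the statement
\[
\dot V(x)+s_\Pi(v,w)-s(d,z)<0\qquad\text{for all }\col(x,v,d)\neq 0 .
\]
In the same notation \eqref{PerfLMI2a} says precisely that $s_\Pi(v,w)\geq 0$ whenever $\col(v,w)\in\image\left(\begin{smallmatrix} H \\ I\end{smallmatrix}\right)$ for some $H\in\mathbf{H}$, i.e. $s_\Pi$ is a multiplier that is nonnegative on every interconnection graph.

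For the sufficiency (``if'') direction in both (a) and (b) I would argue directly on the interconnected system. Along any trajectory one has $v=Hw$, hence $\col(v,w)\in\image\left(\begin{smallmatrix} H\\I\end{smallmatrix}\right)$ and \eqref{PerfLMI2a} gives $s_\Pi(v,w)\geq 0$; combined with the rewritten \eqref{PerfLMI2b} this yields $\dot V-s\leq\dot V+s_\Pi-s<0$, which is robust strict dissipativity (well-posedness makes $v$ a function of $\col(x,d)$, so $\col(x,d)\neq 0$ forces $\col(x,v,d)\neq 0$). For part (a) the two extra conclusions follow from the sign of the supply: setting $d=0$ the dissipation inequality gives $\dot V<z^\top R_P z\leq 0$ for $x\neq 0$, since $-R_P\succeq 0$, which together with $P\succ 0$ and compactness of $\mathbf{H}$ gives uniform (robust exponential) stability; and well-posedness follows by contradiction, for if $I-DH$ were singular for some $H\in\mathbf{H}$ there would be a nonzero static loop signal $\col(v,w)=\left(\begin{smallmatrix} H\\I\end{smallmatrix}\right)w$ with $x=0$, $d=0$, which by \eqref{PerfLMI2a} satisfies $s_\Pi(v,w)\geq 0$ but by \eqref{PerfLMI2b} satisfies $s_\Pi(v,w)<z^\top R_P z\leq 0$, a contradiction.

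The necessity (``only if'') direction is the heart of the matter and the step I expect to be hardest. Here robust strict dissipativity furnishes a single $P$ such that, for every $H\in\mathbf{H}$, the quadratic form $\dot V-s$ is negative on $\image\left(\begin{smallmatrix} H\\I\end{smallmatrix}\right)$ after $v$ is eliminated through the well-posed loop; what is required is one full-block multiplier $\Pi$ certifying all these subspace-restricted negativities simultaneously, that is, satisfying \eqref{PerfLMI2a} together with the strict lifted inequality \eqref{PerfLMI2b}. This is exactly the lossless (necessity) half of the full-block S-procedure of \cite{SchererLPV,SchererFromBook}, whose applicability rests on the compactness of $\mathbf{H}$; in the singleton case that drives the main theorems it collapses to the elimination/dualization lemma on the single subspace $\image\left(\begin{smallmatrix} H\\I\end{smallmatrix}\right)$, which is lossless with no further hypotheses. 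Finally, to separate the two parts, in (b) the procedure returns a symmetric $P$ with no definiteness constraint, whereas in (a) the $P$ certifying robust exponential stability is positive definite, so the multiplier LMIs \eqref{PerfLMI2} hold with $P\succ 0$ and the equivalence closes.
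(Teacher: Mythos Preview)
Your approach is essentially the same as the paper's: the paper does not give its own proof of this theorem but states that it ``can be directly found or deduced'' from the full-block S-procedure of \cite{SchererLPV,SchererFromBook}, which is exactly the machinery you invoke, and your unpacking of \eqref{PerfLMI2b} as the lifted dissipation inequality $\dot V+s_\Pi-s<0$ together with the multiplier condition \eqref{PerfLMI2a} is the standard reading. One small point to tighten in part~(a), necessity: the $P\succ 0$ certifying robust exponential stability and the symmetric $P$ certifying robust strict dissipativity are a priori different, so you should either argue that the dissipativity $P$ is automatically positive definite when the closed loop is Hurwitz for all $H\in\mathbf{H}$ (this is the standard observability/Lyapunov argument in the cited references), or simply defer this detail to \cite{SchererLPV,SchererFromBook} as the paper does.
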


We will refer to the matrix $\begin{pmatrix} Q & S \\ S^\top & R \end{pmatrix}$ in Theorem~\ref{RobStability} and Theorem~\ref{PerformanceSProcedure} as the \emph{multiplier}, as commonly done in the literature. The following remarks are in order.

When \eqref{PerfLMI2} holds, the matrix $P$ defines the function $V(x)=x^\top P x$ as a storage function with respect to the supply \eqref{SupplyPerformance}.
The assumption $-R_P \succeq 0$ in (\emph{a}) is instrumental to infer well-posedness from \eqref{PerfLMI2}. Both assumptions $-R_P \succeq 0$ and $P \succ 0$ in (\emph{a}) are instrumental to infer robust stability from \eqref{PerfLMI2}. Also note that adding the condition $P\succ 0$ alone to (\emph{b}) does not necessarily imply robust stability.

Proofs of Theorem~\ref{RobStability} and Theorem~\ref{PerformanceSProcedure} are based on the full block S-procedure and can be directly found or deduced (statement (b) from Theorem~\ref{PerformanceSProcedure}) from \cite{SchererLPV, SchererFromBook}. 
The condition on compactness of $\mathbf{H}$ can be suitably relaxed, as shown recently in \cite{JokicNakic_Automatica}, but this is not used in this paper.



\section*{Appendix B}
\label{Appendix_B}

Consider interconnected system $G$ given by \eqref{Systems_53}-\eqref{Interconnection_53}  with $D_i=0$, $K_i=0$, $L_i=0$, $i=1,2$, and as presented in Figure~\ref{NeutralSupply}. With $x:=\col(x_1,x_2)$, $d:=\col(d_1,d_2)$, $z=\col(z_1,z_2)$, we have that $G$ is given by
\begin{equation}
\label{Gapp}
\dot{x}=Ax+Ed, \quad z=Fx,
\end{equation} 
and $E=\diag(E_1,E_2)$, $F=\diag(F_1,F_2)$, $A=\left(\begin{smallmatrix} A_1 & B_1 C_2 \\ B_2 C_1 & A_2 \end{smallmatrix}\right)$.
\begin{lemma}
\label{Lemma1}
Consider the interconnected system $G$ given by \eqref{Gapp} and suppose it is strictly dissipative with an additive quadratic supply function 
$
s_{\text{ext}}(d_1,d_2,z_1,z_2)=s_{1,ext}(d_1,z_1)+s_{2,ext}(d_2,z_2)
$
with an additive quadratic storage function $V(x)=x_1^\top P_1 x_1+x_2^\top P_2 x_2$. Let the system $G_{\alpha}$ be defined from $G$ when the matrix $A$ from \eqref{Gapp} is replaced by $A(\alpha):=\left(\begin{smallmatrix}A_1 & \alpha B_1 C_2 \\ \alpha B_2 C_1 & A_2\end{smallmatrix}\right)$
for $\alpha$ an arbitrary real constant such that $0 \leq \alpha \leq 1$. Then $G_{\alpha}$ is strictly dissipative with respect to the same additive supply function $s_{\text{ext}}(d_1,d_2,z_1,z_2)$ and with the same additive storage function $V(x)$.
\end{lemma}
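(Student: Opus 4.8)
The plan is to recast strict dissipativity of $G$ as a single quadratic-form inequality on the state/disturbance space and then exploit a sign-flip symmetry that isolates exactly the interconnection terms which get scaled by $\alpha$. With $P=\diag(P_1,P_2)$ and $z=Fx$, strict dissipativity of $G$ with storage $V(x)=x^\top P x$ and the additive supply $s_{\text{ext}}$ is, by the characterization recalled in Section~\ref{Sec2_B}, equivalent to
\begin{equation}
\Phi(x,d):=2x^\top P(Ax+Ed)-s_{\text{ext}}(d,Fx)<0 \quad \text{for all } \col(x,d)\neq 0. \nonumber
\end{equation}
Because $P$, $E$, $F$ are block diagonal and the supply is additive, I would split $\Phi$ into a block-local part $D(x,d)$, collecting the terms $2x_i^\top P_i A_i x_i$, $2x_i^\top P_i E_i d_i$ and $s_{i,\text{ext}}(d_i,F_ix_i)$ for $i=1,2$, and a coupling part $\Psi(x):=2x_1^\top P_1 B_1 C_2 x_2+2x_2^\top P_2 B_2 C_1 x_1$ coming from the off-diagonal blocks of $A$. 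Then $\Phi=D+\Psi$, and since $E$, $F$, $V$, $s_{\text{ext}}$ are untouched and only the off-diagonal blocks of $A$ are rescaled, the corresponding quadratic form for $G_\alpha$ is exactly $\Phi_\alpha=D+\alpha\Psi$.

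The crucial step is the linear involution $T:(x_1,x_2,d_1,d_2)\mapsto(x_1,-x_2,d_1,-d_2)$. I would check that $T$ leaves $D$ invariant while negating $\Psi$: every term of $D$ depends on the pair $(x_2,d_2)$ only through an even (quadratic or bilinear) monomial in those two variables, so it is unchanged by the joint sign flip, whereas each summand of $\Psi$ is linear in $x_2$ and hence changes sign. Since $T$ is a bijection of the state/disturbance space that preserves the set $\{\col(x,d)\neq 0\}$, the inequality $\Phi<0$ everywhere forces
\begin{equation}
\Phi(T(x,d))=D(x,d)-\Psi(x)<0 \quad \text{for all } \col(x,d)\neq 0 \nonumber
\end{equation}
as well. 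Thus $D+\Psi<0$ and $D-\Psi<0$ hold simultaneously on the whole punctured space.

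Finally, for $\alpha\in[0,1]$ I would write $\Phi_\alpha$ as the convex combination
\begin{equation}
\Phi_\alpha=D+\alpha\Psi=\tfrac{1+\alpha}{2}\,(D+\Psi)+\tfrac{1-\alpha}{2}\,(D-\Psi), \nonumber
\end{equation}
whose coefficients are nonnegative and sum to one. At every point $\col(x,d)\neq 0$ both summands are strictly negative, hence so is $\Phi_\alpha$; this is precisely the differential strict dissipativity condition for $G_\alpha$ (with system matrices $A(\alpha)$, $E$, $F$) with respect to the \emph{same} storage $V$ and the \emph{same} supply $s_{\text{ext}}$, which is the claim.

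The only real subtlety, and the step I expect to be the main obstacle, is recognizing and justifying the sign-flip symmetry $T$: the whole argument rests on the observation that the additive structure of $V$ and of $s_{\text{ext}}$, together with the block-diagonal $E$ and $F$, makes $D$ invariant under negating the second subsystem's signals, so that the interconnection strength $\alpha$ enters $\Phi_\alpha$ affinely through $\Psi$ alone. Once this is seen, no further computation beyond the elementary convex-combination identity is needed, and in particular no positivity or definiteness assumptions on the individual multipliers are required.
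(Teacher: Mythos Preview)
Your argument is correct and takes a genuinely different, more elementary route than the paper. The paper writes the strict dissipativity condition as the LMI \eqref{DisLMI}, applies a Schur complement to eliminate the disturbance variable and obtain $A^\top P+PA+Z\prec 0$ with a block-diagonal $Z$, and then applies a second Schur complement on the $2\times 2$ block structure of $A(\alpha)^\top P+PA(\alpha)+Z$ to reduce the question to an inequality of the form $A_1^\top P_1+P_1A_1+Z_1-\alpha^2\Lambda\prec 0$ with $\Lambda\preceq 0$; monotonicity in $\alpha^2$ then gives the claim. Your sign-flip involution $T$ achieves the same decoupling without any Schur complements: the symmetry $D\circ T=D$, $\Psi\circ T=-\Psi$ is precisely what the block-diagonal structure of $P$, $E$, $F$ and the additivity of $s_{\text{ext}}$ encode, and the convex-combination identity replaces the monotonicity-in-$\alpha^2$ step. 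Your approach is shorter, requires no invertibility of intermediate blocks, and in fact yields the conclusion for all $\alpha\in[-1,1]$; the paper's approach, on the other hand, stays entirely within the standard LMI/Schur-complement toolbox that the rest of Section~\ref{Sec6} uses, which makes it stylistically consistent with the surrounding proofs.
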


\begin{proof}Let $s_{ext}(d,z):=\left(\begin{smallmatrix} d \\ z \end{smallmatrix}\right)^\top 
\left(\begin{smallmatrix} -Q^P & -S^P \\ -(S^P)^\top & -R^P \end{smallmatrix}\right)\left(\begin{smallmatrix} d \\ z \end{smallmatrix}\right)$. Strict dissipativity of $G$ with respect to $s_{ext}(d,z)$ and with storage function $V(x)=x^\top P x$ implies that the following matrix inequality holds

\begin{equation}
\label{DissLemma}
\begin{pmatrix}\star\end{pmatrix}^\top
\begin{pmatrix}0 & P \\ P & 0\end{pmatrix}
\begin{pmatrix}I & 0 \\ A & E\end{pmatrix}+
\begin{pmatrix}\star\end{pmatrix}^\top
\begin{pmatrix}Q^P & S^P \\ (S^P)^\top & R^P\end{pmatrix}
\begin{pmatrix}0 & I \\ F & 0\end{pmatrix} \prec 0.
\end{equation}

Note that the hypothesis that both supply and storage functions are additive implies that the matrices $P$, $Q^P$, $S^P$ and $R^P$ are block diagonal ($P:=\diag(P_1,P_2)$). The dissipativity condition \eqref{DissLemma}, after multiplications, reads as
$\left(\begin{smallmatrix}
A^\top P + P A + F^\top R^P F & P E + F^\top (S^P)^\top \\
E^\top P + S^P F & Q^P 
\end{smallmatrix}\right) \prec 0.$
Applying the Schur complement rule on the above inequality we obtain $A^\top P + P A + Z \prec 0$, where $Z=F^\top R^P F - (P E + F^\top (S^P)^\top) (Q^P)^{-1}(E^\top P + S^P F)$.
Note that $Z$ is by construction block diagonal matrix, i.e., we have $Z=\diag(Z_1,Z_2)$.
We have
\[
A(\alpha)^\top P + P A(\alpha)=\begin{pmatrix} A_1^\top P_1 + P_1 A_1 & \alpha (C_1^\top B_2^\top P_2 + P_1 B_1 C_2) 
\\ \star & A_2^\top P_2 + P_2 A_2 \end{pmatrix}.
\] 
Next we show that $A(\alpha)^\top P + P A(\alpha) + Z \prec 0$ for $\alpha =1$ implies that this inequality holds also for all $\alpha \in [0,1)$. 
After applying the Schur complement rule on the inequality $A(\alpha)^\top P + P A(\alpha) + Z \prec 0$, we have 
$A_1^\top P_1 + P_1 A_1 + Z_1 - \alpha^2 \Lambda \prec 0$ with
$\Lambda=(\star)^\top(A_2^\top P_2 + P_2 A_2 + Z_2)^{-1}(C_2^\top B_1^\top P_1 + P_2 B_2 C_1)$. Therefore $A_1^\top P_1 + P_1 A_1 + Z_1 \prec \alpha^2 \Lambda \preceq 0$.
Since $\Lambda \prec 0$ and the above inequality holds for $\alpha =1$, it also holds for any  $\alpha \in [0,1)$.
\end{proof}
\bibliography{references}
\bibliographystyle{plain}


\end{document}